\numberwithin{equation}{section}
\newcommand{\MH}{\mathrm{Mult}(\mathcal{H})}
\newcommand{\HH}{\mathcal{H}}
\newcommand{\HK}{\mathcal{K}}
\newcommand{\HM}{\mathcal{M}}
\newcommand{\HC}{\mathcal{C}}
\newcommand{\dB}{\partial \mathbb B_d}
\newcommand{\D}{\mathbb{D}}
\newcommand{\B}{\mathbb{B}}
\newcommand{\C}{\mathbb{C}}
\newcommand{\N}{\mathbb{N}}
\newcommand{\R}{\mathbb{R}}
\newcommand{\binoN}{\left( \begin{matrix} N\\ \alpha \end{matrix}\right)}
\newcommand{\la}{\langle}
\newcommand{\ra}{\rangle}
\newcommand{\Bd}{\mathbb B_d}
\DeclareMathOperator{\dist}{dist}
\newcommand{\Hol}{\operatorname{Hol}}
\newcommand{\Mult}{\mathrm{Mult}}
\newcommand{\clos}{\operatorname{clos}}
\def\om{\omega}
\def\Bd{{\mathbb{B}_d}}
\newcommand{\Ker}[1]{\mathsf{Ker}~}
\DeclareMathOperator{\stable}{stable}
\theoremstyle{plain}
\newtheorem{theorem}{Theorem}[section]
\newtheorem{lemma}[theorem]{Lemma}
\newtheorem{prop}[theorem]{Proposition}
\newtheorem{corollary}[theorem]{Corollary}
\newtheorem{ques}[theorem]{Question}
\theoremstyle{definition}
\newtheorem{example}[theorem]{Example}
\begin{document}

\date{\today}

\bibliographystyle{plain}
\title[Cyclicity in the Drury-Arveson space]{Cyclicity in the Drury-Arveson space and other weighted Besov spaces}
\author[A. Aleman]{Alexandru Aleman}
\address{Lund University, Mathematics, Faculty of Science, P.O. Box 118, S-221 00 Lund, Sweden}
\email{alexandru.aleman@math.lu.se}

\author[K.-M. Perfekt]{Karl-Mikael Perfekt}
\address{Department of Mathematical Sciences, Norwegian University of Science and Technology (NTNU), 7491 Trondheim, Norway}
\email{karl-mikael.perfekt@ntnu.no}

\author[S. Richter]{Stefan Richter}
\address{Department of Mathematics, University of Tennessee, 1403 Circle Drive, Knoxville, TN 37996-1320, USA}
\email{srichter@utk.edu}

\author[C. Sundberg]{Carl Sundberg}
\address{Department of Mathematics, University of Tennessee, 1403 Circle Drive, Knoxville, TN 37996-1320, USA}
\email{csundber@utk.edu}

\author[J. Sunkes]{James Sunkes}
\address{Huntsville, AL, USA}
\email{jsunkes@gmail.com}

\subjclass[2020]{Primary 47B32, 47A16; Secondary  30H25}

\begin{abstract} Let $\HH$ be a space of analytic functions on the unit ball  $\Bd$  in $\C^d$ with multiplier algebra $\mathrm{Mult}(\HH)$. A function $f\in \HH$ is called cyclic if the set $[f]$, the closure of $\{\varphi f: \varphi \in \mathrm{Mult}(\HH)\}$, equals $\HH$. For multipliers we also consider a weakened form of the cyclicity concept. Namely  for $n\in \N_0$ we consider the classes $$\HC_n(\HH)=\{\varphi \in \MH:\varphi\ne 0, [\varphi^n]=[\varphi^{n+1}]\}.$$  Many of our results hold for $N$:th order radially weighted Besov spaces on $\Bd$, $\HH= B^N_\om$, but we describe our results only for the  Drury-Arveson space $H^2_d$ here.
	
Letting $\C_{\stable}[z]$ denote the stable polynomials for $\Bd$, i.e. the $d$-variable complex polynomials without zeros in $\Bd$, we show that
\begin{align*} &\text{ if } d \text{ is odd, then } \C_{\stable}[z]\subseteq \HC_{\frac{d-1}{2}}(H^2_d), \text{ and }\\
&\text{ if } d \text{ is even, then } \C_{\stable}[z]\subseteq \HC_{\frac{d}{2}-1}(H^2_d).\end{align*}
For $d=2$ and $d=4$ these inclusions are the best possible, but in general we can only show that if $0\le n\le \frac{d}{4}-1$, then $\C_{\stable}[z]\nsubseteq \HC_n(H^2_d)$.

For functions other than polynomials we show that if $f,g\in H^2_d$ such that $f/g\in H^\infty$ and $f$ is cyclic, then $g$ is cyclic. We use this to prove that if $f,g\in H^2_d$ extend to be analytic in a neighborhood of $\overline{\Bd}$, have no zeros in $\Bd$, and their zero sets coincide on the boundary, $Z(f)\cap \partial \Bd = Z(g)\cap \dB$, then $f$ is cyclic if and only if $g$ is cyclic. Furthermore, if  for  $f\in H^2_d\cap C(\overline{\Bd})$
the set $Z(f)\cap \dB$ embeds a cube of real dimension $\ge 3$, then $f$ is not cyclic in the Drury-Arveson space. On the other hand, stable polynomials with a finite zero set in $\partial \Bd$ are cyclic.
\end{abstract}

  \maketitle

\section{Introduction}
Investigations about cyclic vectors of spaces of single variable analytic functions can be considered to be classical. Beurling \cite{Beurling} showed that the cyclic vectors of the Hardy space $H^2$ are the outer functions, that is, those $f\in H^2$ such that
$$-\infty < \log|f(0)|= \int_0^{2\pi} \log|f(e^{it})| \frac{dt}{2\pi}.$$ Korenblum \cite{KorenblumH2n, KorenblumBeurling} established complete results for the topological algebra $A^{-\infty}=\{f\in \Hol(\D): |f(z)|=O((1-|z|)^{-n}) \text{ for some }n\in \N\}$, and for the weighted Dirichlet spaces $\{f\in \Hol(\D): f^{(n)}\in H^2\}$. Complete characterizations of the cyclic functions in the Bergman space $L^2_a=\{f\in \Hol(\D): \int_\D |f|^2dA<\infty\}$ and Dirichlet space $D=\{f\in \Hol(\D): f'\in L^2_a\}$  are lacking, but the area is rich with deep results that have clarified the function theory for these spaces, see for example \cite{DurenSchuster, ElFallahKellayMashreghiRansfordPrimer, HedKorZhu}.

Much less is known in the setting of spaces of functions of several complex variables. Norm closed ideals in the ball and polydisc algebras have been investigated by Hedenmalm, \cite{HedenmalmBall, HedenmalmBidisc}, and there are results for the analogous questions in the Drury-Arveson context by Clou\^{a}tre and Davidson, \cite{ClDavidsonIdeals}. Cyclicity of polynomials in weighted Dirichlet spaces of the bidisc has been investigated by B\'en\'eteau-Condori-Liaw-Seco-Sola \cite{BenetCondoriEtAl}, B\'en\'eteau-Knese-Kosi\'nski-Liaw-Seco-Sola \cite{BenetKneseKosLiaw}, and Knese-Kosi\'nski-Ransford-Sola \cite{KneseKosinskiRansfordSola}. The paper \cite{GuoZhou} by Guo-Zhou contains results specifically for the Hardy space of the bidisc, while the paper \cite{LinusBergq} by Bergqvist treats the polydisc. In \cite{AlanSola} Sola extended known bidisc results to the unit ball of two complex variables. Some cyclicity results for the Drury-Arveson space $H^2_d$ can be found in \cite[Theorems~1.4 and 1.5]{RiSunkesHankel}. We refer the reader to \cite{HartzDruryArv} for a general introduction to the Drury-Arveson space $H^2_d$.

The purpose of this paper is to study the properties of both cyclic and non-cyclic functions $f\in H^2_d$ which have no zeros in $\Bd$.  Since $H^2_d$ is central to multivariable operator theory connected with $\Bd$, we expect our results to have significance in that context.  The Drury-Arveson space is known to be an example of two types of important classes of function spaces: it is a Hilbert function space with a complete Pick kernel, and it is a radially weighted Besov space on the unit ball of $\C^d$. Many of our methods apply in this generality. Thus, our presentation will be very general. In the special case of the Dirichlet space of the unit disc our approach provides a new proof of Corollary 5.5 of \cite{RiSuMMJ}.

 It is known that for complete Pick spaces there is a 1-1 correspondence between multiplier invariant subspaces of $\HH$ and weak* closed ideals of $\MH$, see \cite{DavidsonRamseyShalit}. As our main interest is in $H^2_d$ we have stated our results as results about invariant subspaces of $B^N_\om$, even though many of them are also results about weak* closed ideals in $\Mult(B^N_\om)$.

Before stating and discussing our main results in Section~\ref{sec:main}, we will present necessary background material on radially weighted Besov spaces and their multipliers in Section~\ref{sec:prelim}. In this preliminary section we also introduce the classes $\HC_n(\HH)$ of pseudo-cyclic multipliers. In Section~\ref{sec:ex} we illustrate our results through a number of examples. In Section~\ref{sec:besov} we prove the theorems which apply for general radially weighted Besov spaces, while we treat those spaces which are also complete Pick spaces in Section~\ref{SecPickSpaces}. Finally, we consider some natural open questions in Section~\ref{sec:questions}.

\section{Preliminaries} \label{sec:prelim}
Let $X$ be a set. A Hilbert function space $\HH$ on $X$ is a Hilbert space of complex valued functions on $X$ such that for each $z\in X$ the evaluation functional $f\to f(z)$ is continuous on $\HH$.

Let $d\in \N$ and let $\Bd$  denote the open unit ball of $\C^d$. We will use $\Hol(\Bd)$ to denote the analytic functions on $\Bd$, and we will write $\D=\mathbb{B}_1$, when we want to emphasize that $d=1$. The main focus of this paper will be on radially weighted Besov spaces $$B^N_\om=\{f\in \Hol(\Bd): R^Nf\in L^2(\om)\}.$$ Here $N$ is a non-negative integer, $R=\sum_{j=1}^d z_j \frac{\partial}{\partial z_j}$ is the radial derivative operator, and $\om$ is an \textit{admissible radial measure} on $\overline{\Bd}$. That is, $\om$ is of the type $d\om(z)=d\mu(r) d\sigma(w)$, where $z=rw$, $\sigma$ is the normalized rotationally invariant measure on $\dB$, and $\mu$ is a Borel measure on $[0,1]$ with $\mu((r,1])>0$ for each real $r$ with  $0<r<1$. If $\mu$ has a point mass at 1, then the $L^2(\om)$-norm of an analytic function $f$ is to be understood by $\|f\|^2_{L^2(\om)}=\int_{\Bd}|f|^2d\om + \mu(\{1\})\|f\|^2_{H^2(\dB)}$. We define a norm on $B^N_\om$ by
\begin{align}\label{NormBN} \|f\|^2_{B^N_\om}= \left\{\begin{matrix}& \|f\|^2_{L^2(\om)}, &\text{ if }N=0,\\& \om(\Bd)|f(0)|^2+\|R^N f\|^2_{L^2(\om)}, &\text{ if } N>0,\end{matrix}\right.\end{align}
 and we note that the hypothesis on $\mu$ implies that  each $B^N_\om$ is a Hilbert function space on $\Bd$. For later reference we also note that
 \begin{align}\label{NormNandN-1}\|f\|^2_{B^N_\om}=\om(\Bd)|f(0)|^2+\|Rf\|^2_{B^{N-1}_\om}\end{align} holds for all $N>0$.

An important example is the Drury-Arveson space $H^2_d$. It can be defined as the space of analytic functions $f$ in $\Bd$ such that $$\|f\|^2_{H^2_d} = \sum_{\alpha\in \N_0^d} \frac{\alpha !}{|\alpha|!}|\hat{f}(\alpha)|^2<\infty,$$
where $f$ is given by the power series $f(z)=\sum_{\alpha\in \N_0^d} \hat{f}(\alpha)z^\alpha$ in multinomial notation. One calculates $\|f_n\|^2_{H^2(\dB)}=\frac{n! (d-1)!}{(n+d-1)!}\|f_n\|^2_{H^2_d}$, whenever $f_n$ is a homogeneous polynomial of degree $n$, see e.g. \cite[Section~2]{RiSunkesHankel}. For an arbitrary radially weighted Besov space we therefore have that
\begin{align}\label{sumNorm}\|f\|^2_{B^N_\om}=\om(\Bd) |f(0)|^2 + \sum_{n=1}^\infty n^{2N}\om_n\|f_n\|^2_{H^2_d},\end{align}  where $f=\sum_{n=0}^\infty f_n$ is the representation of $f$ as a sum of homogeneous polynomials of degree $n$, and $\om_n= \frac{n! (d-1)!}{(n+d-1)!}\int_{[0,1]}r^{2n}d\mu(r)$. In particular, the Drury-Arveson space is itself a radially weighted Besov space, up to norm equivalence. In fact,
\begin{align}\label{DruryArveson} H^2_d=\left\{\begin{matrix} B^{(d-1)/2}_\om \ \text{ if } d \text{ is odd and  } \om=\sigma\\B^{d/2}_\om  \ \text{ if } d \text{ is even and } \om=V.\end{matrix}\right.\end{align}
Here $V$ denotes normalized Lebesgue measure on $\Bd$.

The Drury-Arveson space is part of a one-parameter group of spaces of analytic functions. For $\alpha\in \R$ and $f\in \Hol(\Bd)$ define $$\|f\|^2_{D_\alpha(\Bd)}=\sum_{n=0}^\infty (n+1)^\alpha \|f_n\|^2_{H^2_d}\approx \sum_{n=0}^\infty (n+1)^{\alpha+d-1}\int_{\dB}|f_n(z)|^2 d\sigma(z),$$ where $f=\sum_{n=0}^\infty f_n$ is the expansion of $f$ into homogeneous polynomials of degree $n$, and let $D_\alpha(\Bd)=\{f\in \Hol(\Bd): \|f\|^2_{D_\alpha(\Bd)}<\infty\}$. Then $D_0(\Bd)=H^2_d$, $D_{-d+1}(\Bd)=H^2(\dB)$ is the Hardy space, $D_{-d}(\Bd)=L^2_a(\Bd)$ is the Bergman space, and $D=D_1(\D)$ is the Dirichlet space.  Here, and throughout, the equality of spaces is to be understood to include the possibility that the norms are not equal, but equivalent.

More generally, for $\alpha<-d+1$ we have $$(n+1)^{\alpha+d-1}\approx \int_0^1 r^n (1-r)^{-(\alpha+d)}dr.$$ Thus, by setting $$d\om_\alpha(w) =\left\{\begin{matrix} d\sigma(w)\ \ \ \ \ &\text{ if } \alpha=-d+1\\
(1- r)^{-(\alpha+d)}dr d\sigma(z)&\text{ if } w=rz \text{ and }\alpha<-d+1 \end{matrix}\right.$$ we see that $B^0_{\om_\alpha}= D_\alpha(\Bd)$, whenever $\alpha\le -d+1$. This implies that $D_\alpha(\Bd)=B^N_{\om_{\alpha-2N}}$, whenever $N\in \N_0$ with $N\ge \frac{\alpha+d-1}{2}$, cf. \eqref{sumNorm}. If $\alpha >1$, then a simple argument with the Cauchy-Schwarz inequality implies that the spaces $D_\alpha(\Bd)$ are contained in the ball algebra with $\|f\|_\infty \le C\|f\|_{D_\alpha(\Bd)}$, see \cite{Shields} for the case $d=1$. Furthermore, since $f\in D_\alpha(\Bd) \Leftrightarrow Rf\in D_{\alpha-2}(\Bd)$, we conclude that evaluation of $f, Rf, \dots, R^{N-1}f$ at points $z\in \dB$ defines bounded linear functionals on $D_\alpha(\Bd)$ whenever $\alpha > 2N-1$. For more information about these spaces and their multipliers see e.g. \cite{AHMR_RadiallyWeightedBesov, ARS_CarlesonM, BrSh1984, CascanteOrtega2011CarlesonM, CSW_Corona, OrtegaFabrega2006Mult, OrtegaFabrega2000, RiSunkesHankel}, and  Section 14 of \cite{AlHaMcRiFreeOuter}.

If $\HH$ and $\HK$ are Hilbert function  spaces, then $$\mathrm{Mult}(\HH,\HK)=\{\varphi: \varphi f \in \HK \text{ for all }f\in \HH\}$$ are the multipliers from $\HH$ to $\HK$. $\mathrm{Mult}(\HH,\HK)$ is a Banach space with norm $\|\varphi\|_{\mathrm{Mult}(\HH,\HK)}=\sup\{\|\varphi f\|_{\HK}: f\in \HH, \|f\|_{\HH}\le 1\}$. We will write $\mathrm{Mult}(\HH)=\mathrm{Mult}(\HH,\HH)$, and we note that
$$\|\varphi\|_\infty \le \|\varphi\|_{\mathrm{Mult}(B^N_\om)}$$ with equality whenever $N=0$. Furthermore, one  checks that if a function $f$ extends to be analytic in a neighborhood of $\overline{\Bd}$, then $f\in \Mult(B^N_\om)$ for all admissible radial measures $\om$ and all $N\in \N_0$.

We are interested in multiplier invariant subspaces of $\HH$, i.e. those subspaces $\HM \subseteq \HH$ that satisfy $\varphi f \in \HM$, whenever $f\in \HM$ and $\varphi\in \mathrm{Mult}(\HH)$. If $f\in \HH$, then $[f]=\clos_{\HH}\{\varphi f: \varphi \in \mathrm{Mult}(\HH)\}$ denotes the invariant subspace generated by $f$.
A function  $f\in \HH$ is called cyclic in $\HH$ if $[f]=\HH$. If $\HH$ is a radially weighted Besov space, then all polynomials are multipliers, and they are densely contained in $\HH$. Thus, in this case, $f$ is cyclic if and only if $1\in [f]$.

If $\HH=H^2(\D)$, then the cyclic functions are the classical outer functions. If $f\in \HH=H^2_d$, then each slice function $f_z(\lambda)=f(\lambda z)$, $z\in \dB$, is in $H^2(\D)$ with $\|f_z\|_{H^2(\D)}\le \|f\|_{H^2_d}$; see Section \ref{Sec:PickSlices} for a generalization to other spaces.  It follows that if $f$ is cyclic in $H^2_d$, then each slice function $f_z$ must be outer in $H^2(\D)$. However, we will see in Section~\ref{sec:ex} that for $d\ge 2$, there are noncyclic functions $0\ne f\in H^2_d$ such that every slice is outer.

As a tool to investigate the cyclic behaviour of functions in $\HH$, we define the following sets of multipliers, each indexed by an  integer $n\ge 0$:
$$\HC_n(\HH)=\{\varphi\in \MH: \varphi\ne 0 \text{ and } [\varphi^n]=[\varphi^{n+1}]\}.$$
We also define
 $$\HC_\infty(\HH)=\{\varphi\in \MH:\bigcap_{n=1}^\infty [\varphi^n] \ne (0)\}.$$
We consider membership in $\HC_n(\HH)$ to be a weakened form of cyclicity. Indeed, if the multipliers are dense in $\HH$, then $\HC_0(\HH)$ consists of the cyclic multipliers and it is easy to prove that  \begin{align}\label{CnInclusions}\HC_0(\HH)\subseteq \HC_1(\HH) \subseteq \HC_2(\HH) \dots \subseteq \HC_\infty(\HH).
\end{align}

In the case $\HH=H^2(\D)$ one has equality throughout, each set equaling the outer functions in $H^\infty(\D)$, as can be seen from the inner-outer factorization. Similarly, if $\alpha \ge 0$, then we show in Section \ref{Sec:PickSlices} that if $ f\in \HC_n(D_\alpha(\Bd))$ for some $n\in \N$, then for each $z\in \dB$, the slice function $f_z$ is an outer function in $H^\infty(\D)$. One easily checks that for $d=1$ the same conclusion holds for $f\in \HC_\infty(D_\alpha(\Bd))$. However, in Proposition~\ref{SliceFunctionExample} we will present an example to show that if $d\ge 2$, then functions in $\HC_\infty(H^2_d)$ may have slices with non-trivial singular inner factors. It follows that $ \bigcup_{n=0}^\infty \HC_n(H^2_d) \subsetneqq \HC_\infty(H^2_d)$.

For the Dirichlet space $D=D_1(\D)$ it was shown in \cite[Theorem~4.3]{RiSuJOT} that the class $\HC_1(D)$ equals  the outer functions in $\Mult(D)$. On the other hand, it is known \cite{BrSh1984} that there are non-cyclic outer functions in $\Mult(D)$, and therefore $\HC_0(D) \ne \HC_1(D)$. In this particular case we have that  $\HC_1(D)=\HC_\infty(D)$, but nothing like this is true in general.

\begin{example}\label{C2-C1} If $\HH=D_4(\D) = B^2_\om$, $d\om= d\delta_1 \frac{|dz|}{2\pi}$, then $\varphi(z)=1-z$ is an example of an outer function in $\HC_2(B^2_\om)\setminus \HC_1(B^2_\om)$. \end{example}
This holds by observing that the evaluation of functions and their derivatives at $z=1$ define bounded linear functionals on $D_4(\D)$. Hence $1-z \notin [(1-z)^2]$. It is elementary to show that $1-z\in \HC_2(\HH)$, cf. Theorem \ref{PolyCn}.

We finish this section by proving a few more elementary properties of the sets $\HC_\infty(\HH)$.

\begin{lemma}\label{easyMultFree} Assume that $\mathrm{Mult}(\HH) \subseteq \HH$. Then
	
	(a) If $ \varphi\in \HC_\infty(\HH)$, then $\varphi(z) \ne 0$ for all $z\in \Bd$.
	
	(b) If $n,m\in \N_0$ and if $\psi, \varphi \in \mathrm{Mult}(\HH)$ such that $[\varphi^n ]=[\varphi^{n+1}]$ and $[\psi^m]=[\psi^{m+1}]$, then $[\varphi^n \psi^m]=[\varphi^{n+1}\psi^{m+1}]$.
\end{lemma}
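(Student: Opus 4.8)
The plan is to establish part~(a) by contradiction, exploiting the continuous inclusion of $\HH$ into $\Hol(\Bd)$, and to deduce part~(b) from a general identity relating $[\theta g]$ to $\clos(\theta [g])$.

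For (a), suppose toward a contradiction that $\varphi(z_0)=0$ for some $z_0\in\Bd$. Since $\HH$ is a Hilbert function space of analytic functions on $\Bd$, it is continuously included in $\Hol(\Bd)$ (a norm bounded set in $\HH$ is locally uniformly bounded, because $z\mapsto k(z,z)$ is continuous, hence a normal family), so a sequence converging in $\HH$ converges uniformly on compact subsets of $\Bd$ to its $\HH$-limit; in particular all partial derivatives converge pointwise. Every generator $\psi\varphi^n$ of $[\varphi^n]$, with $\psi\in\Mult(\HH)$, vanishes to order at least $n$ at $z_0$ since $\varphi^n$ does, so by the previous remark every function in $[\varphi^n]$ vanishes to order at least $n$ at $z_0$. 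Hence any $f\in\bigcap_{n=1}^\infty[\varphi^n]$ vanishes to infinite order at $z_0$, and as $\Bd$ is connected, $f\equiv 0$, contradicting $\varphi\in\HC_\infty(\HH)$.

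For (b), the key observation is that $[\theta g]=\clos\!\bigl(\theta\,[g]\bigr)$ for every $\theta\in\Mult(\HH)$ and every $g\in\HH$. Indeed, each generator $\psi\theta g$ of $[\theta g]$ equals $\theta(\psi g)\in\theta[g]$, so $[\theta g]\subseteq\clos(\theta[g])$; conversely, multiplication by $\theta$ is bounded on $\HH$, so $\theta[g]\subseteq[\theta g]$, and since $[\theta g]$ is closed, $\clos(\theta[g])\subseteq[\theta g]$. Consequently $[g]=[h]$ implies $[\theta g]=[\theta h]$ for all $\theta\in\Mult(\HH)$. Applying this with $g=\varphi^n$, $h=\varphi^{n+1}$, $\theta=\psi^m$ gives $[\varphi^n\psi^m]=[\varphi^{n+1}\psi^m]$, and with $g=\psi^m$, $h=\psi^{m+1}$, $\theta=\varphi^{n+1}$ gives $[\varphi^{n+1}\psi^m]=[\varphi^{n+1}\psi^{m+1}]$; combining the two yields $[\varphi^n\psi^m]=[\varphi^{n+1}\psi^{m+1}]$.

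Neither part presents a real difficulty; the points deserving some care are the normal-families argument in (a), which promotes the boundedness of point evaluations to pointwise convergence of all derivatives on compacta, so that the order of vanishing at $z_0$ is preserved under closure, and, in (b), checking that $\clos(\theta[g])$ equals $[\theta g]$ rather than being merely contained in it.
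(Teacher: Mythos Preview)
Your proof is correct. Part~(b) is essentially identical to the paper's argument, just repackaged: where the paper directly picks sequences $u_j\varphi^{n+1}\to\varphi^n$ and multiplies through by $\psi^{m+1}$, you abstract this into the identity $[\theta g]=\clos(\theta[g])$ and the implication $[g]=[h]\Rightarrow[\theta g]=[\theta h]$. The content is the same.

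Part~(a) differs genuinely from the paper and is in fact more direct. The paper argues by slicing: given $\varphi(z_0)=0$ and $f\in\bigcap_n[\varphi^n]$, it fixes a small ball $B$ around $z_0$, takes $z\in B$ with $\varphi(z)\ne 0$, restricts to the complex line through $z_0$ and $z$, and uses that the one-variable function $\varphi_z$ has a zero of some order $k\ge 1$ at the origin, forcing $f_z$ to vanish to order $nk$ there for every $n$. Your argument avoids slices entirely: you observe that $\varphi^n$ (hence every $\psi\varphi^n$) vanishes to order at least $n$ at $z_0$, and that this order of vanishing survives passage to the $\HH$-closure because norm convergence implies local uniform convergence and hence convergence of all partial derivatives. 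The uniform-boundedness step you invoke (that $z\mapsto\|k_z\|$ is locally bounded, via the uniform boundedness principle applied to the pointwise bounded family of evaluation functionals) is standard and correct. Your route is shorter; the paper's slice argument has the mild advantage of reducing to the familiar single-variable order-of-zero picture, but at the cost of extra bookkeeping.
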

\begin{proof} (a) Let $\varphi \in \MH$ and $z_0\in \Bd$ such that $\varphi(z_0)=0$. We need to show that $\bigcap_{n=1}^\infty [\varphi^n]=(0)$, so let $f \in [\varphi^n]$ for each $n\in \N$. Let $r>0$ such that the closure of $B=\{z: |z-z_0|< r\}$ is contained in $\Bd$. It will be sufficient to show that $f(z)=0$ for each $z\in B$. It is clear that $f(z)=0$ at every point $z$ where $\varphi(z)=0$. Let $z\in B$ with $\varphi(z)\ne 0$. For $\lambda\in \C$ set $\psi_z(\lambda)=z_0+\lambda(z-z_0)$. Then $\varphi_z=\varphi\circ \psi_z$ and $f_z=f\circ \psi_z$ are analytic in a neighborhood of the unit disc. Since $f(z)=f_z(1)$ it suffices to show that $f_z=0$.
	
	We have $\varphi(z_0)=0$, but $\varphi(z)\ne 0$. Thus, the function $\varphi_z$ satisfies $\varphi_z(0)=0$, but $\varphi_z$ is not indentically 0. Hence there is $k\ge 1$ and an analytic function $h$ such that $h(0)\ne 0$ and $\varphi_z(\lambda)=\lambda^kh(\lambda)$. Now fix $n \in \N$. Then $f\in [\varphi^n]$ implies that there are multipliers $g_j$ such that $g_j\varphi^n \to f$ in $\HH$. Then $g_j \circ \psi_z \varphi_z^n \to f_z$ uniformly on the closed unit disc. Since each $g_j\varphi^n$ has a zero of multiplicity $nk$ at 0, we conclude that $f_z$ has a zero of multiplicity $nk$ at 0. Since $k\ge 1$ and $n$ is arbitrary, this implies that $f_z=0$.

	(b) Suppose that $n,m, \varphi, \psi$ are as in the hypothesis. Since $\varphi^{n+1}\psi^{m+1}= (\varphi \psi) \varphi^{n}\psi^{m}\in [\varphi^{n}\psi^{m}]$ we only have to show that $\varphi^{n}\psi^{m}\in [\varphi^{n+1}\psi^{m+1}]$. The hypothesis for $\varphi$ implies that there is a sequence of multipliers $u_j$ such that $u_j \varphi^{n+1} \to \varphi^n$ in $\HH$. Then $u_j \varphi^{n+1}\psi^{m+1} \to \varphi^n \psi^{m+1}$ in $\HH$. This implies $\varphi^n \psi^{m+1}\in [\varphi^{n+1}\psi^{m+1}]$. Similarly, there is a sequence of multipliers $v_j$ such that $v_j\psi^{m+1}\to \psi^m$ in $\HH$. Then $\varphi^n v_j\psi^{m+1}\to \varphi^n\psi^m$ and hence $\varphi^n\psi^m \in [\varphi^n \psi^{m+1}]\subseteq [\varphi^{n+1}\psi^{m+1}]$.
\end{proof}

In particular, if $\mathrm{Mult}(\HH)$ is densely contained in $\HH$, then multiplication by cyclic functions preserves each of the classes $\HC_n(\HH)$.

\section{Statements of the main results} \label{sec:main}

In this section we present our main results. Throughout we will suppose that $N\in \N$ and let $\om$ denote an admissible radial measure.

\begin{theorem} \label{AbsoluteValue} If $\varphi, \psi \in \mathrm{Mult}(B^N_\om)$ with $\varphi/\psi \in H^\infty(\Bd)$, then for each $k\in \N$ we have $\varphi^{N+k-1}\in [\psi^k]\subseteq [\psi]$.
\end{theorem}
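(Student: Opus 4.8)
The plan is to reduce the assertion to an approximation in $B^N_\om$ and then to build the approximants by dilating the bounded holomorphic function $\varphi/\psi$. Since $1\in B^N_\om$ and the polynomials are multipliers which are dense in $B^N_\om$, boundedness of $M_{\psi^k}$ on $B^N_\om$ gives $[\psi^k]=\clos_{B^N_\om}\{\psi^k f:f\in B^N_\om\}$, and $\varphi^{N+k-1}\in\Mult(B^N_\om)\subseteq B^N_\om$. Put $b:=\varphi/\psi$, which is bounded and holomorphic on $\Bd$ by hypothesis, and set $Q:=b^{\,N+k-1}\psi^{\,N-1}$, a bounded holomorphic function on $\Bd$ with $\psi^kQ=(b\psi)^{N+k-1}=\varphi^{N+k-1}$. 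For $0<r<1$ let $Q_r(z)=Q(rz)$; this is holomorphic in a neighbourhood of $\overline{\Bd}$, so $Q_r\in\Mult(B^N_\om)$ and hence $\psi^kQ_r\in\psi^k\,\Mult(B^N_\om)\subseteq[\psi^k]$. Since also $[\psi^k]\subseteq[\psi]$ trivially (as $\psi^k\in\psi\,\Mult(B^N_\om)$), the theorem follows once one proves
$$\psi^kQ_r\longrightarrow\varphi^{N+k-1}\qquad\text{in }B^N_\om\text{ as }r\to1^-.$$

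To establish this I would use the norm formula \eqref{NormBN}, reducing to $R^N(\psi^kQ_r)\to R^N(\psi^kQ)$ in $L^2(\om)$, the values at $0$ obviously converging. Since $R$ is a derivation and commutes with dilation, $R^j(Q_r)=(R^jQ)_r$, so
$$R^N(\psi^kQ_r)-R^N(\psi^kQ)=\sum_{j=0}^N\binom Nj R^{N-j}(\psi^k)\bigl[(R^jQ)_r-R^jQ\bigr].$$
The $j=0$ term is $(Q_r-Q)R^N(\psi^k)$, which tends to $0$ in $L^2(\om)$ by dominated convergence, since $R^N(\psi^k)\in L^2(\om)$ (because $\psi^k\in\Mult(B^N_\om)\subseteq B^N_\om$) and $Q_r\to Q$ boundedly and pointwise. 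For $1\le j\le N$ the first point is that $R^{N-j}(\psi^k)\,R^jQ\in L^2(\om)$: $R^jQ$ alone need not be integrable, since differentiating $b=\varphi/\psi$ produces terms that blow up near $\partial\Bd$ and have poles along the zero set of $\psi$, but after differentiating $Q=b^{\,N+k-1}\psi^{\,N-1}$ by the product rule and multiplying by $R^{N-j}(\psi^k)$ each summand is — up to a bounded factor — a product of radial derivatives of $\varphi$ and $\psi$ of total order $N$ times a function in $B^N_\om$, the powers of $\psi$ surviving in $R^{N-j}(\psi^k)$ being exactly enough to absorb the poles created by the derivatives of $b$; keeping track of this bookkeeping is what makes the exponent $N+k-1$ appear. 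Such products lie in $B^0_\om=L^2(\om)$ by the standard multiplier and Carleson-measure estimates for radially weighted Besov spaces (in particular the inclusion $\Mult(B^N_\om)\subseteq\Mult(B^{N-1}_\om)$) available in the references of Section~\ref{sec:prelim}.

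The step I expect to be the real obstacle is passing to the limit in the terms with $1\le j\le N$, i.e. proving $R^{N-j}(\psi^k)(R^jQ)_r\to R^{N-j}(\psi^k)R^jQ$ in $L^2(\om)$. The difficulty is that $(R^jQ)_r$ genuinely blows up at the boundary, $|(R^jQ)_r(z)|\lesssim(1-|z|)^{-j}$ \emph{uniformly in} $r$, so dominated convergence is not available unless the factor $R^{N-j}(\psi^k)$ decays fast enough at $\partial\Bd$ to absorb this growth uniformly in $r$. In the Bergman-type range of weights this is a change-of-weight estimate (a radial derivative trades a power of $1-|z|$ for a power of the weight), but in the Dirichlet-type range it is a genuine consequence of the hypothesis $\psi\in\Mult(B^N_\om)$ — through Carleson-measure bounds for the radial derivatives of a multiplier — rather than of mere membership $\psi\in B^N_\om$; that some input beyond $B^N_\om$-membership is unavoidable is clear from the failure of the statement for $N=0$. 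Carrying out this uniform boundary estimate, together with the bookkeeping of powers of $\psi$ through the Leibniz expansion, is the technical heart of the argument; granting it, the convergence, and hence the theorem, follows. (An alternative organisation is an induction on $N$ using \eqref{NormNandN-1}, which replaces $\|\cdot\|_{B^N_\om}$-control by $\|\cdot\|_{B^{N-1}_\om}$-control of one radial derivative, but it runs into the same boundary estimate.)
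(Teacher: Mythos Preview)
Your approximants $\psi^kQ_r$ are exactly the ones the paper uses: with $Q=\varphi^{N+k-1}/\psi^k$ and $g=\psi^k$, the paper's Lemma~\ref{LemRadialEstimate} considers $Q_rg$. But the two arguments diverge at precisely the point you flag as ``the real obstacle'', and you have not closed that gap.

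The paper does \emph{not} try to show norm convergence of $\psi^kQ_r$ by expanding $R^N(\psi^kQ_r)$ via Leibniz; in fact only a uniform bound on $\|\psi^kQ_r\|_{B^N_\om}$ (hence weak convergence) is established. Two ingredients replace your missing boundary estimate. First, the key technical result is proved separately (Lemma~\ref{LemRatioEstimate}/Theorem~\ref{thm1}): by induction on the Besov order one shows $Q=\varphi^{N+k-1}/\psi^k\in\Mult(B^{N-1}_\om)$ with an explicit norm bound. This is exactly the ``bookkeeping of powers of $\psi$'' you allude to, but done cleanly at the level of multiplier norms, using only the multiplier inclusion $\Mult(B^k_\om)\subseteq\Mult(B^{k-1}_\om)$ and the Leibniz identity for $R$. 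Second, instead of expanding $R^N(\psi^kQ_r)$ directly, one writes
\[
Q_r\psi^k=Q_r(\psi^k-(\psi^k)_r)+(\varphi^{N+k-1})_r,
\]
and controls the first term via the two elementary lemmas $\|g-g_r\|_{B^{N-1}_\om}\le(1-r)\|g\|_{B^N_\om}$ and $\|RQ_r\|_{\Mult(B^{N-1}_\om)}\le\|Q\|_{\Mult(B^{N-1}_\om)}/(1-r^2)$; the factors of $1-r$ cancel and one gets $\|Q_r\psi^k\|_{B^N_\om}\le 3\|Q\|_{\Mult(B^{N-1}_\om)}\|\psi^k\|_{B^N_\om}+\|\varphi^{N+k-1}\|_{B^N_\om}$, uniformly in $r$.

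So your outline is on the right track strategically, but the step you concede is unproved is the entire content of the argument. Your proposed route---dominated convergence for each Leibniz term, with a uniform pointwise bound $|(R^jQ)_r|\lesssim(1-|z|)^{-j}$---asks for more than is needed (strong vs.\ weak convergence) and, as you note, the domination is not available from general principles. The decomposition $Q_r g=Q_r(g-g_r)+f_r$ sidesteps this entirely, trading the hard analysis for the single a~priori multiplier estimate $Q\in\Mult(B^{N-1}_\om)$.
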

Consequently, if $N=1$ or $\varphi\in \HC_1(B^N_\om)$, then $|\varphi(z)|\le |\psi(z)|$ implies $\varphi \in [\psi]$. In particular, the cyclicity of $\varphi$ implies cyclicity of $\psi$. A main technical step in the proof of Theorem \ref{AbsoluteValue} is interesting in its own right:

\begin{theorem}\label{thm1} If  $\varphi, \psi \in \mathrm{Mult}(B^N_\omega)$ with $\varphi/\psi \in H^\infty(\Bd)$, then $\varphi^{N+1}/\psi \in \mathrm{Mult}(B^N_\omega)$.
\end{theorem}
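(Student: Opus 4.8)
The plan is to reduce the statement to a concrete estimate on the radial derivative. Since $\varphi/\psi \in H^\infty(\Bd)$, write $h = \varphi/\psi$, so $\varphi = h\psi$ with $\|h\|_\infty \le C$. We want to show $g := \varphi^{N+1}/\psi = h^{N+1}\psi^N \in \Mult(B^N_\om)$. Note $g$ is a bounded analytic function on $\Bd$ (it equals $h^{N}\varphi^N \cdot (\varphi/\psi)$... actually $g = h^{N+1}\psi^N = \varphi^{N+1}/\psi$, and since $\varphi, \psi$ are multipliers and hence bounded, and $\varphi/\psi$ is bounded, $g$ is bounded). Because $\|u\|_\infty \le \|u\|_{\Mult(B^N_\om)}$ with equality when $N=0$, the case $N=0$ (if it arose) would be trivial; for $N \ge 1$ the content is that the $N$-th radial derivative of $gf$ lies in $L^2(\om)$ with controlled norm for every $f \in B^N_\om$.

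The key step is a Leibniz-type expansion. Using \eqref{NormNandN-1} iteratively, membership of $gf$ in $B^N_\om$ is governed by $R^N(gf) = R^N(h^{N+1}\psi^N f)$. Expanding $R^N$ of a product by the (noncommutative but still valid for the first-order operator $R$) Leibniz rule, every term is a product of the form $(R^{a_0}h^{N+1})(R^{a_1}\psi)\cdots(R^{a_N}\psi)(R^{a}f)$-type expressions with $\sum$ of orders equal to $N$; more carefully, one should differentiate $h^{N+1}$ and $\psi^N$ and $f$ and collect. The crucial observation is that whenever we differentiate the factor $h^{N+1}$, say we apply $j$ derivatives landing on it in total across the $N+1$ copies, we can always peel off at least one undifferentiated copy of $h$ — since $j \le N < N+1$ — to absorb it as a bounded factor, and the remaining $\le N$ copies of $h$ that do get differentiated can each be paired with a derivative that would otherwise have hit a copy of $\psi$. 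Concretely, one rewrites $R(h) = R(\varphi/\psi) = (\psi R\varphi - \varphi R\psi)/\psi^2 = (R\varphi \cdot h - \varphi R\psi \cdot h/\psi)/\psi = \dots$; the point is that $\psi \cdot R(h^{k})$ is a polynomial expression in $h$, $R\varphi$, $R\psi$ with bounded coefficients, so each derivative of $h$ "costs" one power of $\psi$ in the denominator but is compensated because we started with $\psi^N$ in the numerator and will differentiate it fewer than $N$ times whenever $h$ is being differentiated at least once. The bookkeeping thus shows that $R^N(gf)$ is a finite sum of terms each of the shape $(\text{bounded function}) \cdot (R^{b_1}\psi)\cdots(R^{b_k}\psi)(R^{b_0} f)$ with $b_0 + b_1 + \cdots + b_k \le N$ and $k \le N$, i.e. exactly the terms that appear when one expands $R^N(\psi^N f)$ — which lies in $L^2(\om)$ because $\psi \in \Mult(B^N_\om)$ implies $\psi^N \in \Mult(B^N_\om)$ and $f \in B^N_\om$.

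The main obstacle I expect is making the combinatorial bookkeeping of the previous paragraph precise and uniform: one must verify that in the full Leibniz expansion of $R^N\big((\varphi/\psi)^{N+1}\psi^N f\big)$ no term ever requires dividing by a power of $\psi$ higher than the power of $\psi$ available in the numerator of that same term, so that after cancellation every term is genuinely a bounded-coefficient combination of derivatives of $\psi$ (of total order $\le N$, with at most $N$ factors) times a derivative of $f$. The cleanest way to organize this is probably by induction on $N$ using \eqref{NormNandN-1}: assuming the result for $N-1$, write $R(gf) = R(h^{N+1}\psi^N f)$ and expand one derivative, showing each resulting summand is $h^{N}\psi^{N-1}$ times something in $B^{N-1}_\om$ (using that $Rh \cdot \psi$ is a bounded-coefficient combination of $R\varphi$ and $R\psi$, both of which carry a factor one can redistribute), then invoking the inductive hypothesis and the elementary fact that $\Mult(B^{N-1}_\om)$ is an algebra to conclude. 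I would also need the standing remark, recorded earlier, that functions analytic in a neighborhood of $\overline{\Bd}$ multiply every $B^N_\om$, though here the relevant boundedness comes from $h \in H^\infty$ directly rather than from analyticity across the boundary.
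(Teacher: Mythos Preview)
Your strategy---induction on the order of the space, applying one radial derivative and using that $R\varphi, R\psi \in \Mult(B^k_\om, B^{k-1}_\om)$---is exactly the right framework, and it is also the paper's. But the inductive hypothesis you propose is too weak to close.

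Compute $R\big(\tfrac{\varphi^{N+1}}{\psi}\, f\big)$ directly:
\[
R\Big(\frac{\varphi^{N+1}}{\psi}\Big)f + \frac{\varphi^{N+1}}{\psi}\,Rf
= (N+1)\,\frac{\varphi^{N}}{\psi}\,(R\varphi)f \;-\; \frac{\varphi^{N+1}}{\psi^{2}}\,(R\psi)f \;+\; \frac{\varphi^{N+1}}{\psi}\,Rf.
\]
The first and third terms are fine: $\varphi^N/\psi \in \Mult(B^{N-1}_\om)$ is ``Theorem~\ref{thm1} for $N-1$'', and one extra factor of $\varphi\in\Mult(B^{N-1}_\om)$ handles the third. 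The second term, however, needs $\varphi^{N+1}/\psi^{2}\in \Mult(B^{N-1}_\om)$. That is \emph{not} an instance of Theorem~\ref{thm1} at level $N-1$: you cannot get it from $\varphi^{N}/\psi$, and you cannot absorb the extra factor of $h=\varphi/\psi$ because $H^\infty$ functions need not multiply $B^{N-1}_\om$. Your phrasing ``each summand is $h^N\psi^{N-1}$ times something in $B^{N-1}_\om$'' hides exactly this: the leftover factor is $h\cdot(R\psi)f$, and $h\in H^\infty$ does not place $h\cdot(R\psi)f$ in $B^{N-1}_\om$.

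The fix is to strengthen the inductive statement to the two-parameter family
\[
\frac{\varphi^{\,s+k}}{\psi^{\,s}} \in \Mult(B^k_\om)\qquad\text{for all } s\ge 1,\ 0\le k\le N,
\]
and induct on $k$ with $s$ free. Then
\[
R\Big(\frac{\varphi^{s+k}}{\psi^{s}}\Big)
= (s+k)\,\frac{\varphi^{\,s+k-1}}{\psi^{\,s}}\,R\varphi \;-\; s\,\frac{\varphi^{\,s+k}}{\psi^{\,s+1}}\,R\psi,
\]
and both fractions on the right are of the form $\varphi^{\,s'+(k-1)}/\psi^{\,s'}$ (with $s'=s$ and $s'=s+1$), so the inductive hypothesis at level $k-1$ applies to each. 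This is precisely Lemma~\ref{LemRatioEstimate} in the paper, from which Theorem~\ref{thm1} is the case $s=1$, $k=N$. A secondary point: your earlier remark that the individual Leibniz terms are in $L^2(\om)$ ``because they appear in $R^N(\psi^N f)$'' is not a valid inference---a sum lying in $L^2$ says nothing about its summands---so the inductive route is indeed the one to pursue, but with the loaded hypothesis above.
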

Of course, if $N=0$, then $\HH$ is a weighted Bergman space or the Hardy space and $\mathrm{Mult}(B^N_\omega)=H^\infty(\Bd)$. Thus, in this case the Theorem is trivial. However, if $N>0$, then it may happen that $\mathrm{Mult}(B^N_\om) \subsetneq H^\infty(\Bd)$, and hence one may have to choose $n>1$ in order for $\varphi^n/\psi$ to be a multiplier.

By applying Theorem \ref{thm1}  with $\varphi=1$ we recover the following theorem from \cite{LMN20}.
 \begin{corollary} The "one function Corona Theorem" holds for $\mathrm{Mult}(B^N_\om)$. That is, whenever $\psi \in \mathrm{Mult}(B^N_\om)$ with $|\psi(z)|\ge 1$ for all $z\in \Bd$, then $1/\psi \in \mathrm{Mult}(B^N_\om)$. Consequently, for all $\varphi\in \mathrm{Mult}(B^N_\om)$ we have $\sigma(M_\varphi)=\overline{\varphi(\Bd)}$.
 \end{corollary}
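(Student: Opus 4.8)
The final statement to prove is the Corollary: the one-function Corona theorem for $\Mult(B^N_\om)$, i.e. if $\psi\in\Mult(B^N_\om)$ with $|\psi(z)|\ge 1$ on $\Bd$, then $1/\psi\in\Mult(B^N_\om)$, and consequently $\sigma(M_\varphi)=\overline{\varphi(\Bd)}$ for every multiplier $\varphi$. As the excerpt itself indicates, the first assertion is obtained by \emph{applying Theorem~\ref{thm1} with $\varphi=1$}. So the proof is essentially one line, and my job is just to spell out the two small arguments around that invocation.

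\smallskip

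\emph{Step 1: reduce to Theorem~\ref{thm1}.} Let $\psi\in\Mult(B^N_\om)$ with $|\psi(z)|\ge 1$ for all $z\in\Bd$, and set $\varphi\equiv 1$. Then $\varphi=1$ extends analytically to a neighborhood of $\overline{\Bd}$, hence $\varphi\in\Mult(B^N_\om)$ (as noted in Section~\ref{sec:prelim}), and $\varphi/\psi=1/\psi\in H^\infty(\Bd)$ precisely because $|\psi|\ge 1$ on $\Bd$. Thus the hypotheses of Theorem~\ref{thm1} are met, and the conclusion $\varphi^{N+1}/\psi\in\Mult(B^N_\om)$ reads $1/\psi\in\Mult(B^N_\om)$, since $\varphi^{N+1}\equiv 1$. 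That is the whole of the first claim; the only thing to check is that $1$ is a multiplier and that $|\psi|\ge1$ gives the bounded-quotient hypothesis, both of which are immediate.

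\smallskip

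\emph{Step 2: deduce the spectral statement.} Let $\varphi\in\Mult(B^N_\om)$. One inclusion is standard and holds for any Hilbert function space: since point evaluations are bounded, for $z\in\Bd$ the kernel vector $k_z$ is an eigenvector of $M_\varphi^*$ with eigenvalue $\overline{\varphi(z)}$, so $\varphi(z)\in\sigma(M_\varphi)$; as $\sigma(M_\varphi)$ is closed, $\overline{\varphi(\Bd)}\subseteq\sigma(M_\varphi)$. For the reverse inclusion, suppose $\lambda\notin\overline{\varphi(\Bd)}$. Then $\varphi-\lambda\in\Mult(B^N_\om)$ and $|\varphi(z)-\lambda|\ge c>0$ on $\Bd$ for some constant $c$; replacing $\varphi-\lambda$ by $(\varphi-\lambda)/c$ and applying Step~1, we get that $1/(\varphi-\lambda)\in\Mult(B^N_\om)$, so $M_{\varphi}-\lambda = M_{\varphi-\lambda}$ is invertible with inverse $M_{1/(\varphi-\lambda)}$, whence $\lambda\notin\sigma(M_\varphi)$. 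Combining the two inclusions gives $\sigma(M_\varphi)=\overline{\varphi(\Bd)}$.

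\smallskip

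\emph{Where the difficulty lies.} There is no real obstacle at the level of this Corollary — all the work has already been front-loaded into Theorem~\ref{thm1} (and through it Theorem~\ref{AbsoluteValue}), which is where the genuine analysis of the quotient $\varphi^{N+1}/\psi$ being a multiplier takes place. The only points deserving a word of care are purely bookkeeping: (i) that the constant function $1$ genuinely lies in $\Mult(B^N_\om)$, so that Theorem~\ref{thm1} is literally applicable rather than merely morally so; and (ii) in the spectral argument, the normalization of $\varphi-\lambda$ so that its modulus is bounded below by exactly $1$, matching the hypothesis of Step~1 on the nose. Neither requires more than a sentence.
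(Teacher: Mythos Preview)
Your proof is correct and follows exactly the approach the paper intends: applying Theorem~\ref{thm1} with $\varphi=1$ to obtain $1/\psi\in\Mult(B^N_\om)$, and then deducing the spectral identity by the standard eigenvector/invertibility argument. The paper itself gives no further details beyond the one-line indication, so your write-up actually supplies more than the paper does.
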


Theorem \ref{thm1} is perhaps reminiscent of Wolff's Ideal Theorem for $H^\infty$, \cite{Wolff}. And indeed, as in \cite{BanjadeTrent},  Theorem \ref{thm1} does imply a simple condition for membership in radical ideals generated by principal ideals in $\Mult(B^N_\om)$. For $\psi\in \Mult(B^N_\om)$ let $$\mathrm{Rad}(\psi)=\{\varphi\in \Mult(B^N_\om): \varphi^n=u\psi \text{ for some }u\in \Mult(B^N_\om), \; n\in \N\}.$$

\begin{corollary} If $\varphi, \psi \in \mathrm{Mult}(B^N_\omega)$, then $\varphi\in \mathrm{Rad}(\psi)$ if and only if there is $n\in \N$ such that $\varphi^n/\psi \in H^\infty$.
\end{corollary}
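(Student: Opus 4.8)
The plan is to use Theorem~\ref{thm1} to upgrade a weak divisibility condition (quotient in $H^\infty$) to a strong one (quotient in $\Mult(B^N_\om)$), which is exactly what is needed to pass between the two characterizations of membership in the radical ideal. The statement is an "if and only if," so I would treat the two directions separately, with the nontrivial content in the "if" direction.

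For the easy direction, suppose $\varphi \in \mathrm{Rad}(\psi)$, so that $\varphi^n = u\psi$ for some $u \in \Mult(B^N_\om)$ and some $n \in \N$. Then $\varphi^n/\psi = u \in \Mult(B^N_\om) \subseteq H^\infty(\Bd)$, using the containment $\|u\|_\infty \le \|u\|_{\Mult(B^N_\om)}$ noted in Section~\ref{sec:prelim}. This gives the desired $H^\infty$ quotient with the same $n$.

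For the converse, suppose there is $n \in \N$ with $\varphi^n/\psi \in H^\infty(\Bd)$. Set $\varphi_1 = \varphi^n$, which is again a multiplier. Then $\varphi_1/\psi \in H^\infty(\Bd)$, so Theorem~\ref{thm1} applies to the pair $\varphi_1, \psi$ and yields $\varphi_1^{N+1}/\psi \in \Mult(B^N_\om)$. But $\varphi_1^{N+1} = \varphi^{n(N+1)}$, so with $m = n(N+1) \in \N$ and $u = \varphi^m/\psi \in \Mult(B^N_\om)$ we have $\varphi^m = u\psi$, i.e. $\varphi \in \mathrm{Rad}(\psi)$.

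There is essentially no obstacle here beyond correctly invoking Theorem~\ref{thm1}; the only point that requires a moment's care is that Theorem~\ref{thm1} is stated for a multiplier $\varphi$ and a multiplier $\psi$ with $\varphi/\psi \in H^\infty$, so one must first replace $\varphi$ by the power $\varphi^n$ to get into that setting, and then the output exponent $N+1$ simply multiplies through. One should also observe that the case $N=0$ is consistent: there $\Mult(B^0_\om) = H^\infty(\Bd)$, so the $H^\infty$ condition already says $\varphi^n/\psi$ is a multiplier and the conclusion is immediate (and indeed Theorem~\ref{thm1} with $N=0$ gives $\varphi_1/\psi \in H^\infty$ directly). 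Thus the argument is uniform in $N \in \N_0$.
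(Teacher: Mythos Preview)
Your proof is correct and follows exactly the approach the paper intends: the corollary is stated without proof, with the remark that Theorem~\ref{thm1} yields it ``as in \cite{BanjadeTrent},'' and your argument is precisely the natural derivation from Theorem~\ref{thm1}. The only minor comment is that in Section~\ref{sec:main} the standing assumption is $N\in\N$, so your closing remark about $N=0$ is extra but harmless.
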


In the course of the proof of Theorem \ref{thm1} we will also establish some uniform norm bounds that are useful for the proof of our next Theorem. Let $\C_{\stable}[z]$ denote the stable polynomials, that is, the polynomials with no zeros in $\Bd$.
\begin{theorem} \label{PolyCn} We have that $\C_{\stable}[z] \subseteq \HC_N(B^N_\om)$. Furthermore, if $N>0$ and $\om$ is of the form $d\om(z)= u(r)2rdr d\sigma(w)$ for some $u\in L^\infty([0,1])$, then $\C_{\stable}[z]\subseteq  \HC_{N-1}(B^N_\om)$.
\end{theorem}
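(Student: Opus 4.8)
The plan is to deduce the polynomial cyclicity statement from Theorem \ref{thm1} and its quantitative byproducts. First I would show that it suffices to treat a single monomial-type building block: writing a stable polynomial $p$ as a product $p = c\,\prod_j \ell_j$ of affine factors $\ell_j(z) = 1 - \langle z, a_j\rangle$ with $|a_j|\le 1$ (after normalizing so $p(0)=1$, which is harmless since constants are cyclic multipliers), part (b) of Lemma \ref{easyMultFree} lets me multiply membership statements: if each $\ell_j\in\HC_N(B^N_\om)$ then $p\in\HC_{N}(B^N_\om)$ (the exponents add but Lemma \ref{easyMultFree}(b) gives exactly $[\,\prod \ell_j^{N}\,]=[\,\prod \ell_j^{N+1}\,]$, hence $[p^N]=[p^{N+1}]$). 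So the whole theorem reduces to: for $|a|\le 1$, the function $\ell(z)=1-\langle z,a\rangle$ satisfies $\ell\in\HC_N(B^N_\om)$, and under the extra hypothesis on $\om$, $\ell\in\HC_{N-1}(B^N_\om)$.

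For the reduced statement I would set $\varphi=1$ and $\psi=\ell$ in Theorem \ref{thm1}: since $1/\ell\in H^\infty(\Bd)$ fails in general (the zero of $\ell$ may sit on $\partial\Bd$), I instead apply Theorem \ref{thm1} to $\varphi=\ell$ and $\psi=\ell$ trivially, which is not enough — the right move is to use the quantitative norm bounds the authors promise from the proof of Theorem \ref{thm1}. Concretely, for $0<t<1$ put $\ell_t(z)=1-t\langle z,a\rangle$, so $\ell_t$ is bounded below on $\overline{\Bd}$ and $\ell/\ell_t\in H^\infty$ with norm controlled uniformly in $t$; then $\ell^{N+1}/\ell_t\in\Mult(B^N_\om)$ by Theorem \ref{thm1}, and the uniform bounds give $\|\ell^{N+1}/\ell_t\|_{\Mult(B^N_\om)}\le C$ independent of $t$. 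Since $(\ell^{N+1}/\ell_t)\,\ell_t^{?}$ must be arranged to converge to $\ell^{N}$ as $t\to 1$, I would multiply by a compensating power: $(\ell^{N+1}/\ell_t)\cdot \ell_t$ lies in $[\ell^{N+1}]$ for each $t$ — wait, more carefully, $\ell^{N+1}/\ell_t = u_t\in\Mult(B^N_\om)$ means $u_t\,\ell_t = \ell^{N+1}$, and I want to pull $\ell^N$ into $[\ell^{N+1}]$, so I should write $\ell^N = v_t\,\ell^{N+1}/\ell_t\cdot(\text{something})$; the clean approach is to note $\ell^N\ell_t = \ell^{N+1}/\ell_t\cdot \ell_t^2/\ell^{?}$… the bookkeeping here needs care, but the structural point is that $u_t\to \ell^{N}$ appropriately (e.g. $u_t = \ell^{N+1}/\ell_t \to \ell^N$ pointwise, and boundedly in $\Mult$, hence weak-$*$, hence $\ell^N\in[\ell^{N+1}]$ via a weak-$*$/norm closedness argument for the invariant subspace, using that $[\ell^{N+1}]$ is weak-$*$ closed in the complete Pick case or directly that $u_t\psi\to\ell^N\psi$ weakly for all $\psi$). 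Combined with the trivial inclusion $[\ell^{N+1}]\subseteq[\ell^N]$ this gives $\ell\in\HC_N(B^N_\om)$.

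For the sharper statement under $d\om = u(r)\,2r\,dr\,d\sigma$, the idea is that this weight is equivalent to (a bounded perturbation of) Lebesgue-type measure, so $B^N_\om$ behaves like $D_\alpha$ with $\alpha = -(d+1) + 2N$ up to norm equivalence; crucially this is a weight for which one derivative is ``free'', i.e. $B^N_\om$ sits a half-step lower on the smoothness scale than a generic $B^N_\om$, and the same limiting argument then only needs to absorb $N-1$ extra factors of $\ell$ rather than $N$ — one runs Theorem \ref{thm1} with the improved exponent that the absolute-continuity of $\mu$ yields in its proof (the authors say ``some uniform norm bounds that are useful for the proof of our next Theorem''), getting $\ell^{N}/\ell_t\in\Mult(B^N_\om)$ with uniform bound, hence $\ell^{N-1}\in[\ell^{N}]$ by the same weak-$*$ limit. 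The main obstacle I anticipate is making the limiting argument $t\to1$ rigorous: one must know that the invariant subspace $[\ell^{N+1}]$ is closed under the relevant (bounded) weak limits, and one must have the uniform multiplier-norm estimate on $\ell^{N+1}/\ell_t$ — this is exactly the quantitative content extracted from the proof of Theorem \ref{thm1}, so the real work is in that extraction, not in the reduction to affine factors, which is routine via Lemma \ref{easyMultFree}(b).
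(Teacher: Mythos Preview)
Your reduction step contains a genuine error: stable polynomials in $\C^d$ for $d\ge 2$ do \emph{not} factor as products of affine forms $1-\langle z,a_j\rangle$. For instance $p(z_1,z_2)=1-2z_1z_2$ (or $1-z_1^2-z_2^2$) is irreducible. So the decomposition $p=c\prod_j\ell_j$ is unavailable, and the appeal to Lemma~\ref{easyMultFree}(b) is moot.

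The good news is that the dilation argument you sketch for a single $\ell$ works verbatim for $p$ itself, and this is exactly what the paper does (Theorem~\ref{f&f_r}): set $\varphi=p$, $\psi=p_r$, check $\|p/p_r\|_\infty\le M$ uniformly in $r$ (this holds because every slice $p_z$ is a one-variable stable polynomial of degree $\le\deg p$, and $|(1-A\lambda)/(1-Ar\lambda)|\le 2$ for $|A|\le 1$), then Lemma~\ref{LemRatioEstimate} gives a uniform bound on $\|p^{N+1}/p_r\|_{\Mult(B^N_\om)}$, so $p^{N+1}/p_r\to p^N$ weakly and $p^N\in[p^{N+1}]$. Your hesitation about weak closedness is unfounded: $[p^{N+1}]$ is a closed linear subspace, hence weakly closed. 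So for the first part your core idea is right; just drop the factorization and apply it directly to $p$.

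For the second part your sketch misses the actual mechanism. There is no ``improved exponent'' coming out of Theorem~\ref{thm1}; the gain comes instead from a one-variable estimate: for a stable polynomial $q$ of degree $\le m$ one has $|(d/d\lambda)^k(q^n/q_r)|\le c$ for $k<n$ \emph{and}, crucially, $\int_\D |(d/d\lambda)^n(q^n/q_r)|^2\,dA\le c$, all uniformly in $r$. One then writes $R^N(p^N/p_r)$ in terms of $\lambda$-derivatives of the slice $(p^N/p_r)_z$, and the hypothesis $d\om=u(r)\,2r\,dr\,d\sigma$ with $u\in L^\infty$ is exactly what lets you dominate $\int_{\Bd}|R^N(p^N/p_r)|^2\,d\om$ by an integral over $\partial\Bd$ of these slice quantities. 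This yields $\|p^N/p_r\|_{B^N_\om}\le C$ uniformly, hence $p^{N-1}\in[p^N]$. The $L^2$ (rather than $L^\infty$) control of the top derivative is the point of the extra hypothesis, and your proposal does not account for it.
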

 One may wonder what the smallest $k$ is such that $\C_{\stable}[z] \subseteq \HC_k(B^N_\om)$. Example \ref{C2-C1}  shows that one cannot do any better than Theorem~\ref{PolyCn} for the space $D_4(\D)$.
On the other hand, for each $\om$ there is another admissible radial measure $\om'$ such that $B^N_\om = B^{N+1}_{\om'}$ (with equivalence of norms), see \cite[Theorem~2.4]{AHMR_RadiallyWeightedBesov}. Theorem~\ref{PolyCn} is then of course not sharp for$B^{N+1}_{\om'}$.

For the Drury-Arveson space, represented as a Besov space via \eqref{DruryArveson}, Theorem~\ref{PolyCn} yields that
\begin{align*} \text{ if } d \text{ is odd, then } \C_{\stable}[z]\subseteq \HC_{\frac{d-1}{2}}(H^2_d),\\
\text{ if } d \text{ is even, then } \C_{\stable}[z]\subseteq \HC_{\frac{d}{2}-1}(H^2_d).\end{align*}

In particular, for $d=2$ every stable polynomial is cyclic.
In Section 2 we will show that if $n\le \frac{d}{4}-1$, then $\C_{\stable}[z] \nsubseteq \HC_n(H^2_d)$, see Proposition \ref{EmbedExamples} (b). Thus these inclusions are best possible for $d=2$ and $d=4$, but for other values of $d$ there is potentially a gap.

If $f\in \mathrm{Hol}(\Bd)$ extends to be continuous on $\overline{\Bd}$, then we write $Z(f)=\{z\in \overline{\Bd}:f(z)=0\}$. We say that $f\in \mathrm{Hol}(\Bd)$ satisfies a Lipschitz condition of order $\alpha>0$, if there is $C>0$ such that $|f(z)-f(w)|\le C |z-w|^\alpha$ for all $z,w\in \Bd$. Note that functions that satisfy a Lipschitz condition can be extended to be continuous on $\overline{\Bd}$.
\begin{theorem}\label{ZeroSetThm}
 Let $f,g\in \Mult(B^N_\om)$ be such that
 \begin{enumerate}
 \item $f(z)\ne 0$ and $g(z)\ne 0$ for all $z\in \Bd$,
 \item $f$ extends to be analytic in a neighborhood of $\overline{\Bd}$,
 \item $g$ satisfies a Lipschitz condition of order $\alpha>0$.
 \end{enumerate}
Assume that $Z(f)\cap \dB \subseteq Z(g)\cap \dB$. Then there is an $n\in \N$ such that $g^n\in [f]$. Furthermore, if we additionally assume that $g$ is a polynomial, then $g^N\in [f^N]\subseteq [f]$ for every $N \geq 1$.
\end{theorem}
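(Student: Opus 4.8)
The plan is to reduce everything to an application of Theorem \ref{AbsoluteValue}, after first constructing, out of $f$ and $g$, a third multiplier $h$ that is comparable to $g$ in modulus and divisible by $f$ in the $H^\infty$ sense. More precisely, I would like to build $h \in \Mult(B^N_\om)$ with $h/f \in H^\infty(\Bd)$ and $|h(z)| \le C|g(z)|^M$ for a suitable integer $M$; then Theorem \ref{AbsoluteValue} (applied to the pair $h$ and $g^M$, or to $f$ and $h$) gives a power of $g$ in $[f]$. The key geometric input is the hypothesis $Z(f)\cap\dB\subseteq Z(g)\cap\dB$ together with the regularity of $f$ (analytic past the closure) and of $g$ (Lipschitz of order $\alpha$): on $\overline{\Bd}$, $f$ vanishes only at points where $g$ also vanishes, and near such a point the analyticity of $f$ forces $|f|$ to vanish at most polynomially, while on the closed set where $f$ is bounded away from $0$ the quotient $g^k/f$ is trivially bounded. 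The standard device here is a \L{}ojasiewicz-type inequality: since $f$ is real-analytic on a neighborhood of the compact set $\overline{\Bd}$ and its zero set there is contained in $Z(g)$, there exist $C>0$ and a positive integer $M$ such that $|f(z)| \le C\,\dist(z, Z(f)\cap\overline{\Bd})^{?}$ is the wrong direction — rather I want the reverse bound, $|g(z)|^M \le C |f(z)|$? No: what I actually need is $|f(z)| \ge c\,|g(z)|^M$ near the common zeros is false in general, so instead I aim for the inclusion of zero sets to yield $|g(z)| \le C |f(z)|^{1/M}$ is also not what drives the argument. Let me restate the mechanism cleanly in the next paragraph.

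The correct route: I want to show $g^n \in [f]$, equivalently (by density of polynomials and the structure of $[f]$) that $g^n$ can be approximated by multiples of $f$. Consider $\psi = f$ and $\varphi = $ a multiplier with $\varphi/f \in H^\infty$ and $g^m/\varphi \in H^\infty$ for some $m$; Theorem \ref{AbsoluteValue} applied to $\varphi,\psi=f$ then yields $\varphi^{N+k-1}\in[f]$, and then since $g^{m(N+k-1)}/\varphi^{N+k-1} \in H^\infty$ we would want to conclude $g^{\text{(something)}} \in [f]$ — but this last step again needs Theorem \ref{AbsoluteValue} in the space, i.e. needs $g$ itself to be a multiplier, which it is. So the clean statement is: apply Theorem \ref{AbsoluteValue} with the roles $\varphi \leftarrow g^m$ (or a power of $g$) and $\psi \leftarrow f$ once we know $g^m/f \in H^\infty(\Bd)$. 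Thus the entire theorem reduces to the single analytic fact:
\begin{equation}\label{eq:Lojasiewicz-goal}
\text{there exists } m\in\N \text{ and } C>0 \text{ with } |g(z)|^m \le C\,|f(z)| \quad\text{for all } z\in\Bd,
\end{equation}
after which Theorem \ref{AbsoluteValue} gives $g^{m(N+k-1)} \in [g^m]\cdot(\text{bounded}) \subseteq [f]$, or more precisely $g^m$ has the modulus bound forcing $(g^m)^{N+k-1}\in[f]$, i.e. $g^n\in[f]$ with $n = m(N+k-1)$ and $k=1$ suffices if $N=1$; for the polynomial refinement one tracks powers more carefully. To prove \eqref{eq:Lojasiewicz-goal}: away from $Z(f)\cap\overline{\Bd}$ the ratio is bounded by continuity/compactness; near a point $z_0 \in Z(f)\cap\dB \subseteq Z(g)\cap\dB$, use that $f$ is analytic in a neighborhood of $z_0$ so, by the \L{}ojasiewicz inequality for the real-analytic function $|f|^2$, $|f(z)| \ge c\,\dist(z, Z(f))^{s}$ locally for some $s$; while the Lipschitz condition on $g$ gives $|g(z)| = |g(z)-g(z_0)| \le C|z-z_0|^\alpha$, and since $z_0\in Z(f)$ one has $\dist(z,Z(f))\le|z-z_0|$, hence $|g(z)|^m \le C|z-z_0|^{\alpha m} \le C'\dist(z,Z(f))^{\alpha m}$ — wait, this needs $\dist(z,Z(f))$ comparable to $|z-z_0|$ which is generally false. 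This is the gap I must be careful about: I should instead bound $|g(z)|^m$ by a power of $\dist(z, Z(f)\cap\overline{\Bd})$ directly, using that $g$ vanishes on the \emph{whole} set $Z(f)\cap\dB$, not just at $z_0$. For $z$ near that set, pick $z'\in Z(f)\cap\dB$ nearest to $z$; then $z'\in Z(g)$ too, so $|g(z)| \le C|z-z'|^\alpha = C\,\dist(z,Z(f)\cap\dB)^\alpha \le C\,\dist(z,Z(f)\cap\overline{\Bd})^\alpha$, and combined with the \L{}ojasiewicz lower bound $|f(z)|\ge c\,\dist(z, Z(f)\cap\overline{\Bd})^{s}$ (valid on a neighborhood of the compact set $Z(f)\cap\overline{\Bd}$, where $f$ is real-analytic) this yields \eqref{eq:Lojasiewicz-goal} with any $m \ge s/\alpha$.

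The main obstacle, and the step needing the most care, is precisely the \L{}ojasiewicz-type lower bound $|f(z)| \gtrsim \dist(z, Z(f))^{s}$ and its interaction with the fact that $f$ is analytic on a \emph{neighborhood} of $\overline{\Bd}$ but we only control $Z(f)$ on $\overline{\Bd}$ (the zero set could, a priori, accumulate at $\dB$ from outside). I would handle this by shrinking to a slightly larger ball $\Bd' = (1+\eps)\Bd$ on which $f$ is still analytic, applying the classical \L{}ojasiewicz inequality for the real-analytic function $z\mapsto |f(z)|^2$ on a relatively compact neighborhood, and then restricting back to $\overline{\Bd}$; one also checks that $Z(f)\cap\overline{\Bd}$ being compact and the local exponents being controllable lets one extract a single global pair $(C,s)$ by a covering argument. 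Once \eqref{eq:Lojasiewicz-goal} is in hand, the rest is bookkeeping: $g^m \in \Mult(B^N_\om)$ (a power of a multiplier), $g^m/f \in H^\infty(\Bd)$ by \eqref{eq:Lojasiewicz-goal}, so Theorem \ref{AbsoluteValue} gives $(g^m)^{N+k-1} = g^{m(N+k-1)} \in [f]$, proving the first assertion with $n = m(N+k-1)$ (take $k=1$). For the sharper conclusion when $g$ is a polynomial: then $g\in\Mult(B^N_\om)$ with $g/f$ possibly not bounded, but one can run Theorem \ref{AbsoluteValue} keeping track of the homogeneous-degree count, or apply the "furthermore" part of Theorem \ref{PolyCn} (stable polynomials lie in $\HC_N$) to reduce the needed power of $g$ from $m(N+k-1)$ down to $N$ — since $g$ is stable, $[g^N]=[g^{N+1}]=\cdots$, so $g^{m(N+k-1)}\in[f]$ together with $g^{m(N+k-1)} \in [g^N]$ gives $g^N\in[f]$ provided one also has the reverse containment $[g^N]\subseteq[f^N]$, which follows by the same modulus argument applied to $f^N$ and $g^{N}$ (note $g^{Nm}/f^N\in H^\infty$ and $f^N$ is again analytic past $\overline{\Bd}$, stable, hence in $\HC_N(B^N_\om)$). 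I expect the polynomial refinement to require one more invocation of Lemma \ref{easyMultFree}(b) to combine the pseudo-cyclicity of $f^N$ and $g^N$ cleanly.
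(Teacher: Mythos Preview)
Your overall strategy is exactly the one the paper uses: reduce to showing that $g^m/f\in H^\infty(\Bd)$ for some $m$ via a \L ojasiewicz-type argument, then feed this into Theorem~\ref{AbsoluteValue}. The application of Theorem~\ref{AbsoluteValue} and the use of Theorem~\ref{PolyCn} for the polynomial refinement are also the right moves.

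However, there is a genuine gap in your \L ojasiewicz step, and your proposed fix does not close it. The standard \L ojasiewicz inequality applied to the real-analytic function $|f|^2$ on a neighborhood $U\supseteq\overline{\Bd}$ yields $\dist(z,Z(f))^p\le C|f(z)|^2$ for $z\in\overline{\Bd}$, where $Z(f)$ is the \emph{full} zero set in $U$. What you need is the same inequality with $Z(f)$ replaced by $Z(f)\cap\dB$, because your Lipschitz bound on $g$ only gives $|g(z)|\le C\,\dist(z,Z(f)\cap\dB)^\alpha$. Since $Z(f)\cap\dB\subseteq Z(f)$, the inequality $\dist(z,Z(f))\le\dist(z,Z(f)\cap\dB)$ goes the wrong way. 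Passing to a slightly larger ball does not help: the zero variety of $f$ can extend outside $\overline{\Bd}$ through points of $Z(f)\cap\dB$ (think of $f(z)=z_1-1$ in $\mathbb{B}_2$, where $Z(f)\cap\dB=\{(1,0)\}$ but $Z(f)$ is a whole complex line), so on any larger ball the full zero set is still strictly bigger than $Z(f)\cap\dB$. The paper resolves this with a trick you are missing: first use \L ojasiewicz on $|f|^2$ together with the observation that $1-|z|^2\le 2\,\dist(z,Z(f))$ (valid because $f$ has no zeros in $\Bd$) to get $(1-|z|^2)^n\le C_1|f(z)|^2$; then apply \L ojasiewicz a \emph{second} time to the auxiliary real-analytic function
\[
r(z)=|f(z)|^2+\bigl(1-|z|^2\bigr)^n,
\]
whose zero set in $U$ is precisely $Z(f)\cap\dB$. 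This yields $\dist(z,Z(f)\cap\dB)^m\le C\,r(z)\le C'|f(z)|^2$ on $\overline{\Bd}$, exactly the inequality you need.

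For the polynomial case your idea is right but the logic is inverted at one point: from $g^{mN'}\in[f^N]$ and $g\in\HC_N(B^N_\om)$ you want to conclude $g^N\in[f^N]$. The correct chain is $g^N\in[g^N]=[g^{mN'}]\subseteq[f^N]$, using that $\HC_N$ gives $[g^N]=[g^{N+1}]=\cdots$; you wrote the containment $g^{mN'}\in[g^N]$, which is trivial and of no use. The paper runs Theorem~\ref{AbsoluteValue} with $k=N$ to get $g^{j(2N-1)}\in[f^N]$, then uses $[g^N]=[g^{j(2N-1)}]$ from Theorem~\ref{PolyCn}. No appeal to Lemma~\ref{easyMultFree}(b) is needed.
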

In particular, if $g$ is cyclic, then $f$ is cyclic. Thus, for polynomials  $p\in \C_{\stable}[z]$, the geometry of $Z(p)\cap \dB$ determines whether or not $p$ is cyclic. If $w\in \partial \Bd$, then $p(z)=1-\la z, w\ra$ is a polynomial such that $Z(p)\cap \dB=\{w\}$ and it is easily seen that $p$ is cyclic in $H^2_d$. This implies that if $f$ extends to be analytic in a neighborhood of $\overline{\Bd}$, has no zeros in $\Bd$, and only finitely many zeros in $\dB$, then $f$ is cyclic in $H^2_d$. In Examples \ref{Exa:2dim} and \ref{Exa:2dim2} we will give examples of cyclic polynomials in $H^2_d$ such that $Z(p)\cap \dB$ has 1 or 2 real dimensions. However, in Theorem \ref{thm:Noncyclic} we will show that if $Z(p)\cap \dB$ embeds a cube of real dimension $\ge 3$, then $p$ is not cyclic in $H^2_d$.

If the radially weighted Besov space happens to be what is called a complete Pick space (see Section \ref{SecPickSpaces} for the definitions), then the results of Theorem \ref{AbsoluteValue} can be partially extended to apply to arbitrary functions in $B^N_\om$. The spaces $D_\alpha(\Bd)$ are complete Pick spaces for all $\alpha \ge 0$. This includes the Dirichlet space $D$ and the Drury-Arveson space $H^2_d$. Furthermore, in \cite{AHMR_RadiallyWeightedBesov} general conditions were given on $\om$ that imply that $B^N_\om$ is a complete Pick space. For example, if  $\alpha >-1$ and if $d\om(z) = w(z) dV(z)$ where $\frac{w(z)}{(1-|z|^2)^\alpha}$  is non-decreasing as $|z|\to 1$,
then for $N\ge \frac{\alpha+d}{2}$  the space $B_\omega^N$ is a complete Pick space.
For us complete Pick spaces are important, because if $\HH$ is a complete Pick space, and if $f\in \HH$, then $f=u/v$ for some $u, v\in \MH$ with $v$ cyclic. \begin{theorem} \label{AbsoluteValuePickIntro} Let $N\in\N$, and let $B^N_\om$ be a radially weighted Besov space that is also a complete Pick space.
Let $f,g\in B^N_\om$ be such that $f/g\in H^\infty(\Bd)$.

 If $N=1$ or if $f=u/v$ for $v$ cyclic and  $u\in \HC_1(B^N_\om)$, then $f\in [g]$.
\end{theorem}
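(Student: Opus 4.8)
The plan is to reduce the general statement to Theorem \ref{AbsoluteValue} by exploiting the complete Pick property, which lets me write any function in $B^N_\om$ as a ratio of multipliers with cyclic denominator. First I would record the two ingredients I intend to use: (i) Theorem \ref{AbsoluteValue}, which for multipliers $\varphi,\psi$ with $\varphi/\psi\in H^\infty(\Bd)$ gives $\varphi^{N+k-1}\in[\psi^k]\subseteq[\psi]$, and in particular $\varphi\in[\psi]$ once $N=1$ or $\varphi\in\HC_1(B^N_\om)$; (ii) the structural fact that in a complete Pick space $\HH$ every $h\in\HH$ factors as $h=u/v$ with $u,v\in\Mult(\HH)$ and $v$ cyclic. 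I would also want the elementary remark that if $v$ is cyclic then $[vh]=[h]$ for any $h\in\HH$: one inclusion is immediate since $vh\in[h]$, and the other follows because $v$ cyclic means there are multipliers $w_j$ with $w_jv\to 1$, hence $w_j(vh)=(w_jv)h\to h$, giving $h\in[vh]$. (This uses only $\Mult(\HH)\subseteq\HH$, which holds here.)

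Next I would set up the factorizations. Write $f=u/v$ with $u\in\Mult(B^N_\om)$, $v\in\Mult(B^N_\om)$ cyclic — in the hypothesis that $f=u/v$ with $u\in\HC_1$ this is exactly what is given, and when $N=1$ I invoke (ii) to produce such a factorization. Similarly write $g=p/q$ with $p,q\in\Mult(B^N_\om)$ and $q$ cyclic. The goal $f\in[g]$ is, by the cyclicity-invariance remark applied on the $g$ side, equivalent to $f\in[p/q]=[p]$ after clearing $q$; more precisely I would argue $[g]=[qg]=[p]$. Likewise on the $f$ side, $f\in[p]$ is equivalent to $vf=u\in[vp]$? — here I must be slightly careful, since multiplying the target by a cyclic multiplier does not change the generated subspace but I should multiply both $f$ and the generator by the same cyclic factors. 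Cleanly: I want to show $u\in[q\,v\,p]$-type statements. Let me instead combine denominators. Set $\psi = v q\, p$ and $\varphi = v q\, u$? No — the right move is: $f/g = (u q)/(v p)\in H^\infty(\Bd)$ by hypothesis, and $uq$, $vp$ are both multipliers. So apply Theorem \ref{AbsoluteValue} with $\varphi := uq$ and $\psi := vp$.

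Now I carry out the consequence. When $N=1$, Theorem \ref{AbsoluteValue} gives $uq\in[vp]$ directly. Since $v$ and $q$ are cyclic, so is $vq$ (product of cyclic multipliers is cyclic, by Lemma \ref{easyMultFree}(b) with $n=m=0$, or directly), hence $[vq\,g] = [g]$ and $[uq]=[f q]$, $[f]=[fq]/\dots$; unwinding: from $uq\in[vp]$ and $[vp]=[v\,(pq)/q]$... I should phrase it as: $[f] = [vq\,f] = [uq]\subseteq[vp] = [vq\,g] = [g]$, using at each equality that multiplication by the cyclic multiplier $vq$ (resp. that $uq = vq\cdot f$, $vp = vq\cdot g$) preserves the generated invariant subspace. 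This yields $f\in[g]$. When instead $f=u/v$ with $u\in\HC_1(B^N_\om)$: I need $\varphi=uq\in\HC_1(B^N_\om)$ so that the stronger conclusion $\varphi\in[\psi]$ of Theorem \ref{AbsoluteValue} (the ``$N=1$ or $\varphi\in\HC_1$'' clause) applies. Since $q$ is cyclic and $u\in\HC_1$, Lemma \ref{easyMultFree}(b) with $(\varphi,n)=(u,1)$ and $(\psi,m)=(q,0)$ gives $[u\,q] = [u^2 q]$, i.e. $uq\in\HC_1(B^N_\om)$ (using that $[q]=[q^0\cdot q]=\HH$ since $q$ cyclic). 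Then Theorem \ref{AbsoluteValue} gives $uq\in[vp]$, and the same cyclic-clearing chain $[f]=[vq\,f]=[uq]\subseteq[vp]=[vq\,g]=[g]$ finishes the proof.

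The main obstacle I anticipate is bookkeeping rather than depth: making sure the cyclic multipliers I clear are the \emph{same} on both sides so that the equalities $[vq\,f]=[uq]$ and $[vq\,g]=[vp]$ are literally true (they are, since $uq = (vq)f$ and $vp=(vq)g$ as functions), and verifying the $\HC_1$-stability $uq\in\HC_1$ carefully from Lemma \ref{easyMultFree}(b) — in particular checking that $[q]=\HH$ is legitimately the ``$[\psi^0]$'' base case, which is fine since $\psi^0=1$ generates $\HH$ exactly when $1\in\HH$, true here. A secondary point is confirming that products of cyclic multipliers are cyclic, which again is Lemma \ref{easyMultFree}(b) or a one-line argument; and that $f/g\in H^\infty$ really does give $\varphi/\psi = uq/(vp)\in H^\infty$, which is immediate since $\varphi/\psi = f/g$ as meromorphic functions and the denominators are non-vanishing off a small set — here I should note $v,q$ need not be zero-free, but as $\Mult$ elements on the complete Pick space they are cyclic hence (by Lemma \ref{easyMultFree}(a), or rather its contrapositive) zero-free on $\Bd$, so the ratio identity holds pointwise on all of $\Bd$.
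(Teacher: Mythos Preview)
Your overall strategy is correct and essentially matches the paper's proof of Theorem~\ref{AbsoluteValuePick} (from which Theorem~\ref{AbsoluteValuePickIntro} follows): factor $f=u/v$ and $g=p/q$ with cyclic denominators via the complete Pick property, apply Theorem~\ref{AbsoluteValue} to the resulting multipliers, and then clear the cyclic factors. The paper arranges the steps a little differently --- it obtains $(uq)^N\in[vp]$, strips the cyclic $v$ and $q^N$ to get $u^N\in[g]$, and only then uses $u\in\HC_1$ to deduce $u\in[u^N]$ --- whereas you first promote $uq$ to $\HC_1$ and apply the $\HC_1$-clause directly to $uq$. Both routes are fine.

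However, your ``elementary remark'' that $[vh]=[h]$ for every cyclic multiplier $v$ and every $h\in\HH$ has a real gap. From $w_jv\to 1$ in $\HH$ you \emph{cannot} conclude $(w_jv)h\to h$ in $\HH$: multiplication by a general $h\in\HH$ is not a bounded operator on $\HH$ (that would force $h\in\Mult(\HH)$), so norm convergence of $w_jv$ does not transfer. Your parenthetical ``this uses only $\Mult(\HH)\subseteq\HH$'' is exactly where the argument breaks. In the paper this step is Lemma~\ref{multIndep1}, whose proof genuinely uses the complete Pick structure via Theorem~\ref{SarasonAnalog} and Lemma~\ref{cyclicSubspace}: one writes $h=\varphi/(1-\psi)$ with $\varphi\in\Mult(\HH)$, so that $[h]=[\varphi]$, and now $w_jv\varphi-\varphi = M_\varphi(w_jv-1)\to 0$ because $\varphi$ \emph{is} a multiplier. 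In your actual application you only need the case where the product $vh$ lands in $\Mult(\HH)$ (namely $(vq)f=uq$ and $(vq)g=vp$), which is precisely the hypothesis of Lemma~\ref{multIndep1}; citing that lemma in place of the flawed elementary argument closes the gap.

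A smaller bookkeeping point: invoking Lemma~\ref{easyMultFree}(b) with $n=1$, $m=0$ gives $[u\cdot q^0]=[u^2 q]$, i.e.\ $[u]=[u^2q]$, not $[uq]=[u^2q]$; and in any case $[uq]=[u^2q]$ is not literally $uq\in\HC_1$, which means $[uq]=[(uq)^2]=[u^2q^2]$. The clean fix is to observe $q\in\HC_0\subseteq\HC_1$ and apply the lemma with $n=m=1$.
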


 It will follow that if $|f(z)|\le |g(z)|$, and if $f$ is cyclic, then $g$ is cyclic. Since the constant 1 is cyclic in $B^N_\om$ the Theorem implies that any $g\in \HH$ that is bounded below, must be cyclic. Thus, Theorem \ref{AbsoluteValuePickIntro}  improves Theorem 1.5 of \cite{RiSunkesHankel}, where the Theorem was proved only for $H^2_d$ under the additional assumptions that $f=1$ and  $g$ be in the Bloch space.

For the Dirichlet space $D$ of the unit disc Theorem \ref{AbsoluteValuePickIntro} was known, see Corollary 5.5 of \cite{RiSuMMJ}. The proof here is considerably less technical than the one in  \cite{RiSuMMJ}. Theorem \ref{AbsoluteValuePickIntro} will follow from Theorem \ref{AbsoluteValuePick}, which contains a slightly more general result.


\section{Some Examples} \label{sec:ex}
In order to illustrate our theorems we start with some examples for  $H^2_d$. We present two ways to embed $D_{\frac{k-1}{2}}(\D)$ in $H^2_d$, where $1\le k\le d$. The first one of these is  well-known, and has been used before to construct functions with interesting properties in the Drury-Arveson space, see e.g. \cite[Theorem 3.3]{Arveson},  \cite[Lemma 2.1]{HartzHenkin},  \cite[Lemma 9.1]{AHMRSubhomogeneous}, or see \cite[Example 2]{BenetCondoriEtAl} for a bidisc version of such an embedding.

 Note that if $z=(z_1,\dots, z_d)\in \Bd$, then  the geometric-arithmetic mean inequality implies that for each integer $k$ with $1\le k\le d$ we have  $$\left(\prod_{j=1}^k|z_j|^2\right)^{1/k}\le \frac{\sum_{j=1}^k|z_j|^2}{k}\le \frac{1}{k}.$$
 Hence $\tau_k(z)=k^{k/2}\prod_{j=1}^kz_j$ maps $\Bd$ into $\D$.

\begin{lemma}\label{Lem:DiriEmbed} Let $1\le k\le d$. The operator $$T_{k,d}: D_{\frac{k-1}{2}}(\D)\to H^2_d,\ \  T_{k,d}f=f\circ \tau_k$$ is bounded and bounded below. Furthermore, if $\varphi \in \Mult(D_{\frac{k-1}{2}}(\D))$, then  $T_{k,d}\varphi \in \Mult(H^2_d)$.
\end{lemma}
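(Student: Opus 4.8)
The plan is to obtain boundedness by an explicit norm computation, and the multiplier assertion by comparing reproducing kernels. For boundedness: since $\tau_k(z)=k^{k/2}z_1\cdots z_k$, the map $T_{k,d}$ sends the monomial $\lambda^s$ to $k^{sk/2}z_1^s\cdots z_k^s$, which has $H^2_d$-norm squared $k^{sk}\frac{(s!)^k}{(sk)!}$ by the formula $\|z^\alpha\|_{H^2_d}^2=\alpha!/|\alpha|!$ with $\alpha=(s,\dots,s,0,\dots,0)$. The images of distinct monomials are orthogonal in $H^2_d$, so $\|T_{k,d}f\|_{H^2_d}^2=\sum_{s\ge0}|\widehat f(s)|^2\,k^{sk}\frac{(s!)^k}{(sk)!}$ for $f(\lambda)=\sum_s\widehat f(s)\lambda^s$. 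By Stirling's formula $\binom{sk}{s,\dots,s}=\frac{(sk)!}{(s!)^k}\approx k^{sk}(s+1)^{-(k-1)/2}$, so $k^{sk}\frac{(s!)^k}{(sk)!}\approx(s+1)^{(k-1)/2}$ uniformly in $s$; comparing with $\|f\|_{D_{\frac{k-1}{2}}(\D)}^2\approx\sum_s(s+1)^{(k-1)/2}|\widehat f(s)|^2$ shows $T_{k,d}$ is bounded and bounded below.

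\emph{Reduction of the multiplier statement to a kernel inequality.} After replacing the norm of $D_{\frac{k-1}{2}}(\D)$ by the equivalent one $\|\lambda^s\|^2:=k^{sk}(s!)^k/(sk)!$, the map $T_{k,d}$ becomes a unitary onto $\HH_1:=\clos\{(z_1\cdots z_k)^s:s\ge0\}\subseteq H^2_d$, and since the $(z_1\cdots z_k)^s$ are orthogonal in $H^2_d$, $\HH_1$ carries the reproducing kernel $k_{\HH_1}(z,z')=g_k(w)$, where $w=(z_1\cdots z_k)\overline{z_1'\cdots z_k'}$ and $g_k(u)=\sum_{s\ge0}\binom{sk}{s,\dots,s}u^s$. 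As $T_{k,d}$ intertwines multiplication by $\varphi$ on $D_{\frac{k-1}{2}}(\D)$ with multiplication by $\varphi\circ\tau_k$ on $\HH_1$, and $\tau_k$ maps $\Bd$ onto $\D$, one has $\{\varphi\circ\tau_k:\varphi\in\Mult(D_{\frac{k-1}{2}}(\D))\}=\Mult(\HH_1)$, isometrically. Hence it suffices to show $\Mult(\HH_1)\subseteq\Mult(H^2_d)$ contractively, and by the standard kernel-quotient criterion — if $k_{\HH_b}/k_{\HH_a}$ is a positive kernel then $\Mult(\HH_a)\subseteq\Mult(\HH_b)$ without increase of norm — it is enough that
\[ \frac{k_{H^2_d}(z,z')}{k_{\HH_1}(z,z')}=\frac{1}{g_k(w)\,\bigl(1-\langle z,z'\rangle\bigr)} \]
be a positive semidefinite kernel on $\Bd$. (Alternatively, one may first pull $\Mult(H^2_k)$ into $\Mult(H^2_d)$ along the coordinate projection $\Bd\to\mathbb B_k$, reducing to the case $d=k$.)

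\emph{The kernel positivity.} This is the substantive part. The coefficients $a_s=\binom{sk}{s,\dots,s}$ satisfy $a_s/a_{s-1}=\prod_{j=0}^{k-1}(k-j/s)$, which is non-decreasing in $s$, so by Kaluza's sign criterion $1/g_k(u)=1-\sum_{s\ge1}c_su^s$ with all $c_s\ge0$ (equivalently, $g_k$ is a complete Pick kernel). Put $\mathbf 1_k=(1,\dots,1,0,\dots,0)$. Expanding $\frac{1-\sum_sc_sw^s}{1-\langle z,z'\rangle}$ against the monomials, its coefficient of $z^\delta\overline{z'}^\delta$ equals $\frac{|\delta|!}{\delta!}\bigl(1-\sum_{s=1}^{m(\delta)}c_sP_s(\delta)\bigr)$, where $m(\delta)=\min_{j\le k}\delta_j$ and $P_s(\delta)=\frac{(|\delta|-sk)!}{|\delta|!}\prod_{j=1}^k\frac{\delta_j!}{(\delta_j-s)!}$; since this kernel is diagonal in the monomials, its positivity is equivalent to the scalar inequalities
\[ \sum_{s=1}^{m(\delta)}c_s\,P_s(\delta)\le 1\qquad(\delta\in\N_0^d), \]
i.e. to $\sum_{s\ge1}c_s\,M_{z_1^s\cdots z_k^s}M_{z_1^s\cdots z_k^s}^*\le I$ on $H^2_d$. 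For $\sum_{j\le k}\delta_j$ fixed, $P_s(\delta)$ is non-increasing in each $\delta_j$ with $j>k$, and is a nonnegative multiple of $\prod_{j\le k}\delta_j(\delta_j-1)\cdots(\delta_j-s+1)$, whose logarithm is a separable concave function of $(\delta_1,\dots,\delta_k)$; hence $\sum_sc_sP_s(\delta)$ is Schur-concave in $(\delta_1,\dots,\delta_k)$, so its supremum over $\delta$ is attained at a balanced index. At $\delta=t\mathbf 1_k$ one computes $P_s(t\mathbf 1_k)=a_{t-s}/a_t$, whence $\sum_{s=1}^tc_sP_s(t\mathbf 1_k)=a_t^{-1}\sum_{s=1}^tc_sa_{t-s}=1$ by the recursion $a_t=\sum_{s=1}^tc_sa_{t-s}$ characterizing the $c_s$; the remaining, non-proportional balanced indices give values strictly below $1$. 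This yields the inequality, hence the positivity, hence the lemma.

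The kernel positivity of the previous paragraph is the one real obstacle — specifically, arranging the monomial expansion so that only the balanced indices $\delta=t\mathbf 1_k$ have to be tested (the Schur-concavity/majorization reduction) and recognizing the resulting identity as the convolution recursion for $1/g_k$. The remaining ingredients — Stirling's asymptotics for the multinomial coefficients, and the dictionary relating multipliers, reproducing kernels and kernel quotients — are routine.
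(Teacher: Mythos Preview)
Your boundedness argument via Stirling is correct and is essentially the content of the computation the paper defers to \cite{AHMRSubhomogeneous}. For the multiplier statement the paper gives no argument at all --- it simply invokes Lemma~9.1 and Proposition~9.4 of \cite{AHMRSubhomogeneous} together with the isometric embedding $\Mult(H^2_k)\hookrightarrow\Mult(H^2_d)$ --- so your kernel-quotient approach is genuinely different: you renormalise $D_{(k-1)/2}(\D)$ so that $T_{k,d}$ is unitary onto $\HH_1$, identify its reproducing kernel as $g_k(w)$, apply Kaluza's criterion to obtain $1/g_k(u)=1-\sum_{s\ge1}c_su^s$ with $c_s\ge0$, and then aim to verify the coefficient inequalities $\sum_{s}c_sP_s(\delta)\le1$.

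There is, however, a real gap in the last step. Schur-concavity of $\sum_s c_sP_s(\delta)$ on each simplex $\{|\delta|=n\}$ only tells you that the maximum on that simplex sits at the most balanced \emph{integer} point. When $n=tk$ this is $t\mathbf 1_k$ and you correctly compute the value to be $1$. But for $n\not\equiv0\pmod k$ the most balanced integer point is of the form $((t{+}1)^r,t^{k-r})$, and your sentence ``the remaining, non-proportional balanced indices give values strictly below $1$'' is an assertion, not an argument. Nothing in the Schur-concavity reduction compares values across different simplices, and no computation for these mixed points is offered.

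The gap is fillable by a different monotonicity that does cross simplices. For $\delta\in\N_0^k$ with $\delta_1=\max_j\delta_j>\min_j\delta_j$, one has
\[
\frac{P_s(\delta)}{P_s(\delta-e_1)}=\frac{(|\delta|-sk)\,\delta_1}{|\delta|\,(\delta_1-s)}\le1
\quad\Longleftrightarrow\quad k\delta_1\ge|\delta|,
\]
and the right-hand inequality holds because $\delta_1$ is the maximum. Iterating --- always lowering the current maximum coordinate by one --- drives $\delta$ down to $m(\delta)\mathbf 1_k$ while $P_s$ can only increase, so $\sum_{s=1}^{m(\delta)}c_sP_s(\delta)\le\sum_{s=1}^{m(\delta)}c_sP_s(m(\delta)\mathbf 1_k)=1$ by your convolution identity. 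With this (or an equivalent) argument in place, your proof goes through; without it, the positivity claim is unjustified precisely at the indices where the Schur reduction does not land on a point of the form $t\mathbf 1_k$.
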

If $d=k$, then this is the special case of $s=1$ of \cite{AHMRSubhomogeneous}, Lemma 9.1 and Proposition 9.4. If $d>k$, then we combine this with use of the isometric embedding of $H^2_k$ in $H^2_d$ given by $f\to f\circ P$, where $P$ is the projection from $\C^d\to \C^k, (z_1,\dots, z_d)\to (z_1, \dots, z_k)$. This embedding is also isometric as a map between multiplier algebras $\Mult(H^2_k)\to \Mult(H^2_d)$, see  Lemma 6.2 of \cite{AHMRSubhomogeneous}.

\begin{lemma}\label{Lem:Tkd} Let $1\le k\le d$.

(a) If $f\in D_{\frac{k-1}{2}}(\D)$, then $f$ is cyclic in $D_{\frac{k-1}{2}}(\D)$, if and only if $T_{k,d}f$ is cyclic in $H^2_d$.

(b)  Let $n\in \N$ and $f\in \Mult(D_{\frac{k-1}{2}}(\D))$, then  $f\in  \HC_n(D_{\frac{k-1}{2}}(\D))$,if and only if $T_{k,d}f\in \HC_n(H^2_d)$.

(c) If $f\in D_{\frac{k-1}{2}}(\D)$ is an outer function, then for each $z\in \dB$ the slice function $(T_{k,d}f)_z$ is outer. Here  $(T_{k,d}f)_z(\lambda)=T_{k,d}f(\lambda z)$, $\lambda\in \D$.
\end{lemma}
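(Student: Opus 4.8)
The plan is to use that, by Lemma~\ref{Lem:DiriEmbed}, $T_{k,d}$ is a Banach space isomorphism of $D_{\frac{k-1}{2}}(\D)$ onto its range $\HH_0:=\ran T_{k,d}\subseteq H^2_d$ (it is bounded and bounded below, hence has closed range), and to pair it with the orthogonal projection $Q$ of $H^2_d$ onto $\HH_0$. Since $\tau_k^m=k^{km/2}(z_1\cdots z_k)^m$ and polynomials are dense in $D_{\frac{k-1}{2}}(\D)$, the space $\HH_0$ is the closed linear span of $\{(z_1\cdots z_k)^m:m\ge 0\}$; as the monomials $z^\alpha$ are pairwise orthogonal in $H^2_d$, $Q$ simply keeps the coefficients of the monomials $(z_1\cdots z_k)^m$ and deletes the rest. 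Two facts about $Q$ carry the whole argument. First,
\begin{equation*}
Q(\psi g)=(Q\psi)\,g\qquad\text{whenever }\psi\in\Mult(H^2_d)\text{ and }g\in\HH_0,
\end{equation*}
because in the monomial expansion of $\psi g$ a monomial $(z_1\cdots z_k)^n$ can only arise by multiplying a monomial $(z_1\cdots z_k)^j$ of $\psi$ into $g$ (as $g$ contains no other monomials). Second, applying this with $g$ ranging over $\HH_0$ and using that $Q$ has norm one and that $M_{Q\psi}$ maps $\HH_0$ into itself, $Q\psi$ is a multiplier of $\HH_0$ with $\|M_{Q\psi}\|\le\|\psi\|_{\Mult(H^2_d)}$; conjugating by $T_{k,d}$ shows $\varphi:=T_{k,d}^{-1}(Q\psi)\in\Mult(D_{\frac{k-1}{2}}(\D))$. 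We will also use, both from Lemma~\ref{Lem:DiriEmbed}, the multiplicativity $T_{k,d}(\varphi\chi)=(T_{k,d}\varphi)(T_{k,d}\chi)$ and that $T_{k,d}\varphi\in\Mult(H^2_d)$ when $\varphi\in\Mult(D_{\frac{k-1}{2}}(\D))$.

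I treat (a) and (b) uniformly, writing $f^0:=1$ and recalling that $1$ is cyclic in each of $D_{\frac{k-1}{2}}(\D)$ and $H^2_d$ (both are radially weighted Besov spaces), so that the case $n=0$ of ``$[f^n]=[f^{n+1}]$'' says precisely that $f$ is cyclic. For the forward implication, if $\varphi_j f^{n+1}\to f^n$ in $D_{\frac{k-1}{2}}(\D)$ with $\varphi_j\in\Mult(D_{\frac{k-1}{2}}(\D))$, then applying the bounded operator $T_{k,d}$ and using multiplicativity gives $(T_{k,d}\varphi_j)(T_{k,d}f)^{n+1}\to(T_{k,d}f)^n$ in $H^2_d$ with $T_{k,d}\varphi_j\in\Mult(H^2_d)$; with the trivial reverse inclusion this is $[(T_{k,d}f)^n]=[(T_{k,d}f)^{n+1}]$ in $H^2_d$. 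For the converse, suppose $\psi_j(T_{k,d}f)^{n+1}\to(T_{k,d}f)^n$ in $H^2_d$ with $\psi_j\in\Mult(H^2_d)$. Since $(T_{k,d}f)^m=T_{k,d}(f^m)\in\HH_0$ for every $m$, applying $Q$ and using the identity above gives $T_{k,d}(\varphi_j f^{n+1})=(Q\psi_j)(T_{k,d}f)^{n+1}=Q\big(\psi_j(T_{k,d}f)^{n+1}\big)\to Q\big((T_{k,d}f)^n\big)=T_{k,d}(f^n)$, with $\varphi_j\in\Mult(D_{\frac{k-1}{2}}(\D))$ as above; since $T_{k,d}$ is bounded below, $\varphi_j f^{n+1}\to f^n$ in $D_{\frac{k-1}{2}}(\D)$, so $[f^n]=[f^{n+1}]$ there. (The nonvanishing condition in $\HC_n$ passes across because $T_{k,d}$ is injective.)

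For (c), fix $z\in\dB$. The slice $(T_{k,d}f)_z$ lies in $H^2(\D)$, being a slice of $T_{k,d}f\in H^2_d$, and
\begin{equation*}
(T_{k,d}f)_z(\lambda)=f\big(\tau_k(\lambda z)\big)=f(c\lambda^k),\qquad c:=k^{k/2}z_1\cdots z_k,
\end{equation*}
where $|c|\le 1$ by the geometric--arithmetic mean inequality used to define $\tau_k$. An outer function has no zeros in $\D$, so neither does $(T_{k,d}f)_z$, and its inner factor has no Blaschke part. If $|c|<1$ (including $c=0$), then $\lambda\mapsto c\lambda^k$ maps $\overline{\D}$ into a compact subset of $\D$, so $(T_{k,d}f)_z$ extends analytically to a neighborhood of $\overline{\D}$; its inner factor is then a finite Blaschke product, hence trivial, so $(T_{k,d}f)_z$ is outer. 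If $|c|=1$, then $\lambda\mapsto c\lambda^k$ is inner, and the composition of an outer function with an inner function is outer; indeed an inner function $B$ pushes normalized arclength on $\partial\D$ forward to harmonic measure at $B(0)$, so $\frac1{2\pi}\int_0^{2\pi}\log|f(B(e^{it}))|\,dt=\log|f(B(0))|=\log|(f\circ B)(0)|$, and we have already excluded a Blaschke factor. Hence $(T_{k,d}f)_z$ is outer in all cases.

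The genuine obstacle is the converse half of (a) and (b): a multiplier of $H^2_d$ witnessing $[(T_{k,d}f)^n]=[(T_{k,d}f)^{n+1}]$ need not be a function of $z_1\cdots z_k$ alone, and so cannot be pulled back through $T_{k,d}$ directly. The projection $Q$ resolves this, but only after one verifies the compatibility $Q(\psi g)=(Q\psi)g$ on $\HH_0$ \emph{and} that $Q$ returns multipliers to multipliers; these are the load-bearing computations. In (c) the only delicate point is the boundary case $|c(z)|=1$, which forces one to invoke the stability of outerness under precomposition with an inner function, rather than the softer analytic-continuation argument available when $|c(z)|<1$.
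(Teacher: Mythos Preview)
Your proof is correct and takes essentially the same approach as the paper: for (a) and (b) the projection $Q$ onto $\HH_0=\overline{\operatorname{span}}\{(z_1\cdots z_k)^m\}$ together with the identity $Q(\psi g)=(Q\psi)g$ is precisely the paper's projection $P_k$ and its orthogonality relation $(q_j-P_kq_j)T_{k,d}(f^{n+1})\perp \HH_0$, just packaged more abstractly. For (c) your case split (analytic continuation when $|c|<1$, the inner--outer composition fact when $|c|=1$) unwinds to the paper's one-line computation $\int_0^{2\pi}\log|f(e^{ikt}\tau_k(z))|\,\tfrac{dt}{2\pi}=\log|f(0)|$, which handles both cases at once via the mean value property and the outer condition.
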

\begin{proof} We prove (b) and (c). The proof of (a) is similar to (b).

(b) Fix $n\in \N$ and  $f\in \Mult(D_{\frac{k-1}{2}}(\D))$. Then by Lemma \ref{Lem:DiriEmbed} $T_{k,d}f\in \Mult(H^2_d)$.
If   $f\in  \HC_n(D_{\frac{k-1}{2}}(\D))$, then there is a sequence of polynomials $\{p_j\}$ such that $p_jf^{n+1}\to f^n$ in $D_{\frac{k-1}{2}}(\D)$. Then  for each $j$ we have  $q_j=T_{k,d}p_j$ is a polynomial and by  Lemma \ref{Lem:DiriEmbed} $(T_{k,d}p_j) (T_{k,d}f)^{n+1}=T_{k,d}(p_jf^{n+1})\to T_{k,d}(f^n)=(T_{k,d}f)^n$ in $H^2_d$. Hence $T_{k,d}f\in \HC_n(H^2_d)$.

Conversely, if $T_{k,d}f\in \HC_n(H^2_d)$, then there are polynomials $q_j\in \Mult(H^2_d)$ such that $q_j (T_{k,d}f)^{n+1} \to (T_{k,d}f)^n$ in $H^2_d$.

For $n\in \N_0$ define $\alpha_n=(n,\dots,n,0,\dots,0)\in \N_0^d,$ where the first $k$ components of $\alpha_n$ equal $n$ and the remaining components are 0.
If $q_j(z)=\sum_{\alpha}\hat{q}_j(\alpha)z^{\alpha}$, then let $P_kq_j(z)=\sum_{n\ge 0}\hat{q}_j(\alpha_n) \prod_{k=1}^d z_k^n$. Note that $P_kq_j=T_{k,d}p_j$ for some polynomial $p_j$ and that
$$(q_j-P_kq_j)T_{k,d}(f^{n+1})\perp (P_kq_j) T_{k,d}(f^{n+1})-T_{k,d}(f^n)$$ by the orthogonality of the monomials in $H^2_d$. Hence
\begin{align*}
\|p_jf^{n+1}-f^n\|^2_{D_{(k-1)/2}} &\approx \|T_{k,d}(p_jf^{n+1})-T_{k,d}(f^n)\|^2_{H^2_d}\\
&=\|(P_kq_j) T_{k,d}(f^{n+1})-T_{k,d}(f^n)\|^2_{H^2_d}\\
&\le \|(q_j-P_kq_j)T_{k,d}(f^{n+1})\|^2 + \|(P_kq_j) T_{k,d}(f^{n+1})-T_{k,d}(f^n)\|^2_{H^2_d}\\
&= \|q_j (T_{k,d}f)^{n+1}-(T_{k,d}f)^n\|^2_{H^2_d} \to 0
\end{align*}
as $j\to \infty$.  Thus, $f\in \HC_n(D_{(k-1)/2})$.

(c) If  $f\in D_{\frac{k-1}{2}}(\D)$ is an outer function, then $\log |f(0)|= \int_0^{2\pi} \log|f(e^{it})|\frac{dt}{2\pi}$. Let $z\in \dB$, then $(T_{k,d}f)_z(e^{it})= f(e^{ikt} \tau_k(z))$ and hence
\begin{align*}\int_0^{2\pi} \log|(T_{k,d}f)_z(e^{it})|\frac{dt}{2\pi}&= \int_0^{2\pi} \log|f(e^{ikt}\tau_k(z))|\frac{dt}{2\pi}\\&=\log|f(0)|=\log|(T_{k,d}f)_z(0)|.\end{align*}
Hence $(T_{k,d}f)_z$ is outer.
\end{proof}
\begin{example} \label{Exa:2dim} If $1\le k\le 3$, $f(z)=1-z$, then $T_{k,d}f$ is cyclic in $H^2_d$. Furthermore, the set $Z(T_{k,d}f)\cap \dB$ equals an embedded $k-1$ dimensional cube.
\end{example}
It is well-known that $f$ is cyclic in $D_\alpha(\D)$ for $\alpha \le 1$, see \cite{BrSh1984}. It also follows from the second part of Theorem \ref{PolyCn}, which will be proved later. Hence the cyclicity of $T_{k,d}f$ follows from Lemma \ref{Lem:Tkd} (a). The statement about the zero set is also easily seen. For example, if $k=3$, then $$Z(T_{k,d}f)\cap \dB=\{ 3^{-1/2}(e^{it},e^{is},e^{-i(t+s)},0,\dots, 0): t,s\in [0,2\pi]\}.$$

Similarly one sees that for $k\ge 4$ the set $Z(T_{k,d}f)\cap \dB$ embeds a cube of dimension $k-1\ge 3$, hence it will follow from Theorem \ref{thm:Noncyclic} that $T_{k,d}f$ is not cyclic for any $k\ge 4$. Alternatively, that  will also follow from part (b) of the following proposition.
\begin{prop}\label{EmbedExamples} (a) If $d\ge 2$, then there is non-cyclic $f\in H^2_d$ such that every slice function $f_z$ is outer in $H^2(\D)$.

(b) If $d\ge k\ge 4n>0$ and $p(z)=1-z$, then the  polynomial $T_{k,d} p\notin  \HC_{n-1}(H^2_d)$. Hence $\C_{\stable}[z] \nsubseteq \HC_{n-1}(H^2_d)$.
\end{prop}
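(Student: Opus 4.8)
The plan is to derive both statements from the one-variable theory of the spaces $D_\alpha(\D)$, transported to $H^2_d$ through the embedding operators $T_{k,d}$ of Lemma~\ref{Lem:DiriEmbed} and their properties recorded in Lemma~\ref{Lem:Tkd}.

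For (a) I would take $k=2$, which is legitimate since $d\ge 2$, so that $T_{2,d}$ maps $D_{1/2}(\D)$ boundedly and boundedly below into $H^2_d$ and carries multipliers to multipliers. The one external input I would invoke is the classical fact that for every $\alpha$ with $0<\alpha\le 1$ the space $D_\alpha(\D)$ contains outer functions that are not cyclic; for instance one may take $f$ outer whose boundary zero set $Z(f)\cap\T$ has positive $\alpha$-capacity but vanishing Lebesgue measure, since a cyclic function in $D_\alpha(\D)$ must have a boundary zero set of zero $\alpha$-capacity, see \cite{BrSh1984, ElFallahKellayMashreghiRansfordPrimer}. Fixing such an $f\in D_{1/2}(\D)$, $f\ne 0$, the function $T_{2,d}f\in H^2_d$ is nonzero (as $T_{2,d}$ is bounded below), it is not cyclic by Lemma~\ref{Lem:Tkd}(a), and each of its slices is outer by Lemma~\ref{Lem:Tkd}(c); this is precisely the assertion of (a). The only non-routine point here is locating a reference that covers $D_{1/2}(\D)$ specifically, the Dirichlet case $\alpha=1$ being the best-known.

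For (b) I would set $\alpha=(k-1)/2$ and $p(z)=1-z$, and reduce, via Lemma~\ref{Lem:Tkd}(b) (or part (a) when $n=1$), to showing that $p\notin\HC_{n-1}(D_\alpha(\D))$, that is, $[(1-z)^{n-1}]\ne[(1-z)^n]$ in $D_\alpha(\D)$, whenever $k\ge 4n$. The guiding observation is that $k\ge 4n$ is equivalent to $\alpha>2n-1$, and this is precisely the threshold, noted in Section~\ref{sec:prelim}, below which the functionals $f\mapsto f^{(j)}(1)$, $0\le j\le n-1$, are bounded on $D_\alpha(\D)$: for $\alpha>1$ one has $D_\alpha(\D)\subseteq A(\D)$, and from $f\in D_\alpha(\D)\Leftrightarrow Rf\in D_{\alpha-2}(\D)$ one gets by iteration that $f,Rf,\dots,R^{n-1}f$ all have bounded evaluation at $z=1$ once $\alpha-2(n-1)>1$, which in one variable amounts to boundedness of $f\mapsto f^{(j)}(1)$ for $0\le j\le n-1$. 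Next, for $g\in[(1-z)^n]$, write $g=\lim_k\psi_k(1-z)^n$ in $D_\alpha(\D)$ with $\psi_k\in\Mult(D_\alpha(\D))$; since $\psi_k\in D_\alpha(\D)$, the regularity chain $R^j\psi_k\in D_{\alpha-2j}(\D)\subseteq A(\D)$ for $0\le j\le n-1$ (using $\alpha-2j\ge\alpha-2(n-1)>1$), together with the triangular relation expressing $R^j\psi_k$ in terms of $z\psi_k',\dots,z^j\psi_k^{(j)}$, with $z^j\psi_k^{(j)}$ appearing with coefficient $1$, shows inductively that $\psi_k',\dots,\psi_k^{(n-1)}$ extend continuously up to $z=1$. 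The Leibniz rule then forces every derivative of $\psi_k(1-z)^n$ of order $\le n-1$ to vanish at $z=1$, since each summand carries a positive power of $1-z$; passing to the limit with the bounded functionals above gives $g^{(j)}(1)=0$ for $0\le j\le n-1$. Because $\frac{d^{\,n-1}}{dz^{n-1}}(1-z)^{n-1}\big|_{z=1}=(-1)^{n-1}(n-1)!\ne 0$, the function $(1-z)^{n-1}$ does not lie in $[(1-z)^n]$; as $(1-z)^n\in[(1-z)^{n-1}]$ trivially, this yields $p\notin\HC_{n-1}(D_\alpha(\D))$, hence $T_{k,d}p\notin\HC_{n-1}(H^2_d)$ by Lemma~\ref{Lem:Tkd}. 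Finally $T_{k,d}p=1-k^{k/2}z_1\cdots z_k$ has no zeros in $\Bd$ by the geometric-arithmetic mean inequality preceding Lemma~\ref{Lem:DiriEmbed}, so $T_{k,d}p\in\C_{\stable}[z]$ and therefore $\C_{\stable}[z]\nsubseteq\HC_{n-1}(H^2_d)$.

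The step I expect to require the most care is the boundary regularity used in (b): justifying that a multiplier $\psi$ of $D_\alpha(\D)$ with $\alpha>2n-1$ has $\psi',\dots,\psi^{(n-1)}$ continuous up to the boundary point $z=1$, so that the Leibniz computation of the orders of vanishing of $\psi(1-z)^n$ at $z=1$ is rigorous. Everything else is bookkeeping, and it is worth recording that the inequality $\alpha-2(n-1)>1$ driving the argument is exactly the hypothesis $k\ge 4n$ rewritten, which is consistent with the sharpness of Theorem~\ref{PolyCn} for $d=2$ and $d=4$ discussed above.
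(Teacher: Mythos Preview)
Your proof is correct and matches the paper's approach: both parts transfer one-variable facts about $D_{(k-1)/2}(\D)$ to $H^2_d$ via Lemma~\ref{Lem:Tkd}, using in (a) a non-cyclic outer function in $D_{1/2}(\D)$ built from a Carleson set of positive $1/2$-capacity, and in (b) the boundedness of $f\mapsto f^{(n-1)}(1)$ on $D_\alpha(\D)$ for $\alpha>2n-1$ to separate $(1-z)^{n-1}$ from $[(1-z)^n]$. The boundary-regularity step you flag as delicate in (b) can be bypassed entirely: since polynomials are dense in $D_\alpha(\D)$ and $(1-z)^n$ is a multiplier, polynomial multiples $p(z)(1-z)^n$ are already dense in $[(1-z)^n]$, and for those the Leibniz identity gives $\bigl(p(1-z)^n\bigr)^{(n-1)}(1)=0$ directly; continuity of the bounded functional then propagates this to the closure, which is how the paper (implicitly) argues.
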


\begin{proof} (a)  By Lemma \ref{Lem:Tkd} it will be enough to show that there is a noncyclic outer function $f\in D_{1/2}(\D)$, because then $T_{2,d}f$ will be the required example. That is known, we quickly give an overview of the idea. The proof uses Carleson sets and $\alpha$-capacity for $\alpha=1/2$. Indeed, using Theorem 3 of Section IV of \cite{CarlesonBook} one constructs a "generalized Cantor" set $E$ with positive $1/2$-capacity (also see \cite{ElFKRans}, Section 4). Generalized Cantor sets are Carleson sets. Thus, by results of Carleson  for any $n\in \N$ there is an outer function $f\in C^n(\overline{\D})\cap \Hol(\D)$ such that $f=0$ on $E$ (\cite{CarlesonUniquenessSets}). Then $f$ is not cyclic in $D_{1/2}$, see e.g. \cite{ElFKRans} Theorem 1.1.

(b) If  $d\ge k\ge 4n>0$, then $\frac{k-1}{2}>2n-1$. Then, as noted in the Introduction, it follows that $f\to f^{(j)}(z)$ defines a bounded linear functional on $D_{(k-1)/2}(\D)$ for all $z\in \partial \D$ and all $j=0, 1, \dots, n-1$. But then the functional of evaluation of the $n-1$-derivative at 1 annihilates every function in $[(1-z)^n]$, but it does not annihilate $(1-z)^{n-1}$. Hence $(1-z)^{n-1}\notin [(1-z)^n]$. This implies that $1-z\notin \HC_{n-1}(D_{(k-1)/2}(\D))$. Then part (b) of the proposition follows from Lemma \ref{Lem:Tkd}.
\end{proof}

The second way to embed $D_{(k-1)/2}$ in $H^2_d$ is given by the following lemma.

\begin{lemma}\label{Lem:2ndDiriEmbedding}Let $1\le k\le d$. Then the operator $S_k: D_{\frac{k-1}{2}}(\D)\to H^2_d, S_kf(z)=f(\sum_{j=1}^k z_j^2)$ is bounded and bounded below. \end{lemma}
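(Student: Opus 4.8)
The plan is to view $S_k$ as the composition operator $S_kf=f\circ\phi$ with $\phi(z)=\sum_{j=1}^k z_j^2$, and to reduce both boundedness and boundedness below to a single two-sided estimate for the Drury--Arveson norms of the homogeneous polynomials $q_n:=\phi^n$. First I would note that $\phi$ maps $\Bd$ into $\D$, since $|\phi(z)|\le\sum_{j=1}^k|z_j|^2\le\|z\|^2<1$, so $S_kf$ is analytic on $\Bd$ for every $f\in\Hol(\D)$. Writing $f(w)=\sum_{n\ge0}\hat f(n)w^n$ and composing (with convergence locally uniform on $\Bd$), one gets $S_kf=\sum_{n\ge0}\hat f(n)q_n$, and since each $q_n$ is homogeneous of degree $2n$ the terms are the distinct homogeneous parts of $S_kf$. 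Orthogonality of distinct homogeneous parts in $H^2_d$ then yields that $S_kf\in H^2_d$ precisely when $\sum_n|\hat f(n)|^2\|q_n\|^2_{H^2_d}<\infty$, and in that case $\|S_kf\|^2_{H^2_d}=\sum_n|\hat f(n)|^2\|q_n\|^2_{H^2_d}$. Comparing with $\|f\|^2_{D_{(k-1)/2}(\D)}=\sum_n(n+1)^{(k-1)/2}|\hat f(n)|^2$, the lemma reduces to showing
\[
\|q_n\|^2_{H^2_d}\approx(n+1)^{(k-1)/2}
\]
with constants independent of $n$.

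To get this estimate I would expand by the multinomial theorem, $q_n=\sum_{\beta\in\N_0^k,\ |\beta|=n}\tfrac{n!}{\beta!}\,z_1^{2\beta_1}\cdots z_k^{2\beta_k}$, and use $\|z^\alpha\|^2_{H^2_d}=\alpha!/|\alpha|!$ together with orthogonality of the monomials to obtain
\[
\|q_n\|^2_{H^2_d}=\frac{(n!)^2}{(2n)!}\sum_{|\beta|=n}\prod_{j=1}^k\binom{2\beta_j}{\beta_j}.
\]
The inner sum is the coefficient of $x^n$ in $\bigl(\sum_{m\ge0}\binom{2m}{m}x^m\bigr)^k=(1-4x)^{-k/2}$, hence equals $4^n(k/2)_n/n!$ with $(a)_n=a(a+1)\cdots(a+n-1)$; so $\|q_n\|^2_{H^2_d}=\binom{2n}{n}^{-1}4^n(k/2)_n/n!$. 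Stirling's formula gives $\binom{2n}{n}\sim 4^n/\sqrt{\pi n}$ and $(k/2)_n/n!=\Gamma(n+k/2)/(\Gamma(k/2)\Gamma(n+1))\sim n^{k/2-1}/\Gamma(k/2)$, whence $\|q_n\|^2_{H^2_d}\sim\sqrt{\pi}\,n^{(k-1)/2}/\Gamma(k/2)$. Since each $\|q_n\|^2_{H^2_d}$ is finite and strictly positive and the ratio $\|q_n\|^2_{H^2_d}/(n+1)^{(k-1)/2}$ tends to a strictly positive limit, that ratio is bounded above and below by positive constants over all $n\ge0$, which is exactly the desired estimate; boundedness and boundedness below of $S_k$ follow immediately.

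I do not expect a genuine obstacle here: the only real content is the closed-form evaluation $\|q_n\|^2_{H^2_d}=\binom{2n}{n}^{-1}4^n(k/2)_n/n!$ via the generating function $(1-4x)^{-k/2}$, after which everything is a routine Stirling estimate; the reason to push through the exact formula rather than a crude bound is to secure the uniformity of the constants. As a consistency check, for $k=1$ one has $q_n=z_1^{2n}$, $\|q_n\|^2_{H^2_d}=1=(n+1)^0$, so $S_1\colon H^2(\D)\to H^2_d$ is in fact isometric, matching $D_0(\D)=H^2(\D)$.
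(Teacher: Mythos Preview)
Your proof is correct, and the setup is the same as the paper's: both expand $\phi^n$ by the multinomial theorem, compute
\[
\|q_n\|^2_{H^2_d}=\frac{(n!)^2}{(2n)!}\sum_{|\beta|=n}\frac{(2\beta)!}{(\beta!)^2},
\]
and reduce the lemma to showing this quantity is $\approx (n+1)^{(k-1)/2}$. The difference is in how that estimate is established. The paper argues by induction on $k$: the base case $k=2$ is done via Stirling's formula and an elementary bound on $\sum_{1\le j\le n/2}j^{-1/2}$, and the inductive step passes from $k$ to $k+1$ by splitting off one index and using a symmetrization trick to control the resulting weighted sum. Your route is more direct: you recognize $\sum_{|\beta|=n}\prod_j\binom{2\beta_j}{\beta_j}$ as the coefficient of $x^n$ in $\bigl(\sum_m\binom{2m}{m}x^m\bigr)^k=(1-4x)^{-k/2}$, which yields the exact closed form $\|q_n\|^2_{H^2_d}=\binom{2n}{n}^{-1}4^n(k/2)_n/n!$, after which a single application of Stirling finishes the job. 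Your argument is shorter and even identifies the asymptotic constant $\sqrt{\pi}/\Gamma(k/2)$; the paper's induction is more hands-on and avoids appealing to the generating function identity, but at the cost of a longer computation that only delivers the two-sided bound.
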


\begin{proof} As above, using the embeeding $H^2_k\subseteq H^2_d$ it suffices to prove the case $k=d$.
 If $f(\lambda)=\sum_{n=0}^\infty a_n\lambda^n$, then $$S_df(z)=\sum_{n=0}^\infty a_n (\sum_{j=1}^d z_j^2)^n=\sum_{n=0}^\infty a_n \sum_{|\alpha|=n}\frac{|\alpha|!}{\alpha!}z^{(2\alpha)}.$$ Then
$$\|S_df\|^2_{H^2_d}=\sum_{n=0}^\infty |a_n|^2 \frac{(n!)^2}{(2n)!} \sum_{|\alpha|=n} \frac{(2\alpha)!}{(\alpha!)^2 }.$$ Thus we have to prove that $$(n+1)^{(d-1)/2} \approx \frac{(n!)^2}{(2n)!} \sum_{|\alpha|=n} \frac{(2\alpha)!}{(\alpha!)^2 },$$ where the implied constants may depend on $d$, but not on $n$. For $n=0$ we have equality, so it will be enough to consider $n\ge 1$.
We will prove the statement by induction on $d$. For $d=1$ the statement holds with equality, and we will also explicitly verify the case $d=2$. Note by Stirling's formula we have that $(n!)^2/(2n)! \approx \sqrt{n}/2^{2n}$ for $n \geq 1$. Then
\begin{align*}\frac{(n!)^2}{(2n)!} \sum_{|\alpha|=n} \frac{(2\alpha)!}{(\alpha!)^2 }&=\frac{(n!)^2}{(2n)!} \sum_{k=0}^n \frac{(2k)!}{(k!)^2}\frac{(2(n-k))!}{ ((n-k)!)^2 }\\
&\approx 2+ \frac{\sqrt{n}}{2^{2n}}\sum_{k=1}^{n-1} \frac{(2k)!}{(k!)^2}\frac{(2(n-k))!}{ ((n-k)!)^2 }\\
&\approx 2+ \sqrt{n} \sum_{k=1}^{n-1} \frac{1}{\sqrt{k}\sqrt{n-k}}\\
 & \approx \sqrt{n}\sum_{1 \leq k \leq n/2} \frac{1}{\sqrt{k} \sqrt{n-k}} \\
&\approx \sum_{1 \leq k \leq n/2} \frac{1}{\sqrt{k}} \approx \sqrt{n}.
\end{align*}

Now assume that $d\ge 2$ and the statement holds for $d$. We will show that it also holds for $d+1$. Note that for each $n\ge 0$ we have
$$\{\beta\in \N_0^{d+1}: |\beta|=n\}=\{(\alpha,n-|\alpha|): \alpha\in  \N_0^{d}, 0\le |\alpha|\le n\}$$ and hence by the induction hypothesis
\begin{align*}
\frac{(n!)^2}{(2n)!} \sum_{\beta\in N_0^{d+1},|\beta|=n} \frac{(2\beta)!}{(\beta!)^2 }&=\frac{(n!)^2}{(2n)!}\sum_{k=0}^n \sum_{\alpha\in \N_0^d,|\alpha|=k} \frac{(2\alpha)!(2(n-k))!}{(\alpha!)^2 ((n-k)!)^2 } \\
&\approx \frac{(n!)^2}{(2n)!}\sum_{k=0}^n\frac{(2k)!}{(k!)^2}\frac{(2(n-k))!}{ ((n-k)!)^2 }(k+1)^{(d-1)/2}\\
&\le (n+1)^{(d-1)/2} \frac{(n!)^2}{(2n)!}\sum_{k=0}^n\frac{(2k)!}{(k!)^2}\frac{(2(n-k))!}{ ((n-k)!)^2 }\\
&\approx (n+1)^{(d-1)/2} \sqrt{n}\le   (n+1)^{d/2} \ \ \text{by the $d=2$ case.}
\end{align*}
 Thus, we have the required upper bound. For the lower bound note that by symmetry
\begin{align*}2\sum_{k=0}^n\frac{(2k)!}{(k!)^2}\frac{(2(n-k))!}{ ((n-k)!)^2 }&(k+1)^{(d-1)/2}\\
&=\sum_{k=0}^n\frac{(2k)!}{(k!)^2}\frac{(2(n-k))!}{ ((n-k)!)^2 }((k+1)^{(d-1)/2}+(n-k+1)^{(d-1)/2})\\
&\ge \sum_{k=0}^n\frac{(2k)!}{(k!)^2}\frac{(2(n-k))!}{ ((n-k)!)^2 } (\frac{n}{2}+1)^{(d-1)/2},
\end{align*}
and now we can substitute this into the previous formula and obtain the lower bound with a similar calculation as before.
\end{proof}

 \begin{example}\label{Exa:2dim2} If $d\ge 3$, then $p(z)=1-(z_1^2+z_2^2+z_3^2)$ is cyclic in $H^2_d$ and $Z(p)\cap \dB$ is a  2-dimensional cube.\end{example} Again we note  that  for $k=3$ we have $D_{(k-1)/2}(\D)=D$ is the classical Dirichlet space and the polynomial $1-z$ is cyclic in $D$. Hence the  statement follows from Lemma \ref{Lem:2ndDiriEmbedding} with arguments that are analogous to the proof of Lemma \ref{Lem:Tkd} (a) by use of Lemma \ref{Lem:DiriEmbed}. In this case
 $$Z(p)\cap \dB=\{(\cos t \cos s,\cos t \sin s, \sin t,0, \dots, 0): t,s\in [0,2\pi]\}.$$

To complete this section, we will follow the ideas of Brown and Shields \cite{BrSh1984} (see also \cite{AlanSola} for a $\mathbb{B}_2$-version) to obtain a necessary condition for cyclicity in $H^2_d$, proving that the zero set of a cyclic function $f \in H^2_d \cap C(\overline{\mathbb{B}_d})$ cannot embed a 3-dimensional cube. To prove this we want to construct bounded linear functionals of the form
 $$f\to\int_{\partial\mathbb{B}_d} fd\mu,\quad  f\in H_d^2\cap C(\overline{\mathbb{B}_d}),$$
 for appropriate finite Borel measures $\mu$ on $\partial\mathbb{B}_d$, such that these functionals annihilate nontrivial  multiplier-invariant subspaces. The argument applies directly to  Hilbert function spaces whose kernel has the form $(1-\langle z,w\rangle)^{-\alpha}$, but here we shall focus on $H_d^2$.
 \begin{lemma}\label{large-zero-set} Let $\mu$ be a finite Borel measure on  $\partial\mathbb{B}_d$ and
 	$$f_\mu(z)=\int\frac1{1-\langle z,w\rangle} d\mu(w),\quad z\in \mathbb{B}_d.$$
 	(i) If
 	\begin{equation}\label{energy_condition} E(\mu)=\int\int\frac1{|1-\langle z,w\rangle|} d\mu(w)d\mu(z)<\infty,\end{equation}
 	then $f_\mu\in H_d^2$ and 	$\|f_\mu\|_{H_d^2}^2\le E(\mu)$.\\
 	(ii) If $E(\mu)<\infty$ and $f\in H_d^2\cap C(\overline{\mathbb{B}_d})$, then
 	$$\langle f,f_\mu\rangle_{H_d^2}=\int fd\mu.$$
 \end{lemma}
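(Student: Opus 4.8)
The plan is to exploit that $(1-\langle z,w\rangle)^{-1}$ is exactly the reproducing kernel $k_w$ of $H^2_d$, so that $f_\mu=\int k_w\,d\mu(w)$ is a continuous superposition of kernels whose Taylor coefficients are the $\mu$-averages of those of $k_w$. First I would record the absolutely convergent monomial expansion $(1-\langle z,w\rangle)^{-1}=\sum_\alpha \frac{|\alpha|!}{\alpha!}z^\alpha\bar w^\alpha$, valid for $z\in\mathbb{B}_d$ and $w\in\overline{\mathbb{B}_d}$ because $\sum_\alpha \frac{|\alpha|!}{\alpha!}|z^\alpha\bar w^\alpha|=(1-\sum_j|z_j||w_j|)^{-1}\le (1-|z|)^{-1}$ by Cauchy--Schwarz. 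Integrating against $\mu$ and interchanging sum and integral (legitimate for fixed $z\in\mathbb{B}_d$ since the last bound is $\mu$-integrable) identifies $\widehat{f_\mu}(\alpha)=\frac{|\alpha|!}{\alpha!}\int\bar w^\alpha\,d\mu(w)$, and hence, using $\|g\|_{H^2_d}^2=\sum_\alpha\frac{\alpha!}{|\alpha|!}|\widehat g(\alpha)|^2$,
\[
\|f_\mu\|_{H^2_d}^2=\sum_{\alpha}\frac{|\alpha|!}{\alpha!}\,\Bigl|\int\bar w^\alpha\,d\mu(w)\Bigr|^2\in[0,\infty].
\]

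For part (i) the remaining task is to bound this sum by $E(\mu)$, and the idea is to regularize with a factor $r^{2|\alpha|}$. By monotone convergence the sum equals $\lim_{r\to 1^-}\sum_\alpha \frac{|\alpha|!}{\alpha!}r^{2|\alpha|}\bigl|\int\bar w^\alpha\,d\mu\bigr|^2$, and for fixed $r<1$ Fubini applies --- the key bound being $\sum_{|\alpha|=n}\frac{n!}{\alpha!}|w^\alpha z^\alpha|=(\sum_j|w_j||z_j|)^n\le 1$ for $w,z\in\overline{\mathbb{B}_d}$, so the double series summed first is $\le (1-r^2)^{-1}$ --- and collapses the sum to $\iint (1-r^2\langle z,w\rangle)^{-1}\,d\mu(w)\,d\mu(z)$. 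Then I would let $r\to 1$ inside the integral by dominated convergence, using the elementary inequality $|1-\zeta|\le 2\,|1-r^2\zeta|$ for all $\zeta\in\overline{\mathbb{D}}$ and $r\in(0,1)$, which follows from $1-\zeta=(1-r^2\zeta)-(1-r^2)\zeta$ together with $|1-r^2\zeta|\ge \operatorname{Re}(1-r^2\zeta)\ge 1-r^2$; this produces the $\mu\times\mu$-integrable majorant $2\,|1-\langle z,w\rangle|^{-1}$, and since $E(\mu)<\infty$ forces $\mu$ to be atomless, $\langle z,w\rangle=1$ only on a $\mu\times\mu$-null set and the integrands converge a.e. The upshot is $\|f_\mu\|_{H^2_d}^2=\iint (1-\langle z,w\rangle)^{-1}\,d\mu(w)\,d\mu(z)$, whose modulus is at most $E(\mu)$; in particular $f_\mu\in H^2_d$.

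For part (ii) I would first verify the identity on monomials: since the coefficient sequence of $z^\alpha$ is the indicator of $\{\alpha\}$, one gets $\langle z^\alpha,f_\mu\rangle_{H^2_d}=\frac{\alpha!}{|\alpha|!}\,\overline{\widehat{f_\mu}(\alpha)}=\overline{\int\bar w^\alpha\,d\mu(w)}=\int w^\alpha\,d\mu(w)$, hence $\langle q,f_\mu\rangle_{H^2_d}=\int q\,d\mu$ for every polynomial $q$. To upgrade this to a general $f\in H^2_d\cap C(\overline{\mathbb{B}_d})$, I would use the dilates $f_t(z)=f(tz)$, $0<t<1$: each $f_t$ is holomorphic on a neighbourhood of $\overline{\mathbb{B}_d}$, so its homogeneous Taylor partial sums converge to $f_t$ both uniformly on $\overline{\mathbb{B}_d}$ and in $H^2_d$. (Uniform convergence is the classical estimate $\|(f_t)_n\|_{L^\infty(\overline{\mathbb{B}_d})}\le \rho^{-n}\sup_{\rho\overline{\mathbb{B}_d}}|f_t|$ for $1<\rho<1/t$; for convergence in $H^2_d$ combine it with $\|(f_t)_n\|_{H^2_d}^2=\binom{n+d-1}{d-1}\|(f_t)_n\|_{H^2(\partial\mathbb{B}_d)}^2\le \binom{n+d-1}{d-1}\|(f_t)_n\|_{L^\infty}^2$, which still decays geometrically.) Applying the monomial identity term by term gives $\langle f_t,f_\mu\rangle_{H^2_d}=\int f_t\,d\mu$, and finally $f_t\to f$ in $H^2_d$ (dominated convergence on the Taylor coefficients, dominated by $\|f\|_{H^2_d}^2$) and $f_t\to f$ uniformly on $\overline{\mathbb{B}_d}$ (uniform continuity of $f$), so letting $t\to 1$ yields $\langle f,f_\mu\rangle_{H^2_d}=\int f\,d\mu$.

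The only genuinely delicate point is the diagonal behaviour in part (i): the kernel $|1-\langle z,w\rangle|^{-1}$ is singular precisely on $\{(w,w):w\in\partial\mathbb{B}_d\}$, so neither the Fubini interchange nor the limit $r\to1$ may be carried out before regularizing; the argument really hinges on the convergence factor $r^{2|\alpha|}$ together with the uniform comparison $|1-r^2\zeta|\ge\tfrac12|1-\zeta|$ on $\overline{\mathbb{D}}$. Everything else is bookkeeping with the orthogonal monomial basis of $H^2_d$. (As in the statement, $\mu$ is taken to be a positive finite Borel measure.)
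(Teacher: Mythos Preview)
Your proof is correct and shares the paper's overall strategy: regularize with a parameter $r<1$, identify the regularized quantity with $\iint (1-r^2\langle z,w\rangle)^{-1}\,d\mu\,d\mu$, and pass to the limit using $|1-r^2\zeta|\ge\tfrac12|1-\zeta|$ together with dominated convergence. The implementation differs. The paper treats $(f_\mu)_r$ as the Bochner integral $\int k_{rw}\,d\mu(w)$ in $H^2_d$; basic properties of Bochner integrals then give $\langle f,(f_\mu)_r\rangle_{H^2_d}=\int f_r\,d\mu$ for \emph{all} $f\in H^2_d$ at once, so part~(ii) falls out by letting $r\to 1$ (using the radial identity $\langle f,(f_\mu)_r\rangle=\langle f_r,f_\mu\rangle$). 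You instead expand in the monomial basis, compute $\widehat{f_\mu}(\alpha)$ explicitly, and reassemble the coefficient sum via Fubini. This is more elementary---no vector-valued integration is needed---and even yields the exact identity $\|f_\mu\|_{H^2_d}^2=\iint(1-\langle z,w\rangle)^{-1}\,d\mu\,d\mu$ rather than just the bound, at the small price of the atomlessness observation. On the other hand, part~(ii) becomes a little longer in your version, since you first check the formula on polynomials and then approximate $f$ by its dilates $f_t$.
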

 \begin{proof} (i) For fixed $r\in (0,1)$, the $H_d^2-$ valued function $u_r(z)= k_{rz}$ is continuous on the closed unit ball  $\overline{\mathbb{B}_d}$. For a measure $\mu$ as in the statement, consider the Bochner integral $\int u_rd\mu$. Evaluating at $z\in \mathbb{B}_d$  and using  elementary properties of Bochner integrals yields
 	$$\left(\int u_rd\mu\right)(z)=\int \langle u_r, k_z\rangle_{H_d^2} d\mu= 	f_\mu(rz)= (f_\mu)_r(z).$$
 	But then the same properties of the Bochner integral yield for $f\in H_d^2$
 	\begin{equation}\label{energy_equation}\langle 	f,(f_\mu)_r\rangle_{H_d^2} =\int f_rd\mu.\end{equation}
 	In particular,	for $f=(f_\mu)_r$ we obtain
 	$$\|(f_\mu)_r\|_{H_d^2}^2 =\int\int \frac1{1-r^2\langle z, w\rangle }d\mu(w)d\mu(z)\le \int\int \frac1{|1-r^2\langle z, w\rangle |}d\mu(w)d\mu(z).$$
 	Now let $r\to 1$,  use the inequality  $$|1-r^2\zeta|\ge \frac1{2}|1-\zeta|,$$
 	together with the dominated convergence theorem to conclude that
 	$$\limsup_{r\to 1}\|(f_\mu)_r\|_{H_d^2}^2\le E(\mu).$$
 	Since $(f_\mu)_r(z)\to f_\mu(z), ~z\in \mathbb{B}_d$, it follows that
 	$f_\mu\in H_d^2$ and $\|f_\mu\|_{H_d^2}^2\le E(\mu)$, which proves (i). Part (ii) follows from  (i) and  \eqref{energy_equation}, since
 	$$\int f_rd\mu=\langle 	f,(f_\mu)_r\rangle =\langle 	f_r, f_\mu\rangle.$$
 	If $f\in H_d^2\cap C(\overline{\mathbb{B}_d})$ we obtain the result letting  $r\to 1$.
 \end{proof}
Given a set $S\subseteq\mathbb{C}^d$ and an integer $m\ge 0$, we  say that $S$ contains  an embedded cube of dimension $m$ if  there exists a diffeomorphism $\phi$ from $(-1,1)^m$ into $S$.
 \begin{theorem}\label{thm:Noncyclic} Let $f\in H_d^2\cap C(\overline{\mathbb{B}_d})$ and assume that $Z(f)\cap\partial\mathbb{B}_d$ contains an embedded cube of dimension $m\ge3$. Then $f$ is not cyclic in $H_d^2$.	
 \end{theorem}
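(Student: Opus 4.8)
The plan is to construct, for the given cube in $Z(f)\cap\dB$, a nonzero measure $\mu$ supported on that cube with finite energy $E(\mu)<\infty$ in the sense of \eqref{energy_condition}, such that the bounded linear functional $f\mapsto \int f\,d\mu$ from Lemma~\ref{large-zero-set}(ii) annihilates $[f]$ but not the constant function $1$. Since $1\in[f]$ would follow from cyclicity (polynomials are multipliers dense in $H^2_d$), producing such a $\mu$ forces $f$ to be non-cyclic. The functional annihilates $[f]$ automatically: if $\varphi\in\Mult(H^2_d)$, then $\varphi f\in H^2_d\cap C(\overline{\Bd})$ (as $f$ is continuous and multipliers are bounded, though one should check $\varphi f$ is continuous up to the boundary — this holds since $f$ vanishes on $\partial\Bd$ wherever we need it, or more simply one restricts to $\varphi$ polynomial and uses density), and $\int \varphi f\,d\mu = 0$ because $f\equiv 0$ on $\operatorname{supp}\mu \subseteq Z(f)\cap\dB$. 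On the other hand $\int 1\,d\mu = \mu(\partial\Bd)\ne 0$ if $\mu$ is a positive nonzero measure, giving the required obstruction.

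So the whole theorem reduces to the following geometric-analytic fact: \emph{an embedded $m$-dimensional cube in $\partial\Bd$ with $m\ge 3$ carries a nonzero positive measure $\mu$ with $E(\mu)=\iint |1-\la z,w\ra|^{-1}\,d\mu(z)\,d\mu(w)<\infty$}. First I would pull back the problem via the diffeomorphism $\phi\colon(-1,1)^m\to\dB$ to Lebesgue measure $dt$ on a small closed subcube $Q\subseteq(-1,1)^m$, setting $d\mu = \phi_*(\mathbf{1}_Q\,dt)$. The key estimate is then a lower bound $|1-\la \phi(s),\phi(t)\ra| \gtrsim |s-t|^2$ for $s,t\in Q$: indeed, since $\phi(s),\phi(t)\in\partial\Bd$, we have $|1-\la\phi(s),\phi(t)\ra|\ge \tfrac12|\phi(s)-\phi(t)|^2$ (the same elementary inequality already used in Lemma~\ref{large-zero-set}, applied with unit vectors, via $2\operatorname{Re}(1-\la a,b\ra) = |a-b|^2$ for $|a|=|b|=1$), and $\phi$ being a diffeomorphism on the compact set $\overline Q$ is bi-Lipschitz there, so $|\phi(s)-\phi(t)|\gtrsim|s-t|$. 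Hence
\begin{equation*}
E(\mu) \lesssim \int_Q\int_Q \frac{dt\,ds}{|s-t|^2},
\end{equation*}
and this double integral over an $m$-dimensional cube converges precisely when $m>2$, i.e. $m\ge 3$. That convergence is the standard fact that $|s-t|^{-\beta}$ is locally integrable in $\R^m$ iff $\beta<m$; here $\beta=2<m$.

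The main technical point — and the only real obstacle — is justifying the bi-Lipschitz lower bound and handling the continuity of $\varphi f$ on $\overline{\Bd}$ cleanly. For the former, since $\phi$ is $C^1$ (a diffeomorphism) and $\overline Q$ is compact, the derivative $D\phi$ is continuous and injective on $\overline Q$, so $\|D\phi(t)v\|\ge c\|v\|$ uniformly, and the mean value inequality gives $|\phi(s)-\phi(t)|\ge c'|s-t|$ on the (convex) cube $Q$ after possibly shrinking $Q$; this is routine but should be stated. For the latter, I would simply restrict the annihilation computation to polynomial multipliers $\varphi$: then $\varphi f\in H^2_d\cap C(\overline{\Bd})$ trivially, $\int\varphi f\,d\mu=0$ since $\varphi f$ vanishes on $\operatorname{supp}\mu$, and by continuity of the functional (Lemma~\ref{large-zero-set}(i) gives $\|f_\mu\|_{H^2_d}^2\le E(\mu)<\infty$) this extends to all of $[f]=\clos\{\varphi f:\varphi\in\Mult\}=\clos\{pf: p\text{ polynomial}\}$. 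If $f$ were cyclic, $1\in[f]$, forcing $\mu(\partial\Bd)=\la 1, f_\mu\ra = 0$, a contradiction. This completes the proof.
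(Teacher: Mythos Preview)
Your proof is correct and follows essentially the same route as the paper: push forward Lebesgue measure from a cube via the embedding $\phi$, use the identity $2\operatorname{Re}(1-\la z,w\ra)=|z-w|^2$ on $\partial\Bd$ together with a bi-Lipschitz lower bound for $\phi$ to verify the energy condition \eqref{energy_condition}, and then invoke Lemma~\ref{large-zero-set} to produce a nonzero $f_\mu\in[f]^\perp$. Your treatment is in fact slightly more careful than the paper's in two places: you restrict to a compact convex subcube $Q$ to secure the bi-Lipschitz estimate (the paper simply asserts it on all of $(-1,1)^m$), and you explicitly reduce the annihilation of $[f]$ to polynomial multipliers in order to keep $\varphi f\in C(\overline{\Bd})$, which the paper leaves implicit.
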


 \begin{proof} Let $\phi :(-1,1)^m\to U\subseteq Z(f)\cap\partial\mathbb{B}_d$ be a diffeomorphism. Then $\phi$ satisfies for some $c>0$ that
 	\begin{equation}\label{reverse-Lipschitz}|\phi(t)-\phi(s)|\ge c|t-s|,\quad t,s\in (-1,1)^m.\end{equation}
 	Consider the pushforward measure
 	$$\mu(E)= \lambda_m(\phi^{-1}(E\cap U))$$
 	on $\partial \mathbb{B}_d$, where $\lambda_m$ denotes the $m-$dimensional Lebesgue measure on $(-1,1)^m$.  According to Lemma \ref{large-zero-set} it will be sufficient to show that \eqref{energy_equation} holds. Indeed, in this case part (ii) of the lemma gives that $f_\mu\in [f]^\perp$, and clearly $f_\mu\ne 0$. To demonstrate this, we observe that
 	$$|1-\langle z,w\rangle|\ge\text{Re }(1-\langle z,w\rangle)=\frac{|z-w|^2}{2}, \qquad z ,w \in \partial \mathbb{B}_d,$$
 	to obtain that
 	$$E(\mu)\le \frac2{c}\int_{(-1,1)^m}\int_{(-1,1)^m}
 	\frac1{|t-s|^2} d\lambda_m(s)d\lambda_m(t)<\infty,$$
 	since $m\ge 3$. \end{proof}


\section{Radially weighted Besov spaces} \label{sec:besov}
Throughout this section, $\om$ will denote an admissible radial measure, see Section~\ref{sec:prelim}.
\subsection{Lemmas about ratios of multipliers}
We start with a lemma, which is basically from \cite{AHMR_RadiallyWeightedBesov}, and which says that all radially weighted Besov spaces satisfy the "multiplier inclusion condition" with constant 1.
\begin{lemma} \label{MultIncl} For each $k\in \N$ the space $\mathrm{Mult}(B^{k}_\om)$ is contractively contained in $\mathrm{Mult}(B^{k-1}_\om)$, that is, $$\|\varphi\|_{\mathrm{Mult}(B^{k-1}_\om)}\le \|\varphi\|_{\mathrm{Mult}(B^{k}_\om)} \ \text{  for all }\varphi\in \mathrm{Mult}(B^{k}_\om).$$
\end{lemma}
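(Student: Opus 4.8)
The statement to be proved is Lemma~\ref{MultIncl}: that multiplication by $\varphi$ is contractive from $B^{k-1}_\om$ to itself whenever it is so on $B^k_\om$. The natural strategy is to pass through the identity \eqref{NormNandN-1}, which expresses $\|g\|^2_{B^k_\om}$ in terms of $\|Rg\|^2_{B^{k-1}_\om}$, and to exploit the product rule for the radial derivative, $R(\varphi f) = \varphi Rf + f R\varphi$. The key point is the following dilation/averaging trick: for $0<t<1$, let $\varphi_t(z)=\varphi(tz)$. Then $\varphi_t$ is analytic in a neighbourhood of $\overline{\Bd}$, hence a multiplier of every $B^N_\om$, and one checks that $\|\varphi_t\|_{\Mult(B^k_\om)}\le \|\varphi\|_{\Mult(B^k_\om)}$ because dilation is a contraction on $B^k_\om$ (it is a contraction on each graded piece in \eqref{sumNorm}, as $t^{2n}\le 1$) and dilation commutes with multiplication up to composing the other factor with a dilation. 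So it suffices to prove the contractive inclusion for $\varphi$ with a bit of extra smoothness, and then let $t\to 1$.

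\textbf{The main computation.} Assume then that $\varphi$ extends analytically past $\overline{\Bd}$. Fix $f\in B^{k-1}_\om$ with $\|f\|_{B^{k-1}_\om}\le 1$, and I want $\|\varphi f\|_{B^{k-1}_\om}\le \|\varphi\|_{\Mult(B^k_\om)}$. The idea is to realise $f$ as $Rg$ for a suitable $g\in B^k_\om$: writing $f=\sum_{n\ge 0} f_n$ in homogeneous pieces, set $g = \sum_{n\ge 1}\frac{1}{n} f_n$ (ignoring the constant term, which is handled separately). Then $Rg=f$ minus its constant term, and by \eqref{sumNorm} one has $\|g\|_{B^k_\om}^2 \asymp \sum_{n\ge 1} n^{2k-2}\om_n\|f_n\|^2 \le \|f\|^2_{B^{k-1}_\om}$ up to the constant-term bookkeeping — in fact using \eqref{NormNandN-1} directly, $\|g\|^2_{B^k_\om} = \om(\Bd)|g(0)|^2 + \|Rg\|^2_{B^{k-1}_\om}$, and with $g(0)=0$ this is exactly $\|f - f_0\|^2_{B^{k-1}_\om}\le \|f\|^2_{B^{k-1}_\om}$. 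Now $\varphi g\in B^k_\om$ with $\|\varphi g\|_{B^k_\om}\le \|\varphi\|_{\Mult(B^k_\om)}\|g\|_{B^k_\om}\le \|\varphi\|_{\Mult(B^k_\om)}$, and applying \eqref{NormNandN-1} to $\varphi g$ gives $\|R(\varphi g)\|_{B^{k-1}_\om}\le \|\varphi g\|_{B^k_\om}$. But $R(\varphi g) = \varphi Rg + gR\varphi = \varphi f - \varphi f_0 + gR\varphi$, so I need to show the correction terms $\varphi f_0$ and $gR\varphi$ do not spoil the estimate — and this is where the argument is genuinely delicate, because $\|gR\varphi\|_{B^{k-1}_\om}$ is not obviously controlled.

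\textbf{Where the difficulty lies, and the fix.} The honest route, which is surely what the paper does (and why it attributes the lemma to \cite{AHMR_RadiallyWeightedBesov}), avoids the crude splitting above and instead argues directly with the reproducing kernel or with a more careful telescoping. One clean approach: since $B^k_\om$ has reproducing kernel, and multiplier norm equals the norm of $M_\varphi^*$ on the kernel functions, the inclusion $\|M_\varphi\|_{\Mult(B^{k-1})}\le\|M_\varphi\|_{\Mult(B^k)}$ should follow from a comparison of the two kernels showing that $k_{B^k}(z,w) - \varphi(z)\overline{\varphi(w)}\, c\, k_{B^k}(z,w)\succeq 0$ implies the analogous statement with $k_{B^{k-1}}$, where the two kernels are related by $k_{B^{k-1}} = $ an averaged/integrated version of $k_{B^k}$ against a positive measure in the radial variable (this is precisely the content of passing from weight $\om$ at level $k$ to weight $\om'$ at level $k-1$, cf.\ the relation $D_\alpha = B^N_{\om_{\alpha-2N}}$). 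Positive-definiteness is preserved under such positive averaging of kernels, and the multiplier inequality is a positivity statement about $(1-c|\varphi|^2\text{-type})$ kernels — so the inclusion is inherited. I would therefore organise the proof as: (1) reduce to smooth $\varphi$ by dilation; (2) recall from \eqref{sumNorm} that $B^k_\om$ and $B^{k-1}_{\om'}$ have explicitly comparable graded weights, exhibiting $B^{k-1}_\om$'s kernel as a positive radial average of a kernel of the same $B^k$-type; (3) invoke preservation of the multiplier/Pick positivity under that averaging to transfer the contractive bound. The main obstacle is step (3): making precise the sense in which "lowering $k$ by one" is a positivity-preserving operation on kernels, i.e.\ identifying the right positive measure and checking the Schur-product/averaging argument goes through with constant exactly $1$ rather than merely up to a constant. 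If that positivity route proves awkward, the fallback is the direct derivative computation above, carried out carefully enough to absorb the $gR\varphi$ term — which works because $R\varphi$ and lower-order data of $\varphi$ can be re-expressed via $\varphi$ itself using $\Mult(B^k)\subseteq \Mult(B^{k-1})$ inductively, but this makes the constant-$1$ claim the crux.
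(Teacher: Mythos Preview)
Your second route --- comparing reproducing kernels --- is indeed the one the paper takes, but your proposal leaves the decisive step (3) entirely open, and the specific mechanism you suggest for it is not correct. You propose to exhibit $K^{k-1}$ as a ``positive radial average of a kernel of the same $B^k$-type''; but passing from level $k$ to level $k-1$ multiplies the $n$-th kernel coefficient by $n^2$, and an average $\int K^k_{rw}(rz)\,d\nu(r)$ would instead multiply it by $\int r^{2n}\,d\nu(r)\le \nu([0,1])$. So the ratio $n^2$ cannot arise from any positive radial averaging, and the positivity transfer you have in mind does not apply as stated.

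The paper's execution is short and concrete. From \eqref{sumNorm} one reads off
\[
K^k_w(z) \;=\; \frac{1}{\om(\Bd)} \;+\; \sum_{n\ge 1}\frac{1}{n^{2k}\,\om_n}\,\langle z,w\rangle^n,
\]
so both $K^k$ and $K^{k-1}$ are power series in the single variable $\langle z,w\rangle$ with coefficient sequences $a_n = 1/(n^{2k}\om_n)$ and $b_n = 1/(n^{2(k-1)}\om_n)$ for $n\ge 1$, and $a_0=b_0=1/\om(\Bd)$. The contractive inclusion then follows by feeding these two sequences into Proposition~3.3 of \cite{AHMR_RadiallyWeightedBesov}, which gives a coefficient criterion on such one-variable kernels guaranteeing $\|\varphi\|_{\Mult(\HH_b)}\le\|\varphi\|_{\Mult(\HH_a)}$. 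No dilation reduction is needed, and no derivative manipulation enters.

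Your first approach, via $R(\varphi g)=\varphi Rg + gR\varphi$ and a triangle-inequality split, cannot yield the sharp constant $1$; the cross term $gR\varphi$ is a genuine obstruction, and no induction on $k$ absorbs it without loss. The paper itself uses exactly this splitting in Lemma~\ref{basicMultIncl}, but only to obtain the non-sharp constants $2$ appearing there --- and that lemma already \emph{uses} Lemma~\ref{MultIncl} as input. So the direct-derivative route should be abandoned for the contractive statement.
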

\begin{proof} If the measure $d\om(z) = d\mu(r) d\sigma(w)$ is such that $\mu$ is absolutely continuous, then this follows directly from Theorem 1.2 or Corollary 3.4 of \cite{AHMR_RadiallyWeightedBesov}. But the proof given in \cite{AHMR_RadiallyWeightedBesov} actually applies to the more general situation considered here, where $\mu$ is not assumed to be absolutely continuous. Indeed, it follows from (\ref{sumNorm}) that the reproducing kernel $K^k$ for $B^k_\om$ is given by $$K^k_w(z)= \frac{1}{\om(\Bd)}+ \sum_{n=1}^\infty \frac{1}{n^{2k}\om_n}\la z,w\ra^n.$$ Thus, we can use proposition 3.3 of \cite{AHMR_RadiallyWeightedBesov} with the sequences $\{a_n\}$ and $\{b_n\}$ where $a_0=b_0=\frac{1}{\om(\Bd)}$, $a_n= \frac{1}{n^{2k}\om_n}$ and $b_n=\frac{1}{n^{2(k-1)}\om_n}$ for $n \ge 1$.
\end{proof}

For us, the multiplier inclusion condition is important because the following lemma is now elementary.

\begin{lemma}\label{basicMultIncl}
We have that  $\varphi \in \mathrm{Mult}(B^k_\om)$ if and only if $\varphi \in \mathrm{Mult}(B^{k-1}_\om)$ and $R\varphi \in \mathrm{Mult}(B^k_\om, B^{k-1}_\om)$. Furthermore,
\begin{align} \label{3.1}&\|\varphi\|_{\mathrm{Mult}(B^k_\om)}\le 2(\|\varphi\|_{\mathrm{Mult}(B^{k-1}_\om)} + \|R\varphi\|_{\mathrm{Mult}(B^k_\om,B^{k-1}_\om)} ) \text{ and }\\\label{3.2}  &\|R\varphi\|_{\mathrm{Mult}(B^k_\om,B^{k-1}_\om)} \le 2 \|\varphi\|_{\mathrm{Mult}(B^k_\om)}\end{align}
\end{lemma}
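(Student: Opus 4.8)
The plan is to build the proof from two elementary facts. The first is the Leibniz rule $R(\varphi f) = (R\varphi) f + \varphi\, Rf$, which holds pointwise on $\Bd$ because $R$ is a first order differential operator, hence a derivation on $\Hol(\Bd)$. The second is the norm identity \eqref{NormNandN-1}, $\|h\|_{B^k_\om}^2 = \om(\Bd)|h(0)|^2 + \|Rh\|_{B^{k-1}_\om}^2$, which in particular shows that $Rh \in B^{k-1}_\om$ with $\|Rh\|_{B^{k-1}_\om} \le \|h\|_{B^k_\om}$ whenever $h \in B^k_\om$, and that $\sqrt{\om(\Bd)}\,|h(0)| \le \|h\|_{B^k_\om}$ by \eqref{NormBN}. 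I will also use freely the bound $\|\psi\|_\infty \le \|\psi\|_{\mathrm{Mult}(B^j_\om)}$ recorded in Section~\ref{sec:prelim}, valid for every $j \ge 0$.

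First I would prove sufficiency: if $\varphi \in \mathrm{Mult}(B^{k-1}_\om)$ and $R\varphi \in \mathrm{Mult}(B^k_\om, B^{k-1}_\om)$, then $\varphi \in \mathrm{Mult}(B^k_\om)$ with the bound \eqref{3.1}. Writing $a = \|\varphi\|_{\mathrm{Mult}(B^{k-1}_\om)}$ and $b = \|R\varphi\|_{\mathrm{Mult}(B^k_\om,B^{k-1}_\om)}$, I would take $f \in B^k_\om$, apply \eqref{NormNandN-1} to $h = \varphi f$, and then the Leibniz rule:
\[ \|\varphi f\|_{B^k_\om}^2 = \om(\Bd)|\varphi(0)f(0)|^2 + \|(R\varphi)f + \varphi\, Rf\|_{B^{k-1}_\om}^2 . \]
The point-evaluation term is at most $\|\varphi\|_\infty^2\, \om(\Bd)|f(0)|^2 \le a^2\|f\|_{B^k_\om}^2$; in the remaining term, $\|(R\varphi)f\|_{B^{k-1}_\om} \le b\|f\|_{B^k_\om}$ by definition of the mixed multiplier norm, while $\|\varphi\, Rf\|_{B^{k-1}_\om} \le a\|Rf\|_{B^{k-1}_\om} \le a\|f\|_{B^k_\om}$ since $Rf \in B^{k-1}_\om$. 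Collecting these and using $a^2 + (a+b)^2 \le 2(a+b)^2$ gives $\|\varphi\|_{\mathrm{Mult}(B^k_\om)} \le \sqrt{2}\,(a+b) \le 2(a+b)$, which is \eqref{3.1}.

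For the converse, suppose $\varphi \in \mathrm{Mult}(B^k_\om)$. Then $\varphi \in \mathrm{Mult}(B^{k-1}_\om)$ with $\|\varphi\|_{\mathrm{Mult}(B^{k-1}_\om)} \le \|\varphi\|_{\mathrm{Mult}(B^k_\om)}$ by Lemma~\ref{MultIncl}. To see that $R\varphi \in \mathrm{Mult}(B^k_\om,B^{k-1}_\om)$ and obtain \eqref{3.2}, I would rewrite the Leibniz rule as $(R\varphi)f = R(\varphi f) - \varphi\, Rf$ for $f \in B^k_\om$. The first term lies in $B^{k-1}_\om$ because $\varphi f \in B^k_\om$, and \eqref{NormNandN-1} gives $\|R(\varphi f)\|_{B^{k-1}_\om} \le \|\varphi f\|_{B^k_\om} \le \|\varphi\|_{\mathrm{Mult}(B^k_\om)}\|f\|_{B^k_\om}$; the second term satisfies $\|\varphi\, Rf\|_{B^{k-1}_\om} \le \|\varphi\|_{\mathrm{Mult}(B^{k-1}_\om)}\|Rf\|_{B^{k-1}_\om} \le \|\varphi\|_{\mathrm{Mult}(B^k_\om)}\|f\|_{B^k_\om}$, again using Lemma~\ref{MultIncl} and $\|Rf\|_{B^{k-1}_\om} \le \|f\|_{B^k_\om}$. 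Adding the two estimates yields \eqref{3.2}.

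The argument is essentially bookkeeping and I do not expect a genuine obstacle; the only points requiring care are to carry the $h \mapsto h(0)$ contribution through \eqref{NormNandN-1} correctly, and to invoke Lemma~\ref{MultIncl} — rather than merely $\|\cdot\|_\infty \le \|\cdot\|_{\mathrm{Mult}}$ — so that the constant in \eqref{3.2} is the absolute constant $2$ and not something depending on $\om$. The case $k = 1$ is covered with no change, interpreting $B^0_\om = L^2(\om)$ and $\mathrm{Mult}(B^0_\om) = H^\infty(\Bd)$.
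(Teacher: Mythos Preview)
Your proof is correct and follows essentially the same approach as the paper's: both directions rest on the Leibniz identity $R(\varphi f) = (R\varphi)f + \varphi Rf$ combined with the norm relation \eqref{NormNandN-1}, and the converse invokes Lemma~\ref{MultIncl} in exactly the way you do. The only cosmetic difference is that for \eqref{3.1} the paper bounds $\|(R\varphi)f + \varphi Rf\|^2$ via $(x+y)^2 \le 2(x^2+y^2)$ whereas you use the triangle inequality directly, leading to the same constant $\sqrt{2}\le 2$.
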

\begin{proof} Since $f\in B^k_\om$ if and only if $Rf\in B^{k-1}_\om$, we see that $\varphi\in \mathrm{Mult}(B^k_\om)$, if and only if $(R\varphi)f+\varphi Rf\in B^{k-1}_\om $ for each $f\in B^k_\om.$ Thus, if $\varphi \in \mathrm{Mult}(B^{k-1}_\om)$ and $R\varphi \in \mathrm{Mult}(B^k_\om, B^{k-1}_\om)$, then
$\varphi\in \mathrm{Mult}(B^k_\om)$. Conversely, if $\varphi\in \mathrm{Mult}(B^k_\om)$, then by the multiplier inclusion condition $\varphi\in \mathrm{Mult}(B^{k-1}_\om)$, and hence the identity $(R\varphi)f= R(\varphi f) -\varphi Rf$ implies that $R\varphi \in \mathrm{Mult}(B^k_\om, B^{k-1}_\om)$.
That argument can be used to get the estimates.
Let $f\in B^k_\om$. Then by equation (\ref{NormNandN-1}) we have
\begin{align*} \|\varphi f\|^2_{B^k_\om} &=\om(\Bd)|(\varphi f)(0)|^2+\|(R\varphi)f+\varphi Rf\|^2_{B^{k-1}_\om}\\
&\le \|\varphi\|^2_{\mathrm{Mult}(B^{k-1}_\om)}\om(\Bd)|f(0)|^2 \\ & \ \ \ \  + 2(\|R\varphi\|^2_{\mathrm{Mult}(B^k_\om, B^{k-1}_\om)} \|f\|^2_{B^k_\om} +\|\varphi\|^2_{\mathrm{Mult}(B^{k-1}_\om)}\|Rf\|^2_{B^{k-1}_\om)})\\
&\le 2 (\|R\varphi\|^2_{\mathrm{Mult}(B^k_\om, B^{k-1}_\om)}+\|\varphi\|^2_{\mathrm{Mult}(B^{k-1}_\om)})\|f\|^2_{B^k_\om}
\end{align*}
This proves (\ref{3.1}). Furthermore, since all functions in $(R\varphi)f= R(\varphi f) -\varphi Rf$ are 0 at the origin we have
\begin{align*}\|(R\varphi)f\|_{B^{k-1}_\om}&\le \|R(\varphi f)\|_{B^{k-1}_\om}+ \|\varphi Rf\|_{B^{k-1}_\om}\\
&\le \|\varphi f\|_{B^{k}_\om} + \|\varphi\|_{\mathrm{Mult}(B^{k-1}_\om)} \|Rf\|_{B^{k-1}_\om}\\
&\le ( \|\varphi\|_{\mathrm{Mult}(B^k_\om)}+\|\varphi\|_{\mathrm{Mult}(B^{k-1}_\om)})\|f\|_{B^{k}_\om}\\
&\le 2 \|\varphi\|_{\mathrm{Mult}(B^k_\om)}\|f\|_{B^{k}_\om},\end{align*} where the last inequality followed from Lemma \ref{MultIncl}.
Hence (\ref{3.2}) holds.  \end{proof}

The following lemma is key to proving Theorem~\ref{AbsoluteValue}. Note that it immediately implies Theorem \ref{thm1}.
\begin{lemma}\label{LemRatioEstimate}
If  $M>0$ and $\varphi, \psi \in \mathrm{Mult}(B^N_\om)$ with \begin{enumerate}
\item $\|\varphi\|_{\mathrm{Mult}(B^N_\om)}, \|\psi\|_{\mathrm{Mult}(B^N_\om)} \le M$ and
 \item $\frac{\varphi}{\psi}\in H^\infty(\Bd)$ with $\|\frac{\varphi}{\psi}\|_\infty\le 1$,\end{enumerate} then for all $ s\in \N$ and integers $k$ with $ 0\le  k\le N$ we have $\frac{\varphi^{s+k}}{\psi^s} \in \mathrm{Mult}(B^k_\om)$ with
\begin{equation}\label{RatioMult}\left\|\frac{\varphi^{s+k}}{\psi^s}\right\|_{\mathrm{Mult}(B^k_\om)}\le 8^k (s+k)^kM^k
\end{equation}
Furthermore, if the functions $\varphi, \psi$ are nonzero in $\Bd$, then the conclusion and inequality (\ref{RatioMult}) hold for all real $s >0$.
\end{lemma}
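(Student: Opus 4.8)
The plan is to prove the inclusion and the bound by induction on $k$, showing for each $k\in\{0,1,\dots,N\}$ that the assertion holds simultaneously for all admissible exponents $s$ (all $s\in\N$ in general, and all real $s>0$ once $\varphi,\psi$ are zero-free on $\Bd$). Write $F=\varphi^{s+k}/\psi^{s}=\varphi^{k}(\varphi/\psi)^{s}$; since $\varphi/\psi\in H^\infty(\Bd)$ this is holomorphic on $\Bd$, and in the zero-free case one interprets $(\varphi/\psi)^{s}$ via a holomorphic branch of $\log(\varphi/\psi)$ on the simply connected domain $\Bd$, so that $F$ is again holomorphic and $\|(\varphi/\psi)^{s}\|_\infty\le1$. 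The base case $k=0$ is immediate, since $\mathrm{Mult}(B^0_\om)=H^\infty(\Bd)$ with multiplier norm equal to the supremum norm, whence $\|F\|_{\mathrm{Mult}(B^0_\om)}=\|\varphi/\psi\|_\infty^{\,s}\le1=8^0s^0M^0$.

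For the inductive step, fix $1\le k\le N$ and an admissible $s$, and assume the statement for $k-1$. I would invoke the quadratic estimate that the proof of Lemma~\ref{basicMultIncl} establishes, $\|F\|_{\mathrm{Mult}(B^k_\om)}^{2}\le 2\bigl(\|F\|_{\mathrm{Mult}(B^{k-1}_\om)}^{2}+\|RF\|_{\mathrm{Mult}(B^k_\om,B^{k-1}_\om)}^{2}\bigr)$, which reduces everything to bounding $F$ in $\mathrm{Mult}(B^{k-1}_\om)$ and $RF$ in $\mathrm{Mult}(B^k_\om,B^{k-1}_\om)$. For the first, write $F=\psi\cdot\frac{\varphi^{(s+1)+(k-1)}}{\psi^{s+1}}$: Lemma~\ref{MultIncl} (iterated, as $k-1\le N$) gives $\|\psi\|_{\mathrm{Mult}(B^{k-1}_\om)}\le M$, and the inductive hypothesis bounds the second factor in $\mathrm{Mult}(B^{k-1}_\om)$ by $8^{k-1}(s+k)^{k-1}M^{k-1}$, so $\|F\|_{\mathrm{Mult}(B^{k-1}_\om)}\le 8^{k-1}(s+k)^{k-1}M^{k}$. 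For the second, use that $R$ is a derivation:
\[
RF=(s+k)\,\frac{\varphi^{s+(k-1)}}{\psi^{s}}\,R\varphi-s\,\frac{\varphi^{(s+1)+(k-1)}}{\psi^{s+1}}\,R\psi .
\]
By the inductive hypothesis each ratio factor lies in $\mathrm{Mult}(B^{k-1}_\om)$ with norm $\le 8^{k-1}(s+k)^{k-1}M^{k-1}$ (using $s+k-1\le s+k$ in the first), while $R\varphi,R\psi\in\mathrm{Mult}(B^k_\om,B^{k-1}_\om)$ with norm $\le 2M$ by \eqref{3.2} and Lemma~\ref{MultIncl} (which gives $\|\varphi\|_{\mathrm{Mult}(B^k_\om)},\|\psi\|_{\mathrm{Mult}(B^k_\om)}\le M$). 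Since a product of a $\mathrm{Mult}(B^{k-1}_\om)$-multiplier and a $\mathrm{Mult}(B^k_\om,B^{k-1}_\om)$-multiplier is again such a multiplier with submultiplicative norm, and since $s\le s+k$, this gives $\|RF\|_{\mathrm{Mult}(B^k_\om,B^{k-1}_\om)}\le 4\cdot 8^{k-1}(s+k)^{k}M^{k}$. Plugging both bounds into the quadratic estimate and using $(s+k)^{k-1}\le(s+k)^{k}$ yields $\|F\|_{\mathrm{Mult}(B^k_\om)}^{2}\le 2(1+16)\,8^{2(k-1)}(s+k)^{2k}M^{2k}$, hence $\|F\|_{\mathrm{Mult}(B^k_\om)}\le \sqrt{34}\,8^{k-1}(s+k)^{k}M^{k}\le 8^{k}(s+k)^{k}M^{k}$, which closes the induction. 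The real-$s$, zero-free case runs verbatim, since $\varphi^{s},\psi^{s},(\varphi/\psi)^{s}$ are then well-defined holomorphic functions and the product and chain rules for $R$ hold for real exponents.

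The only genuinely delicate point is the size of the constant, and this is where I expect the main obstacle to lie. If one used the triangle-inequality form \eqref{3.1} of the basic estimate, with its factor $2(\,\cdot+\cdot\,)$, the recursion would only close with $10^{k}$ in place of $8^{k}$; one has to use instead the sharper $\ell^2$-form that the proof of Lemma~\ref{basicMultIncl} actually yields, where the gain $\sqrt{34}<8$ is precisely what makes the induction go through. Equally important is to feed the denominator powers $s$ (in the $R\varphi$ term) and $s+1$ (in the $R\psi$ term and in the bound on $F$) into the inductive hypothesis in exactly the right slots, so that every ratio factor that appears is dominated by the single quantity $8^{k-1}(s+k)^{k-1}M^{k-1}$; getting this bookkeeping right, rather than any conceptual difficulty, is the crux.
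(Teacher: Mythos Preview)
Your argument is correct and follows exactly the paper's inductive scheme: same derivation identity for $RF$, same use of the inductive hypothesis at the shifted exponents $s$ and $s+1$, same appeal to Lemmas~\ref{MultIncl} and~\ref{basicMultIncl}. One small correction to your closing commentary: the paper in fact uses the triangle-inequality form~\eqref{3.1} and still closes with $8^k$, by retaining the sharper coefficient $2(2s+k)$ in the $RF$ bound (rather than your $4(s+k)$) and the factor $(s+k)^{k-1}$ in the $F$ bound, so that $2\bigl(1+2(2s+k)\bigr)(s+k)^{k-1}\le 8(s+k)^k$ once $k\ge 1$; your $\ell^2$ variant is a legitimate alternative, but \eqref{3.1} is not the obstacle you suggest.
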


\begin{proof} Note that  if $s$ is a positive integer or if the functions $\varphi, \psi$ are nonzero in $\Bd$ and $s\in (0,\infty)$, then $\varphi^s/\psi^s \in H^\infty(\Bd)$.  This is the only place that the different hypotheses on $s$ and $\varphi, \psi$ are used, and in the following we will treat these cases simultaneously.

We start by noting that Lemma \ref{MultIncl}  implies that $\|\varphi\|_{\mathrm{Mult}(B^k_\om)} \le M$ and $ \|\psi\|_{\mathrm{Mult}(B^k_\om)} \le M$ for all $k$ with $0\le k\le N$ and hence by Lemma  \ref{basicMultIncl} we have $\|R\varphi\|_{\mathrm{Mult}(B^k_\om, B^{k-1}_\om)} \le 2M$ and $\|R\psi\|_{\mathrm{Mult}(B^k_\om, B^{k-1}_\om)}\le 2M$, if $1\le k\le N$.

We will now establish the lemma by induction on $k$.
Since $\mathrm{Mult}(B^0_\om)=H^\infty(\Bd)$ it is clear that the case $k=0$ holds. Now suppose that $1\le k \le N$ and the inequality (\ref{RatioMult}) holds for $k-1$ for all $s >0$ and for some  $\varphi, \psi$ that satisfy (i) and (ii). Then in particular for a fixed $s$ it also holds for $s+1$, that is,
\begin{align*}\left\|\frac{\varphi^{s+k-1}}{\psi^s}\right\|_{\mathrm{Mult}(B^{k-1}_\om)}&\le 8^{k-1} (s+k-1)^{k-1}M^{k-1}\le 8^{k-1} (s+k)^{k-1}M^{k-1}\\
\left\|\frac{\varphi^{s+k}}{\psi^{s+1}}\right\|_{\mathrm{Mult}(B^{k-1}_\om)}&\le 8^{k-1} (s+k)^{k-1}M^{k-1}.
\end{align*}
We compute
\begin{align*}R \frac{\varphi^{s+k}}{\psi^s}= (s+k)\frac{\varphi^{s+k-1}}{\psi^s}R\varphi - s \frac{\varphi^{s+k}}{\psi^{s+1}} R\psi, \end{align*} and hence

\begin{align*}\left\|R \frac{\varphi^{s+k}}{\psi^s}\right\|_{\mathrm{Mult}(B^k_\om,B^{k-1}_\om)} &\le (s+k)\left\|\frac{\varphi^{s+k-1}}{\psi^s}\right\|_{\mathrm{Mult}(B^{k-1}_\om)} \|R\varphi\|_{\mathrm{Mult}(B^k_\om,B^{k-1}_\om)} \\ & \ \  + s \left\|\frac{\varphi^{s+k}}{\psi^{s+1}}\right\|_{\mathrm{Mult}(B^{k-1}_\om)} \|R\psi\|_{\mathrm{Mult}(B^k_\om,B^{k-1}_\om)} \\
&\le  (2s+k)8^{k-1}(s+k)^{k-1}M^{k-1}(2M)\\
&=2(2s+k)8^{k-1}(s+k)^{k-1}M^k\end{align*}
by the induction hypothesis as stated above. Hence by inequality (\ref{3.1}) and the induction hypothesis we obtain
\begin{align*}\left\|\frac{\varphi^{s+k}}{\psi^s}\right\|_{B^k_\om}&\le 2\left(\left\|\varphi\frac{\varphi^{s+k-1}}{\psi^s}\right\|_{B^{k-1}_\om}+2(2s+k)8^{k-1}(s+k)^{k-1}M^k\right)\\
&\le 2 (M 8^{k-1}(s+k)^{k-1}M^{k-1}+2(2s+k)8^{k-1}(s+k)^{k-1}M^k)\\
&\le 2(1+2(2s+k))8^{k-1}(s+k)^{k-1}M^k\\
&\le 8^{k}(s+k)^{k}M^k.
\end{align*}
\end{proof}

\subsection{The proof of Theorem \ref{PolyCn}}

If $f\in \mathrm{Hol}(\Bd)$ and $0<r<1$, we write $f_r(z)=f(rz)$.
\begin{theorem}\label{f&f_r} Let $N\in \N_0$.
 If $M>0$ and $\varphi\in \mathrm{Mult}(B^N_\om)$ such that $\varphi(z)\ne 0$ for each $z\in \Bd$ and $\|\frac{\varphi}{\varphi_r}\|_\infty\le M$ for all $0<r<1$, then $[\varphi^N]=[\varphi^{N+1}]$, that is, $\varphi\in \HC_N(B^N_\om)$. \end{theorem}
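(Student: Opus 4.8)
The goal is to show $\varphi^N\in[\varphi^{N+1}]$; the reverse inclusion $[\varphi^{N+1}]\subseteq[\varphi^N]$ is immediate because $\varphi^{N+1}=\varphi\cdot\varphi^N$ and $[\varphi^N]$ is a closed multiplier-invariant subspace. The plan is to approximate $\varphi^N$ weakly in $B^N_\om$ by the functions $\varphi^{N+1}/\varphi_r$ as $r\to1^-$. As a first step I would record that, since $\varphi$ is zero-free in $\Bd$, the dilate $\varphi_r$ is analytic and zero-free on the ball $\tfrac1r\Bd\supseteq\overline{\Bd}$; hence $1/\varphi_r$ extends analytically to a neighborhood of $\overline{\Bd}$ and therefore $1/\varphi_r\in\mathrm{Mult}(B^N_\om)$. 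Consequently $\varphi^{N+1}/\varphi_r=(1/\varphi_r)\,\varphi^{N+1}\in[\varphi^{N+1}]$ for every $r\in(0,1)$.

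The crux is a bound on $\|\varphi^{N+1}/\varphi_r\|_{\mathrm{Mult}(B^N_\om)}$ that is uniform in $r$. Here I would apply Lemma~\ref{LemRatioEstimate} with $(\varphi/M,\varphi_r)$ in place of $(\varphi,\psi)$, $s=1\in\N$, and $k=N$ (after replacing $M$ by $\max(M,1)$, which is harmless): hypothesis (ii) holds because $\|(\varphi/M)/\varphi_r\|_\infty=M^{-1}\|\varphi/\varphi_r\|_\infty\le1$, and for hypothesis (i) I need $\sup_{0<r<1}\|\varphi_r\|_{\mathrm{Mult}(B^N_\om)}<\infty$. The latter follows from $\|\varphi_r\|_{\mathrm{Mult}(B^N_\om)}\le\|\varphi\|_{\mathrm{Mult}(B^N_\om)}$, which one gets by writing $\varphi_r$ as a Poisson average of the rotations $z\mapsto\varphi(e^{i\theta}z)$ and using that the norm of $B^N_\om$ is rotation invariant (valid since $\om$ is radial); indeed $\varphi_r g=\int_0^{2\pi}\varphi(e^{i\theta}\,\cdot\,)\,g\,P_r(\theta)\tfrac{d\theta}{2\pi}$ and $\|\varphi(e^{i\theta}\,\cdot\,)\,g\|_{B^N_\om}\le\|\varphi\|_{\mathrm{Mult}(B^N_\om)}\|g\|_{B^N_\om}$ for each $\theta$. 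Lemma~\ref{LemRatioEstimate} then produces a constant $C=C(N,M,\|\varphi\|_{\mathrm{Mult}(B^N_\om)})$ with $\|\varphi^{N+1}/\varphi_r\|_{\mathrm{Mult}(B^N_\om)}\le C$ for all $r$, whence $\|\varphi^{N+1}/\varphi_r\|_{B^N_\om}\le C\|1\|_{B^N_\om}$.

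To finish, I would pick a sequence $r_j\to1^-$ and, using weak sequential compactness of bounded sets in the Hilbert space $B^N_\om$, pass to a subsequence along which $\varphi^{N+1}/\varphi_{r_j}$ converges weakly to some $h\in B^N_\om$. Testing against reproducing kernels, weak convergence forces $h(z)=\lim_j\varphi(z)^{N+1}/\varphi(r_jz)=\varphi(z)^N$ for every $z\in\Bd$ (the zero-freeness of $\varphi$ is used once more), so $h=\varphi^N$. Since each $\varphi^{N+1}/\varphi_{r_j}$ lies in the norm-closed, hence weakly closed, subspace $[\varphi^{N+1}]$, so does the weak limit $\varphi^N$. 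This gives $[\varphi^N]\subseteq[\varphi^{N+1}]$, and together with the trivial reverse inclusion, $[\varphi^N]=[\varphi^{N+1}]$; as $\varphi\ne0$, this is exactly $\varphi\in\HC_N(B^N_\om)$. (For $N=0$ the same argument reduces to showing $\varphi$ is cyclic, with $\mathrm{Mult}(B^0_\om)=H^\infty(\Bd)$.)

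I expect the one point needing genuine care is the uniform multiplier bound in the second paragraph — controlling $\|\varphi^{N+1}/\varphi_r\|_{\mathrm{Mult}(B^N_\om)}$ rather than merely its finiteness as $r\to1$ — which is precisely what Lemma~\ref{LemRatioEstimate} delivers once one knows $\|\varphi_r\|_{\mathrm{Mult}(B^N_\om)}$ stays bounded; the remaining steps are soft.
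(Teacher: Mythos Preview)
Your proposal is correct and follows essentially the same route as the paper: show $\varphi^{N+1}/\varphi_r\in[\varphi^{N+1}]$ via $1/\varphi_r\in\mathrm{Mult}(B^N_\om)$, invoke Lemma~\ref{LemRatioEstimate} with $s=1$, $k=N$ to get a uniform multiplier bound on $\varphi^{N+1}/\varphi_r$, and conclude by weak convergence to $\varphi^N$. The paper compresses this into three sentences; your version simply makes the soft steps explicit (the reverse inclusion, the verification of $\sup_r\|\varphi_r\|_{\mathrm{Mult}(B^N_\om)}<\infty$ via rotation invariance, and the identification of the weak limit through reproducing kernels), and scales $\varphi$ rather than $\psi$ when matching the hypotheses of Lemma~\ref{LemRatioEstimate}, which is an immaterial difference.
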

\begin{proof}
By hypothesis $\frac{1}{\varphi_r}\in \Mult(B^N_\om)$, and thus $\frac{\varphi^{N+1}}{\varphi_r}\in [\varphi^{N+1}]$ for each $r$.
By Lemma \ref{LemRatioEstimate} with $\psi= C\varphi_r$, $k=N$, and $s=1$ there is a constant $K>0$ such that $\|\frac{\varphi^{N+1}}{\varphi_r}\|_{\Mult(B^N_\om)}\le K$ for all $0<r<1$. This implies that $\frac{\varphi^{N+1}}{\varphi_r} \to \varphi^N$ weakly in $B^N_\om$, and thus that $\varphi^N\in [\varphi^{N+1}].$
 \end{proof}
Theorem~\ref{f&f_r} applies to polynomials in $\C_{\stable}[z]$, and thus implies the first part of Theorem \ref{PolyCn}. In order to prove the remaining part of Theorem \ref{PolyCn} we need estimates for $p^n/p_r$ and its derivatives for $n \ge 1$.  We will prove the needed results by looking at slice functions. Thus, we start with a single variable lemma (see Lemma 1 of \cite{KneseKosinskiRansfordSola} for the case $n=1$).
\begin{theorem} Let $n,m\in \N$. There is a constant $c=c(n,m)$ such that whenever  $p$ is a polynomial of degree $\le m$ that has no zeros in $\D$, then for all $0\le r<1$ we have
	\begin{align*}\left| \frac{d^k}{dz^k} \frac{p(z)^n}{p(rz)}\right|\le c &\ |p(0)|^{n-1}  \ \text{ for all } 0\le k<n, z\in \D, \ \text{ and }\\
		\int_{|z|<1} \left| \frac{d^n}{dz^n} \frac{p(z)^n}{p(rz)}\right|^2 &\frac{dA(z)}{\pi} \le c\ |p(0)|^{2n-2} .\end{align*}
\end{theorem}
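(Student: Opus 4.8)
The plan is to reduce everything to a single-variable model computation for the linear factors of $p$ and then reassemble by the Leibniz rule. First I would normalize: since $p$ has no zeros in $\D$ we have $p(0)\ne 0$, and replacing $p$ by $p/p(0)$ extracts a factor $|p(0)|^{n-1}$ from $p(z)^n/p(rz)$ (respectively $|p(0)|^{2n-2}$ after squaring), so it suffices to treat the case $p(0)=1$, with the right-hand sides replaced by constants. Then write $p(z)=\prod_{j=1}^{\ell}(1-z/a_j)$ with $\ell=\deg p\le m$ and $|a_j|\ge 1$; since $|a_j/r|>1$ we have $p(rz)\ne 0$ on $\overline{\D}$, so the quotient is holomorphic there, and
$$\frac{p(z)^n}{p(rz)}=\prod_{j=1}^{\ell}g_j(z),\qquad g_j(z)=\frac{(1-z/a_j)^n}{1-rz/a_j}.$$
A short change of variables $u=z/a_j$, which only contracts because $|1/a_j|\le 1$, reduces all the required estimates for each $g_j$ to the same estimates for the model function $G(u)=\frac{(1-u)^n}{1-ru}$ on $\D$, uniformly in $r\in[0,1)$.

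The analytic core consists of two facts about $G$. The first is the elementary geometric inequality
$$|1-ru|\ \ge\ \tfrac{1}{3}\bigl(|1-u|+(1-r)\bigr),\qquad u\in\overline{\D},\ r\in[0,1),$$
which I would prove by splitting into the case $|1-u|\le 2(1-r)$ (use $|1-ru|\ge 1-r|u|\ge 1-r$) and the case $|1-u|>2(1-r)$ (use $|1-ru|\ge|1-u|-(1-r)|u|\ge\tfrac{1}{2}|1-u|$). The second is the exact identity, obtained by expanding $G^{(n)}$ with the Leibniz rule and collapsing the resulting binomial sum via $(1-ru)-r(1-u)=1-r$:
$$G^{(n)}(u)=\frac{(-1)^n\,n!\,(1-r)^n}{(1-ru)^{n+1}}.$$
For $0\le k<n$ the same Leibniz expansion writes $G^{(k)}$ as a finite sum whose $i$-th term ($0\le i\le k$) is a constant depending only on $n,k,i$ times $r^{k-i}(1-u)^{n-i}(1-ru)^{-(k-i+1)}$; by the geometric inequality $|1-u|^{n-i}|1-ru|^{-(k-i+1)}\le 3^{\,k-i+1}|1-u|^{\,n-k-1}$, and $n-k-1\ge 0$, so each term, hence $G^{(k)}$, is bounded by a constant $c(n)$ on $\D$. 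For the $n$-th derivative, the exact identity gives
$$\int_{\D}\bigl|G^{(n)}(u)\bigr|^2\frac{dA(u)}{\pi}=(n!)^2(1-r)^{2n}\int_{\D}\frac{dA(u)}{\pi\,|1-ru|^{2n+2}},$$
which is $\le c(n)$ uniformly in $r$: one may invoke the standard estimate $\int_{\D}|1-ru|^{-(2n+2)}\,dA\le C(n)(1-r)^{-2n}$, or argue elementarily by combining the geometric inequality with the substitution $v=1-u$ followed by $\rho=(1-r)t$ in polar coordinates, which bounds the integral by a constant times $\int_0^{\infty}t(1+t)^{-(2n+2)}\,dt<\infty$, finite precisely because $n\ge1$.

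It remains to reassemble the product $\prod_{j=1}^{\ell}g_j$. For $k<n$, the general Leibniz rule expresses $\bigl(\prod_j g_j\bigr)^{(k)}$ as a sum of at most $n^{m}$ terms, each a multinomial coefficient (at most $n!$) times a product $\prod_j g_j^{(k_j)}$ with every $k_j\le k<n$, hence of modulus at most $c(n)^{m}$; multiplying back the factor $|p(0)|^{n-1}$ gives the first conclusion. For the $n$-th derivative, in each term of $\bigl(\prod_j g_j\bigr)^{(n)}$ either all $k_j<n$ --- the product is then bounded, hence in $L^2(\D,\tfrac{dA}{\pi})$ since $\D$ carries unit mass --- or exactly one $k_j$ equals $n$ and the others vanish, giving $g_j^{(n)}\prod_{i\ne j}g_i$ with $\|g_j^{(n)}\|_{L^2}\le\sqrt{c(n)}$ and the remaining factors bounded; summing and multiplying by $|p(0)|^{2n-2}$ yields the $L^2$ bound.

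I expect the uniform $L^2$ bound on the $n$-th derivative to be the main obstacle. The crude pointwise bound $|G^{(n)}(u)|\le c(n)\,(|1-u|+1-r)^{-1}$ that follows directly from the geometric inequality is genuinely too weak, since its square fails to be integrable over $\D$ uniformly as $r\to 1$; one really needs the precise numerator $(1-r)^n$ in the exact formula for $G^{(n)}$, which exactly cancels the blow-up of $\int_{\D}|1-ru|^{-(2n+2)}\,dA$ as $r\to1$. Arranging the decomposition of $p(z)^n/p(rz)$ so that this cancellation survives the passage to a product of $\ell\le m$ factors is the delicate point, and it is the reason the argument is organized through the model function $G$ rather than by differentiating $p(z)^n/p(rz)$ directly.
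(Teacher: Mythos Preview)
Your proof is correct and follows essentially the same approach as the paper: normalize to $p(0)=1$, factor into linear pieces, reduce via Leibniz to the model function $G(u)=(1-u)^n/(1-ru)$, and finish with the standard integral estimate $\int_\D|1-ru|^{-(2n+2)}\,dA\lesssim(1-r)^{-2n}$. Your treatment of $G$ is slightly cleaner than the paper's---you obtain the closed form $G^{(n)}(u)=(-1)^n n!(1-r)^n/(1-ru)^{n+1}$ by a direct binomial collapse and use the geometric inequality uniformly in $r$, whereas the paper decomposes $G$ algebraically as a singular term $(r-1)^n/(r^n(1-rz))$ plus a polynomial and splits into cases $r\le 1/2$ and $r>1/2$.
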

\begin{proof} The statement is obviously true for constant polynomials, thus we may assume that the degree of $p$ equals $m\ge 1$. By dividing through by $p(0)$ we may also assume that $p(0)=1$. Then there are $A_1, \dots A_m\in \overline{\D}$ such that $$\frac{p(z)^n}{p(rz)}= \prod_{j=1}^m \frac{(1-A_jz)^n}{(1-A_jrz)} \ \ z\in \D, 0\le r\le 1.$$
	
	Write $g_{A,r}(z)=\frac{(1-Az)^n}{(1-Arz)}$, then by the multi-product Leibniz formula we have for each $0\le k\le n$
	$$\frac{d^k}{dz^k} \frac{p(z)^n}{p(rz)}= \sum_{|\alpha|=k}\frac{|\alpha|!}{\alpha !} \prod_{j=1}^m g_{A_j,r}^{(\alpha_j)}(z).$$

	The Theorem will follow, if we show that there is $c>0$ that is independent of $A\in \overline{\D}$ and  $0 \le r\le 1$ such that  $|g_{A,r}^{(j)}(z)|\le c$, whenever $0\le j<n$  and $z\in \D$, and $\int_{|z|<1} |g_{A,r}^{(n)}(z)|^2 \frac{dA(z)}{\pi} \le c$.
	
	\
	
	Note that  $|g^{(j)}_{A,r}(z)|=|g^{(j)}_{1,r}(Az)A^j|\le |g^{(j)}_{1,r}(Az)|$. Hence it suffices to prove the statement for $A=1$. If $0\le r\le 1/2$, then the function $\frac{(1-z)^n}{1-rz}$ and all of its derivatives are rational functions with poles in $\{|z|\ge 2\}$ that are continuous as  functions of the parameter $r$, hence by compactness there is $c>0$ such that $|g^{(j)}_{1,r}(z)|\le c$ for all $0\le j\le n$, $ |z|\le 1$ and all $ 0\le r\le 1/2.$

	Now let $ 1/2\le r < 1$. For $z\in \D$ set $w=1-rz$. Then \begin{align*}g_{1,r}(z)&=\frac{((r-1)+w)^n}{r^nw}\\&=\frac{1}{r^n} \sum_{k=0}^n \binoN(r-1)^{n-k}w^{k-1}\\&=\frac{1}{r^n} \left(\frac{(r-1)^n}{1-rz}+ q(r,z)\right)\end{align*}
	where $q(r,z)$ is a polynomial expression in the variables $r$ and $z$.

	Let $0\le j\le n$ and take the $j$-th derivative
	
	\begin{align*} |g^{(j)}_{1,r}(z)|&=r^{-n}\left|\frac{j!r^j(r-1)^n}{(1-rz)^{j+1}})+\frac{\partial^j}{\partial z^j}q(r,z)\right|\\
		&\le 2^{-n} \left( \frac{j!(1-r)^n}{|1-rz|^{j+1}}+|\frac{\partial^j}{\partial z^j}q(r,z)|\right)
	\end{align*}
	By compactness the expressions $|\frac{\partial^j}{\partial z^j}q(r,z)|\le C$, where $C$ is independent of $0\le j\le n$, $|z|\le 1$ and $1/2\le r\le 1$.
	Furthermore, if $j<n$, then $ \frac{j!(1-r)^n}{|1-rz|^{j+1}}\le (n-1)!$, hence the boundedness statement follows in those cases. Finally, we have that
	$$\int_\D |g^{(n)}_{1,r}(z)|^2 \frac{dA(z)}{\pi}\le 2 \cdot 2^{-2n}\left(\int_{\D}\frac{(n!)^2(1-r)^{2n}}{|1-rz|^{2n+2}}\frac{dA(z)}{\pi}+ C^2\right)\le C(n)$$
	see for example \cite[Theorem~1.7]{HedKorZhu}.
\end{proof}
We can now prove the second part of Theorem \ref{PolyCn}. For convenience we restate it here.
\begin{theorem} Let $N\in \N$ and let $\om$ be an admissible radial measure of the type $d\om(w)=u(r)2rdr d\sigma(z)$, $u\in L^\infty(0,1)$. Then every polynomial $p$ without zeros in $\Bd$ satisfies $[p^{N-1}]=[p^N]$ in $B^N_\om$, that is, $\C_{\stable}[z]\subseteq \HC_{N-1}(B^N_\om)$.\end{theorem}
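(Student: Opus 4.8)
The plan is to reduce the $d$-variable statement to a one-variable integral estimate via slice functions, using the last single-variable theorem as the main technical input. Fix a polynomial $p$ with no zeros in $\Bd$ and normalize $p(0)=1$. For $0<r<1$ the function $1/p_r$ extends analytically to a neighborhood of $\overline{\Bd}$, so $1/p_r\in \Mult(B^N_\om)$ and hence $p^N/p_r\in [p^N]$. As in the proof of Theorem~\ref{f\&f_r}, it will suffice to show that $\{p^N/p_r : 0<r<1\}$ is norm-bounded in $B^N_\om$: weak compactness then gives a subsequence converging weakly to $p^N/p_1 = p^{N-1}$, which therefore lies in $[p^N]$, while the reverse inclusion $[p^N]\subseteq[p^{N-1}]$ is trivial.

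To bound $\|p^N/p_r\|_{B^N_\om}$ I would work directly from the homogeneous-expansion formula \eqref{sumNorm}, which for the measure $d\om(w)=u(r)2r\,dr\,d\sigma(z)$ with $u\in L^\infty$ reduces the $B^N_\om$ norm to a weighted sum $\sum_n n^{2N}\om_n\|h_n\|^2_{H^2_d}$ with $\om_n\lesssim \frac1n\int_0^1 t^{2n}\,dt \approx n^{-2}$, i.e. essentially the $D_{2N-2}(\Bd)$ norm — equivalently, $h\in B^N_\om$ iff $R^{N}h$ is controlled in a Bergman-type space, and one ultimately wants an $L^2(\text{weighted area})$ bound on $R^N(p^N/p_r)$. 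The key move is to pass to slices: writing $q_z(\lambda)=p(\lambda z)$ for $z\in\dB$, $q_z$ is a single-variable polynomial of degree $\le \deg p$ with no zeros in $\D$ and $q_z(0)=1$, and $R^k(p^N/p_r)$ restricted to the slice through $z$ becomes $\lambda^k\frac{d^k}{d\lambda^k}\big(q_z(\lambda)^N/q_z(r\lambda)\big)$ up to lower-order terms. The previous single-variable theorem (applied with $n=N$, $m=\deg p$) gives, uniformly in $z\in\dB$ and $0\le r<1$, that $\big|\frac{d^k}{d\lambda^k}(q_z^N/q_z(r\,\cdot))\big|\le c$ for $k<N$ and $\int_\D \big|\frac{d^N}{d\lambda^N}(q_z^N/q_z(r\,\cdot))\big|^2\,dA/\pi \le c$, since $|q_z(0)|=1$. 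Integrating these slice estimates over $z\in\dB$ against $\sigma$ and reassembling via the integral representation of the $B^N_\om$ norm yields a bound on $\|p^N/p_r\|_{B^N_\om}$ independent of $r$.

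The main obstacle I anticipate is bookkeeping rather than a genuine difficulty: one must carefully express $R^N(p^N/p_r)$ in terms of the slice derivatives (the radial derivative $R$ acts as $\lambda\,d/d\lambda$ along slices, so $R^N$ produces a triangular combination of $\lambda^j (d/d\lambda)^j$, $1\le j\le N$, with polynomial coefficients), and then verify that the polar integration formula for $\|\cdot\|_{B^N_\om}$ — using $d\om(w)=u(r)2r\,dr\,d\sigma(z)$ and the comparison \eqref{sumNorm} between the $H^2(\dB)$ and $H^2_d$ norms of homogeneous pieces — exactly matches the slice-wise $L^2(\D, dA)$ quantities controlled by the single-variable theorem. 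The pointwise bounds for the lower-order derivatives $k<N$ handle the terms where not all derivatives fall on the "top" factor and also control the $|f(0)|$ term, while the single $L^2$-estimate handles the top-order term; the hypothesis $u\in L^\infty$ is what makes the radial integral $\int_0^1(\cdot)u(r)2r\,dr$ dominated by a plain area integral, so no extra weight degeneracy intervenes. Once these identifications are in place the uniform bound follows, and with it $p^{N-1}\in[p^N]$, i.e. $p\in \HC_{N-1}(B^N_\om)$.
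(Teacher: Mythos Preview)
Your proposal is correct and follows essentially the same route as the paper: slice functions, the identity $R^N f(\lambda z)=\sum_{k=1}^N a_k\lambda^k\frac{d^k}{d\lambda^k}f_z(\lambda)$, the single-variable lemma applied to $q_z(\lambda)^N/q_z(r\lambda)$, integration over $z\in\dB$, and the weak-limit argument. The only unnecessary detour is the discussion of $\om_n\lesssim n^{-2}$ via \eqref{sumNorm}; the paper bypasses this by working directly with the integral definition $\|R^Nf\|_{L^2(\om)}^2=\int_{\dB}\int_0^1\int_0^{2\pi}|R^Nf(re^{it}z)|^2\,\frac{dt}{2\pi}\,u(r)2r\,dr\,d\sigma(z)$ and bounding $u$ by $\|u\|_\infty$ to land immediately on the slice-wise area integrals.
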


\begin{proof} Let $f\in \Hol(\Bd)$, and let $f=\sum_{n=0}^\infty f_n$ be the representation as sum of homogeneous polynomials of degree $n$.  For $\lambda\in \C$ let $f_z(\lambda)=f(\lambda z)$ be a slice function at $z\in \dB$.
Write $D_\lambda =\lambda \frac{\partial}{\partial \lambda}$, then for $f=\sum_{n=0}^\infty f_n$ we have $Rf(\lambda z)=\sum_{n=1}^\infty n f_n(z) \lambda^n= D_\lambda f_z(\lambda)$. Then $$R^Nf(\lambda z)= \sum_{k=1}^N a_k \lambda^k \frac{\partial^k}{\partial \lambda^k}f_z(\lambda)$$ for some coefficients $a_1, \dots, a_N$, and
\begin{align*} \int_{\Bd}|R^N f|^2 d\om&=
 \int_{\dB}\int_{[0,1]}\int_0^{2\pi}|R^N f(re^{it}z)|^2 \frac{dt}{2\pi} w(r)2rdrdtd\sigma(z)\\
&\le \|w\|_\infty \int_{\dB}\int_\D\left |\sum_{k=1}^N a_k \lambda^k \frac{\partial^k}{\partial \lambda^k}f_z(\lambda)\right|^2 \frac{dA(\lambda)}{\pi} d\sigma(z)\\
&\le N\|w\|_\infty\sum_{k=1}^N |a_k| \int_{\dB}\int_\D \left| \frac{\partial^k}{\partial \lambda^k}f_z(\lambda)\right|^2 \frac{dA(\lambda)}{\pi} d\sigma(z).\end{align*}
Hence, by the lemma,
\begin{align*}\int_{\Bd} \left|R^N \frac{p^N}{p_r}\right|^2 d\om&\le N \|w\|_\infty\sum_{k=1}^N |a_k|\ c |p(0)|^{2N-2}
\end{align*}
This implies that $\frac{p^N}{p_r}\to p^{N-1}$ weakly in $B^N_\om$ as $r\to 1$, and therefore that $p^{N-1}\in [p^N]$.
\end{proof}

\subsection{The proof of Theorem \ref{AbsoluteValue}}

In Lemma \ref{LemRatioEstimate} we proved that for $\varphi, \psi\in \Mult(B^N_\om)$ with $\varphi/\psi\in H^\infty$ we have $\varphi^{N+k}/\psi^k \in \Mult(B^N_\om)$. That immediately implies that $\varphi^{N+k}\in [\psi^k]$. We will now take advantage of the fact that $\om$ is a radial measure to show that actually $\varphi^{N+k-1}\in [\psi^k]$. That is, we will prove Theorem \ref{AbsoluteValue}. We start with some preliminaries.

\begin{lemma} Let $N\ge 1$.For any $f\in B^N_\om$ we have
$$\|f-f_r\|_{B^{N-1}_\om} \le (1-r) \|f\|_{B^{N}_\om} , 0<r<1.$$
\end{lemma}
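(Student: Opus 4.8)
The plan is to reduce the inequality to a purely coefficient-wise estimate via the series formula \eqref{sumNorm} for the $B^M_\om$-norms. First I would write $f=\sum_{n=0}^\infty f_n$ as a sum of homogeneous polynomials, so that $f_r=\sum_{n=0}^\infty r^n f_n$ and hence
\[
f-f_r=\sum_{n=1}^\infty (1-r^n)\,f_n;
\]
note that the $n=0$ term disappears, so $(f-f_r)(0)=0$. Since $n^{2(N-1)}\le n^{2N}$ for $n\ge 1$, the space $B^N_\om$ embeds contractively into $B^{N-1}_\om$, so both $f$ and $f_r$, and thus $f-f_r$, lie in $B^{N-1}_\om$ and \eqref{sumNorm} applies to $f-f_r$. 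Because its constant term vanishes,
\[
\|f-f_r\|_{B^{N-1}_\om}^2=\sum_{n=1}^\infty n^{2(N-1)}(1-r^n)^2\,\om_n\,\|f_n\|_{H^2_d}^2,
\]
while \eqref{sumNorm} for $f$ reads $\|f\|_{B^N_\om}^2=\om(\Bd)|f(0)|^2+\sum_{n=1}^\infty n^{2N}\om_n\|f_n\|_{H^2_d}^2$.

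Next I would compare the two sums term by term. It suffices to prove, for every integer $n\ge 1$ and $0\le r<1$, the elementary inequality
\[
n^{2(N-1)}(1-r^n)^2\le (1-r)^2\,n^{2N},\qquad\text{equivalently}\qquad 1-r^n\le n(1-r),
\]
which follows at once from the factorisation $1-r^n=(1-r)\sum_{k=0}^{n-1}r^k$ together with $0\le r^k\le 1$. Summing the term-by-term bound and discarding the nonnegative quantity $\om(\Bd)|f(0)|^2$ on the right gives $\|f-f_r\|_{B^{N-1}_\om}^2\le (1-r)^2\|f\|_{B^N_\om}^2$, and taking square roots yields the claim.

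I do not expect a real obstacle here: the argument is essentially a Hardy-space-type Ces\`aro estimate transported to $B^N_\om$ through the orthogonal homogeneous decomposition. The only small points needing care are bookkeeping ones --- checking that $f-f_r$ genuinely sits in $B^{N-1}_\om$ so that \eqref{sumNorm} is legitimate, and keeping track of the constant term, which is zero on the left-hand side (because $f_r(0)=f(0)$) and merely adds a nonnegative contribution on the right-hand side.
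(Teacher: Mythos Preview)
Your proof is correct and is essentially the same as the paper's: both expand $f$ into homogeneous components, use the orthogonality encoded in \eqref{sumNorm}, and reduce to the elementary inequality $1-r^n\le n(1-r)$. The only cosmetic difference is that the paper writes the intermediate sums in terms of $\|f_n\|_{B^{N-1}_\om}$ rather than unpacking to $\om_n\|f_n\|_{H^2_d}^2$ as you do.
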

\begin{proof} Let $f= \sum_{n=0}^\infty f_n$ be the expansion of $f$ into a sum of homogeneous polynomials of degree $n$. Then for $0<r<1$ we have
\begin{align*}\|f-f_r\|^2_{B^{N-1}_\om}&= \sum_{n=0}^\infty \|(1-r^n)f_n\|^2_{B^{N-1}_\om}\\  &\le (1-r)^2 \sum_{n=0}^\infty n^2 \|f_n\|^2_{B^{N-1}_\om}\\ &\le(1-r)^2\|f\|^2_{B^{N}_\om}.\end{align*} \end{proof}
\begin{lemma} Let $k\ge 0$ and $\varphi \in \mathrm{Mult}(B^k_\om)$. Then
$$\|R\varphi_r\|_{\mathrm{Mult}(B^k_\om)} \le \frac{\|\varphi\|_{\mathrm{Mult}(B^k_\om)}}{1-r^2}.$$
\end{lemma}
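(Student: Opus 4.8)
The plan is to realize $R\varphi_r$ as a rotational average of $\varphi$ and then estimate a scalar integral. Writing $\varphi=\sum_{n\ge 0}\varphi_n$ for the expansion into homogeneous polynomials, we have $\varphi_r=\sum_{n\ge 0}r^n\varphi_n$, so by Euler's relation $R\varphi_n=n\varphi_n$ we get $R\varphi_r=\sum_{n\ge 1}nr^n\varphi_n$. The key observation is that the generating function $g(\zeta)=\frac{r\zeta}{(1-r\zeta)^2}=\sum_{n\ge 1}nr^n\zeta^n$ is analytic on $\{|\zeta|<1/r\}$, a disc containing $\overline{\D}$. For fixed $z\in\Bd$ the map $\zeta\mapsto\varphi(\zeta z)$ is analytic on $\{|\zeta|<1/|z|\}\supseteq\overline{\D}$ with Taylor expansion $\sum_n\varphi_n(z)\zeta^n$, which therefore converges absolutely on $|\zeta|\le1$. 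Multiplying the two series and integrating term by term over the circle $|\zeta|=1$ thus yields the pointwise identity
$$R\varphi_r(z)=\frac1{2\pi}\int_0^{2\pi}g(e^{i\theta})\,\varphi(e^{-i\theta}z)\,d\theta,\qquad z\in\Bd.$$

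Next I would upgrade this to a statement about multiplication operators. Since $B^k_\om$ is a radially weighted Besov space, it follows from \eqref{sumNorm} that the scalar rotations $U_\theta f(z)=f(e^{-i\theta}z)$ form a strongly continuous one-parameter group of unitaries on $B^k_\om$, and a direct computation gives $M_{\varphi^{(\theta)}}=U_\theta M_\varphi U_\theta^{*}$, where $\varphi^{(\theta)}(z):=\varphi(e^{-i\theta}z)$. In particular $\varphi^{(\theta)}\in\Mult(B^k_\om)$ with $\|\varphi^{(\theta)}\|_{\Mult(B^k_\om)}=\|\varphi\|_{\Mult(B^k_\om)}$ for every $\theta$. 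Fix a polynomial $f$; then $\theta\mapsto\varphi^{(\theta)}f=U_\theta M_\varphi U_\theta^{*}f$ is norm-continuous into $B^k_\om$ by strong continuity of $U_\theta$, so the displayed identity, multiplied by $f$, holds as a Bochner integral $(R\varphi_r)f=\frac1{2\pi}\int_0^{2\pi}g(e^{i\theta})\,(\varphi^{(\theta)}f)\,d\theta$ in $B^k_\om$. Taking norms,
$$\|(R\varphi_r)f\|_{B^k_\om}\le\Big(\frac1{2\pi}\int_0^{2\pi}|g(e^{i\theta})|\,d\theta\Big)\,\|\varphi\|_{\Mult(B^k_\om)}\,\|f\|_{B^k_\om},$$
and since polynomials are dense in $B^k_\om$ this shows $R\varphi_r\in\Mult(B^k_\om)$ with $\|R\varphi_r\|_{\Mult(B^k_\om)}\le\big(\frac1{2\pi}\int_0^{2\pi}|g(e^{i\theta})|\,d\theta\big)\|\varphi\|_{\Mult(B^k_\om)}$.

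Finally I would compute the scalar integral: on $|\zeta|=1$ we have $|g(e^{i\theta})|=\frac{r}{|1-re^{i\theta}|^2}=\frac{r}{1-r^2}\,P_r(\theta)$, where $P_r$ is the Poisson kernel with $\frac1{2\pi}\int_0^{2\pi}P_r(\theta)\,d\theta=1$, so $\frac1{2\pi}\int_0^{2\pi}|g(e^{i\theta})|\,d\theta=\frac{r}{1-r^2}\le\frac1{1-r^2}$, which is the asserted bound.

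I do not expect a serious obstacle; the whole content is the identification of $g$ and the rotational-average representation, after which everything is routine. The only points needing a little care are the term-by-term integration in the pointwise identity (covered by absolute convergence of $\sum_n\varphi_n(z)\zeta^n$ on $|\zeta|\le1$ for each fixed $z\in\Bd$) and the Bochner integrability of $\theta\mapsto\varphi^{(\theta)}f$ (covered by strong continuity of the rotation group); notably no complete Pick or reproducing-kernel positivity argument enters. If one wishes to sidestep vector-valued integration altogether, the same bound follows by defining the candidate operator through the bounded sesquilinear form $(f,h)\mapsto\frac1{2\pi}\int_0^{2\pi}g(e^{i\theta})\langle\varphi^{(\theta)}f,h\rangle\,d\theta$ and checking that it agrees with $M_{R\varphi_r}$ on monomials.
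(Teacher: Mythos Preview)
Your proof is correct and is essentially the same computation as the paper's, just presented differently. The paper views $F(\lambda)(z)=\varphi(\lambda z)$ as a $\Mult(B^k_\om)$-valued bounded analytic function on $\D$ and applies the Cauchy estimate $\|F'(r)\|\le \|\varphi\|_{\Mult}/(1-r^2)$, noting $R\varphi_r=rF'(r)$; unpacking that Cauchy integral on the circle $|\zeta|=1$ gives precisely your rotational-average identity with kernel $g(e^{i\theta})=re^{i\theta}/(1-re^{i\theta})^2$, and the Poisson-kernel evaluation $\frac{1}{2\pi}\int|g|=r/(1-r^2)$ is exactly the Cauchy estimate. The only difference is packaging: the paper works directly in the Banach algebra $\Mult(B^k_\om)$, while you descend to the action on $B^k_\om$ via the unitary rotation group and Bochner integration, which makes explicit what the paper's one-line appeal to the Cauchy formula takes for granted (namely that $\lambda\mapsto M_{\varphi_\lambda}$ is analytic and bounded by $\|\varphi\|_{\Mult}$).
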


\begin{proof} For $\lambda\in \D, z\in \Bd$ define $F(\lambda)(z)=\varphi(\lambda z)$. Then $F:\D \to \Mult(B^k_\om)$ is an analytic function with $\|F(\lambda)\|_{\Mult(B^k_\om)}\le \|\varphi\|_{\Mult(B^k_\om)}$ for all $\lambda \in \D$. Then by the Cauchy formula we obtain $\|F'(\lambda)\|_{\Mult(B^k_\om)}\le \frac{\|\varphi\|}{1-|\lambda|^2}$. The Lemma follows, because $(R\varphi_r)(z)= rF'(r)(z)$.
\end{proof}

\begin{lemma}\label{LemRadialEstimate} If $N\ge 1$ and if $f,g \in B^N_\om$ such that $\varphi=f/g \in \mathrm{Mult}(B^{N-1}_\om)$, then for all $0\le r<1$ we have
$$\|\varphi_r g\|_{B^N_\om}\le 3\|\varphi\|_{\mathrm{Mult}(B^{N-1}_\om)} \|g\|_{B^N_\om}+\|f\|_{B^N_\om}$$ and hence
$f\in [g]$, since  $\varphi_rg\to f$ weakly in $B^N_\om$.
\end{lemma}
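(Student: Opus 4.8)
The plan is to prove the norm inequality and then read off the weak convergence. The single real obstacle is that the natural bound for $R\varphi_r$, namely $\|R\varphi_r\|_{\mathrm{Mult}(B^{N-1}_\om)}\le \|\varphi\|_{\mathrm{Mult}(B^{N-1}_\om)}/(1-r^2)$ furnished by the preceding lemma, degenerates as $r\to 1$; the remedy is to peel off the harmless summand $f_r$ from $\varphi_r g$ so that the remaining term carries a compensating factor $1-r$. I would first record three routine facts: (a) dilation $h\mapsto h_r$ is a contraction on every $B^k_\om$, which is immediate from \eqref{sumNorm} because dilation merely inserts the factors $r^{2n}\le 1$; (b) $\|\varphi_r\|_{\mathrm{Mult}(B^{N-1}_\om)}\le \|\varphi\|_{\mathrm{Mult}(B^{N-1}_\om)}$, which is established at the start of the proof of the preceding lemma; and (c) $\varphi_r g_r=f_r$ on $\Bd$, since $\varphi g=f$ there.

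The identity driving the proof is $\varphi_r g=f_r+\varphi_r(g-g_r)$. By (c) and (a), $\|f_r\|_{B^N_\om}\le\|f\|_{B^N_\om}$, so it remains to show $\|\varphi_r(g-g_r)\|_{B^N_\om}\le 3\|\varphi\|_{\mathrm{Mult}(B^{N-1}_\om)}\|g\|_{B^N_\om}$. Since $g(0)=g_r(0)$, the function $\varphi_r(g-g_r)$ vanishes at the origin, so by \eqref{NormNandN-1} this reduces to bounding $\|R(\varphi_r(g-g_r))\|_{B^{N-1}_\om}$. As $R$ is a derivation commuting with dilation, $R(\varphi_r(g-g_r))=(R\varphi_r)(g-g_r)+\varphi_r\bigl(Rg-(Rg)_r\bigr)$. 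Using the two preceding lemmas, the first term is at most $\|R\varphi_r\|_{\mathrm{Mult}(B^{N-1}_\om)}\|g-g_r\|_{B^{N-1}_\om}\le \frac{\|\varphi\|_{\mathrm{Mult}(B^{N-1}_\om)}}{1-r^2}\,(1-r)\|g\|_{B^N_\om}\le\|\varphi\|_{\mathrm{Mult}(B^{N-1}_\om)}\|g\|_{B^N_\om}$, which is exactly where the $1-r$ gain cancels the dangerous $1/(1-r^2)$. The second term is at most $\|\varphi\|_{\mathrm{Mult}(B^{N-1}_\om)}\bigl(\|Rg\|_{B^{N-1}_\om}+\|(Rg)_r\|_{B^{N-1}_\om}\bigr)\le 2\|\varphi\|_{\mathrm{Mult}(B^{N-1}_\om)}\|g\|_{B^N_\om}$ by (b), (a), and $\|Rg\|_{B^{N-1}_\om}\le\|g\|_{B^N_\om}$ (again \eqref{NormNandN-1}). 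Summing the two estimates gives the factor $3$, and hence the claimed inequality.

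For the final assertion: the inequality just proved shows $\{\varphi_r g:0\le r<1\}$ is bounded in $B^N_\om$, while for each fixed $z\in\Bd$ one has $\varphi_r(z)g(z)\to\varphi(z)g(z)=f(z)$ by continuity of $\varphi$. A standard argument in Hilbert function spaces — any weak cluster point of a bounded net must agree with the pointwise limit on all point evaluations, hence equal $f$, forcing the whole net to converge weakly — gives $\varphi_r g\to f$ weakly in $B^N_\om$ as $r\to 1$. Finally, $\varphi$ is holomorphic on $\Bd$, so $\varphi_r$ is holomorphic on $\{|z|<1/r\}\supseteq\overline{\Bd}$ and therefore lies in $\mathrm{Mult}(B^N_\om)$; thus $\varphi_r g\in[g]$ for every $r$, and since $[g]$ is a norm-closed (hence weakly closed) subspace, $f\in[g]$.
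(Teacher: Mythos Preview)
Your proof is correct and follows essentially the same route as the paper: the identical decomposition $\varphi_r g=f_r+\varphi_r(g-g_r)$, the reduction via \eqref{NormNandN-1} to $\|R(\varphi_r(g-g_r))\|_{B^{N-1}_\om}$, and the same use of the two preceding lemmas to cancel the $1/(1-r^2)$ against the $(1-r)$. Your write-up is slightly more explicit about the contractivity of dilation and the weak-convergence step, but the argument is the paper's argument.
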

\begin{proof} We have $$\|\varphi_r g\|_{B^N_\om}\le\|\varphi_r (g-g_r)\|_{B^N_\om}+\|\varphi_r g_r\|_{B^N_\om} \le\|\varphi_r (g-g_r)\|_{B^N_\om}+\|f\|_{B^N_\om}.$$ Since $g-g_r$ vanishes at the origin we have $\|\varphi_r (g-g_r)\|_{B^N_\om}= \|R(\varphi_r (g-g_r))\|_{B^{N-1}_\om}$. Using the previous two lemmas, we have
\begin{multline*} \|R(\varphi_r (g-g_r)) \|_{B^{N-1}_\om} \le \|(g-g_r)R\varphi_r\|_{B^{N-1}_\om}+\|\varphi_r R(g-g_r)\|_{B^{N-1}_\om}\\
\le \frac{\|\varphi\|_{\mathrm{Mult}(B^{N-1}_\om)}}{1-r^2} (1-r) \|g\|_{B^{N}_\om} + \|\varphi\|_{\mathrm{Mult}(B^{N-1}_\om)} \|R(g-g_r)\|_{B^{N-1}_\om}\\
\le 3 \|\varphi\|_{\mathrm{Mult}(B^{N-1}_\om)} \|g\|_{B^{N}_\om},
\end{multline*}
concluding the proof.
\end{proof}
\begin{proof}[Proof of Theorem \ref{AbsoluteValue}] Let $N\in \N$ and $\varphi, \psi \in \Mult(B^N_\om)$ with $\varphi/\psi \in H^\infty$. Let $k\in \N$, and set $f= \varphi^{N+k-1}$ and $g=\psi^k$. Then by Lemma \ref{LemRatioEstimate} we have $\frac{f}{g}=\frac{\varphi^{N+k-1}}{\psi^k}\in \Mult(B^{N-1}_\om)$. Hence Lemma \ref{LemRadialEstimate} implies $\varphi^{N+k-1}\in [\psi^k]$.
\end{proof}


\subsection{Cyclicity and zero sets}
If $f:U\to \R$ is a function, then let  $Z(f)=\{x\in U:f(x)=0\}$ be the zero locus of $f$. As in \cite{KneseKosinskiRansfordSola} we will use the {\L}ojasiewicz inequality from real algebraic geometry, see \cite{Loja}.
\begin{lemma} Let $U \subseteq \mathbb{R}^{2d}$ be open, and let $f \colon U \to \mathbb{R}$ be a real analytic function such that $Z(f)\ne \emptyset$. Then for every compact set $K \subseteq U$ there are positive constants $p$ and $C$ such that
$$\dist (x, Z(f))^p \leq C|f(x)| \ \text{ for all }x\in K.$$
\end{lemma}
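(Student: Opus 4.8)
This is the classical {\L}ojasiewicz inequality (see \cite{Loja}), and since in this paper it is only used as an imported tool, a reference suffices; but here is how I would organize a proof. The plan is to reduce it to the local version at points of $Z(f)$ and then to a standard fact about subanalytic functions.

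First I would dispose of the region away from the zero set. Fix a compact neighborhood $\overline{U'}$ of $K$ with $\overline{U'}\subseteq U$, and pick a point $z_0\in Z(f)$, so that $\dist(x,Z(f))\le|x-z_0|$ is bounded on $K$. For any open $V\supseteq Z(f)\cap K$ the continuous function $|f|$ has a positive minimum on the compact set $K\setminus V$, so there the inequality holds with $p$ arbitrary once $C$ is large. Thus it suffices to prove a local estimate: for each $a\in Z(f)\cap K$ there are a neighborhood $V_a$ and constants $C_a,p_a>0$ with $\dist(x,Z(f))^{p_a}\le C_a|f(x)|$ on $V_a$. Covering the compact set $Z(f)\cap K$ by finitely many such $V_a$, putting $p=\max_a p_a$ (harmless, since raising the exponent on the bounded quantity $\dist(\cdot,Z(f))$ is absorbed into $C$) and enlarging $C$ completes the patching.

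For the local estimate the clean route is to invoke the {\L}ojasiewicz inequality for subanalytic functions: if $g,h\colon\overline{U'}\to\R$ are continuous subanalytic functions with $\{h=0\}\subseteq\{g=0\}$, then $|h|^{N}\le C|g|$ on $\overline{U'}$ for some $C,N>0$. I would apply this with $g=f|_{\overline{U'}}$, which is real analytic and hence subanalytic, and $h(x)=\min(\dist(x,Z(f)\cap\overline{U'}),\delta)$ for a small $\delta>0$; this $h$ is continuous and subanalytic, and $\{h=0\}=Z(f)\cap\overline{U'}=\{g=0\}$. Taking $\delta<\dist(K,\partial U')$ forces $h(x)=\dist(x,Z(f))$ for every $x\in K$ with $\dist(x,Z(f))<\delta$, so the bound $|h(x)|^{N}\le C|f(x)|$ is exactly the claimed inequality on that part of $K$, the rest of $K$ being covered by the elementary argument above.

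The hard part is of course the subanalytic (equivalently, the local real-analytic) {\L}ojasiewicz inequality itself; that is the only nontrivial input. Its proof goes either through Hironaka resolution of singularities---reducing $f$ locally to a monomial times a non-vanishing factor on a normal-crossings divisor, where the bound is an explicit computation one then pushes down along the proper resolution map---or through the curve selection lemma, reducing to Puiseux expansions of $f\circ\gamma$ and of $\dist(\gamma(t),Z(f))$ along real-analytic arcs $\gamma$ through $a$, both of which vanish to finite rational order. Everything else above is routine bookkeeping.
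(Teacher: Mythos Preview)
Your proposal is correct and matches the paper's treatment: the paper states this lemma as the classical {\L}ojasiewicz inequality and simply cites \cite{Loja} (following \cite{KneseKosinskiRansfordSola}) without giving any proof. Your additional sketch of how one would organize an argument---reduction to a local statement via compactness, then invoking the subanalytic {\L}ojasiewicz inequality or, at bottom, resolution of singularities or curve selection---is accurate and goes beyond what the paper provides, but since the result is used purely as an imported tool here, the citation alone suffices.
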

We obtain the following.
\begin{lemma} \label{LojaCoro}
Suppose that $f, g\in \Hol(\Bd)$ such that
\begin{enumerate}
\item $f(z)\ne 0, g(z)\ne 0$ for all $z\in \Bd$,
\item $f$ extends to be analytic in a neighborhood of $\overline{\Bd}$,
\item $g$ satisfies a Lipschitz condition of order $\alpha >0$.
\end{enumerate}
Let $Z(g) \subseteq \partial \Bd$ denote the zero set of the Lipschitz extension of $g$.

If $Z(f) \cap \partial \Bd \subseteq Z(g)\cap \dB$, then there is a constant $C > 0$ and an integer $j > 0$ such that
$${|g(z)|^j} \leq C{|f(z)|}$$ for all $ z \in \Bd.$
\end{lemma}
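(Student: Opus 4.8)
The plan is to reduce the pointwise bound to one \L{}ojasiewicz-type inequality for $f$ on $\overline{\Bd}$, and then to feed it into the Lipschitz hypothesis on $g$. Write $Z_f=Z(f)\cap\dB$; since $f$ is zero-free in $\Bd$ by hypothesis (1), $Z_f$ is in fact the whole zero set of $f$ in $\overline{\Bd}$, and likewise $Z(g)\subseteq\dB$. If $Z_f=\emptyset$ then $f$ is zero-free on the compact set $\overline{\Bd}$, so $|f|\ge c>0$ there; as $g$ extends continuously to $\overline{\Bd}$ it is bounded, and $|g(z)|\le\|g\|_\infty\le(\|g\|_\infty/c)|f(z)|$, so $j=1$ works. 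Assume henceforth $Z_f\ne\emptyset$; note also $f\not\equiv0$. Everything then reduces to the claim that there are an integer $q>0$ and $C_1>0$ with
$$\dist(z,Z_f)^q\le C_1|f(z)|\qquad\text{for all }z\in\overline{\Bd}.$$

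Granting this, the rest is quick. Since $Z_f\subseteq Z(g)$, the function $g$ vanishes on $Z_f$; hence, for $z\in\overline{\Bd}$, choosing $w\in Z_f$ with $|z-w|=\dist(z,Z_f)$, hypothesis (3) gives $|g(z)|=|g(z)-g(w)|\le C|z-w|^\alpha=C\dist(z,Z_f)^\alpha$. Choosing $j\in\N$ with $\alpha j\ge q$ and using $\dist(\cdot,Z_f)\le2$ on $\overline{\Bd}$,
$$|g(z)|^j\le C^j\dist(z,Z_f)^{\alpha j}\le C^j2^{\alpha j-q}\dist(z,Z_f)^q\le C^j2^{\alpha j-q}C_1|f(z)|,$$
which is the assertion of the lemma.

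It remains to prove the displayed inequality, and here I would use the \L{}ojasiewicz inequality. The function $|f|^2=f\bar f$ is real analytic on a neighborhood of $\overline{\Bd}$; the ball $\overline{\Bd}=\{|z|^2\le1\}$ is compact and semialgebraic; and $Z_f=\{z\in\overline{\Bd}:|z|^2=1,\ f(z)=0\}$ is a compact subanalytic set, so $z\mapsto\dist(z,Z_f)$ is continuous and subanalytic on $\overline{\Bd}$. Because $f$ is zero-free in $\Bd$, the zero set of $|f|^2$ on $\overline{\Bd}$ is exactly $Z_f$, i.e. the two functions $|f|^2$ and $\dist(\cdot,Z_f)$ have the same zero set on $\overline{\Bd}$. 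The subanalytic form of the \L{}ojasiewicz inequality (which generalizes the distance version stated above) then produces an exponent $q>0$ and a constant $C_0>0$ with $\dist(z,Z_f)^q\le C_0|f(z)|^2$ on $\overline{\Bd}$; as $|f|^2\le\|f\|_\infty|f|$ there, we obtain the desired estimate with $C_1=C_0\|f\|_\infty$ (and we may take $q$ integral by enlarging it). If one wishes to use only the version of the \L{}ojasiewicz inequality recorded above, apply it to the real-analytic function $|f|^2$ on a neighborhood $U\supseteq\overline{\Bd}$ with $K=\overline{\Bd}$ to bound a power of $\dist(z,Z_U)$ by $|f(z)|^2$, where $Z_U$ is the zero set of the extension in $U$; then compare $\dist(\cdot,Z_f)$ with $\dist(\cdot,Z_U)$ on $\overline{\Bd}$ — these are continuous subanalytic functions with the same zero set $Z_f$ on $\overline{\Bd}$ — to get $\dist(z,Z_f)^s\le C'\dist(z,Z_U)$, and compose the two inequalities.

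The one delicate point is this last comparison: $Z_f$ may be a proper subset of the zero set $Z_U$ of the analytic extension, and $Z_U$ can approach $\overline{\Bd}$ tangentially from outside at points of $Z_f$, so the bare distance form of \L{}ojasiewicz for the extension does not by itself control $\dist(z,Z_f)$. What makes the comparison work is precisely that $\overline{\Bd}$ has real-analytic boundary, so that $\overline{\Bd}$ and $Z_f$ are subanalytic and the subanalytic \L{}ojasiewicz inequality applies directly. The remaining ingredients — the reduction above, the Lipschitz estimate, and the degenerate case — are routine.
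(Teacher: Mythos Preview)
Your argument is correct, but it takes a genuinely different route from the paper at the key step. You reduce (just as the paper does) to proving $\dist(z,Z_f)^q\le C_1|f(z)|$ on $\overline{\Bd}$, and you obtain this by invoking the \emph{subanalytic} form of the \L{}ojasiewicz inequality, using that $|f|^2$ and $\dist(\cdot,Z_f)$ are continuous subanalytic functions on $\overline{\Bd}$ with the same zero set. That is legitimate, and you are candid that the bare real-analytic version recorded in the paper does not suffice because the ambient zero set $Z_U$ of the extension may be strictly larger than $Z_f$.

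The paper sidesteps the subanalytic machinery with a neat trick that lets it use only the elementary real-analytic \L{}ojasiewicz inequality. First it applies \L{}ojasiewicz to $|f|^2$ to get $(1-|z|^2)^n\le C_1|f(z)|^2$ on $\overline{\Bd}$ (using that all zeros of the extension lie outside $\Bd$). It then applies \L{}ojasiewicz a second time to the auxiliary real-analytic function
\[
r(z)=|f(z)|^2+\bigl(1-|z|^2\bigr)^n,
\]
whose zero set in the ambient neighborhood is \emph{exactly} $Z_f=Z(f)\cap\partial\Bd$, since both summands are nonnegative. This yields $\dist(z,Z_f)^m\le C_2\,r(z)\le C_2(1+C_1)|f(z)|^2$ directly, with no subanalytic comparison needed. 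So your approach buys directness at the price of heavier tools, while the paper's approach stays self-contained by manufacturing a real-analytic function with the right zero set.
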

\begin{proof} If $Z(f) \cap \partial \Bd=\emptyset$, then $|f|$ is bounded below on $\overline{\Bd}$ and the conclusion of the lemma follows with $j=1$. Thus,  we assume $Z(f) \cap \partial \Bd\ne\emptyset$.
Since  $f$ has no zeroes in $\Bd$ we have
$$1-\sum_{i=1}^d |z_i|^2\le 2 \dist(z, Z(f)\cap \partial \Bd) \text{ for all } z\in \Bd.$$

Then by the {\L}ojasiewicz inequality applied with $K=\overline{\Bd}$ there is an even integer $n$ and a $C_1>0$ such that
$$\left(1- \sum_{i=1}^d |z_i|^2 \right)^{n} \le 2^n \dist(z, Z(f))^{n} \le C_1 |f(z)|^2, \quad z \in \Bd.$$
Next we apply the {\L}ojasiewicz inequality to the function
$$r(z) = |f(z)|^2 + \left(1- \sum_{i=1}^d |z_i|^2  \right)^{n},$$ and we find that there is an integer $m$ and $C_2>0$ such that for all $z\in \Bd$
$$\dist(z, Z(f) \cap \partial \Bd)^m \le C_2 r(z) \le C_2(1+C_1)|f(z)|^2 .$$
On the other hand, by the Lipschitz property of $g$, we have that there is $C_3>0$ such that
$$|g(z)| \le C_3 \dist(z, Z(g))^\alpha \leq C_3\dist(z, Z(f) \cap \partial \Bd)^\alpha \text{ for all }z\in \Bd.$$
This proves the lemma with $j \ge  \frac{m}{2\alpha}$.
\end{proof}
\begin{proof}[Proof of Theorem \ref{ZeroSetThm}] Let $f,g \in \Mult(B^N_\om)$ as in the hypothesis of the theorem. Then by Lemma \ref{LojaCoro} there is $j\in \N$ such that $\frac{g^j}{f}\in H^\infty$. Then Theorem \ref{AbsoluteValue} implies that for each $k\in \N$ we have $g^{j(N+k-1)}\in [f^k]\subseteq [f]$. This proves the first part of the theorem with $n=jN$. If $g$ is a polynomial, then  $[g^N]=[g^m]$ for all $m\ge N$. Hence taking $k=N$ we obtain $g^N\in [g^N]=[g^{j(2N-1)}]\subseteq [f^N]$.
\end{proof}

\section{Complete Pick spaces}\label{SecPickSpaces}

\subsection{Background}
Recall that each Hilbert function space $\HH$ has a reproducing kernel $k: X \times X\to \C$. Writing $k_w(z)=k(z,w)$ it  satisfies $f(w)=\la f, k_w\ra$ for all $f\in \HH$, $w\in X$.  A  reproducing kernel $k$ on $X$ is called a normalized complete Pick kernel, if there is $w_0\in X$ and a function $b$ from $X$ into some auxiliary Hilbert space $\HK$ such that $b(w_0)=0$ and
$$k_w(z)= \frac{1}{1-\la b(z),b(w)\ra_\HK}.$$ In the interesting case where $\HH$ is a Hilbert space of analytic functions one easily shows that $\HH$ is separable, and then one may assume that $\HK$ is separable also.

 We say that a Hilbert function space $\HH$ on $X$ is a complete Pick space, if there is a Hilbert space norm on $\HH$ that is equivalent to the original norm, and such that the reproducing kernel that $\HH$ has with respect to the new norm is a normalized complete Pick kernel. The Hardy space $H^2$ of the unit disc is the easiest example of a complete Pick space. The Drury-Arveson space $H^2_d$ is an example of a complete Pick space on $\Bd$, with reproducing kernel $k_w(z)=\frac{1}{1-\la z,w\ra}$.
Superharmonically weighted Dirichlet spaces on the unit disc are further examples of complete Pick spaces \cite{ShimorinCNP}.  The results of \cite{AHMR_RadiallyWeightedBesov} yield further large classes of radially weighted Besov spaces that are also complete Pick spaces.

We now recall some important basic properties of complete Pick spaces.
\begin{lemma} \label{kernelMult} If $k$ is a normalized complete Pick kernel and if $w\in X$, then $k_w\in \rm{Mult}(\HH)$ with $\|k_w\|_{\rm{Mult}(\HH)} \le 2 \|k_w\|^2_{\HH}.$
\end{lemma}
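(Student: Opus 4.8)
The plan is to realize $k_w$ as a norm-convergent Neumann series of multipliers. Since $k$ is a normalized complete Pick kernel, fix $w_0\in X$ and $b\colon X\to\HK$ with $b(w_0)=0$ and $k_w(z)=(1-\la b(z),b(w)\ra_\HK)^{-1}$, and set $\psi_w(z)=\la b(z),b(w)\ra_\HK$, so that $1/k_w=1-\psi_w$. Because $k_z(z)=\|k_z\|_\HH^2$ is a finite nonnegative real number, the identity $k_z(z)=(1-\|b(z)\|_\HK^2)^{-1}$ forces $\|b(z)\|_\HK<1$ for every $z\in X$; in particular $t:=\|b(w)\|_\HK<1$, and by Cauchy--Schwarz $\|\psi_w\|_\infty\le t<1$.

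The key step is to show that $\psi_w\in\MH$ with $\|\psi_w\|_{\MH}\le t$, and this is the only non-routine point in the argument. I would verify that $(1-t^{-2}\psi_w(z)\overline{\psi_w(z')})\,k(z,z')$ is a positive semidefinite kernel on $X$ and then invoke the elementary, universal direction of the Pick property: if $\varphi$ satisfies $(1-\varphi(z)\overline{\varphi(z')})\,k(z,z')\succeq 0$, then $M_\varphi$ is a contraction on $\HH$ (checked directly on finite linear combinations of kernel functions). For the positivity, let $P$ be the orthogonal projection of $\HK$ onto $\C\,b(w)$ and write
$$1-t^{-2}\psi_w(z)\overline{\psi_w(z')}=1-\la Pb(z),b(z')\ra_\HK=\bigl(1-\la b(z),b(z')\ra_\HK\bigr)+\la (I-P)b(z),b(z')\ra_\HK.$$
Multiplying by $k(z,z')$: the first summand contributes $(1-\la b(z),b(z')\ra_\HK)\,k(z,z')\equiv 1$, a positive kernel, and the second contributes the Schur product of $k$ with the Gram kernel $\la (I-P)b(z),(I-P)b(z')\ra_\HK$, again positive; the sum is therefore positive. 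Hence $t^{-1}\psi_w$, and so $\psi_w$, is a multiplier of $\HH$ with $\|\psi_w\|_{\MH}\le t$. (This is just the statement that $b$ is a contractive multiplier from $\HH$ into the vector-valued space $\HH\otimes\HK$.)

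Finally, since $\|\psi_w^{\,n}\|_{\MH}\le t^n$ with $t<1$, the series $\sum_{n\ge 0}\psi_w^{\,n}$ converges in $\MH$; and since $\|\psi_w\|_\infty<1$, its sum is the pointwise function $(1-\psi_w)^{-1}=k_w$. Therefore $k_w\in\MH$ and
$$\|k_w\|_{\MH}\le\sum_{n=0}^\infty t^n=\frac{1}{1-t}=\frac{1+t}{1-t^2}\le\frac{2}{1-t^2}=2\,k_w(w)=2\|k_w\|_\HH^2,$$
which is the claimed bound. The main obstacle is the positivity verification in the middle paragraph; the rest is the geometric-series estimate together with the identities $k_w(w)=\|k_w\|_\HH^2=(1-t^2)^{-1}$.
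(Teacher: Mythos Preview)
Your argument is correct. The paper does not supply its own proof of this lemma; it merely cites \cite{GrRiSu}, \cite{Serra}, and \cite{AlHaMcRiFreeOuter}. Your approach---showing that $\psi_w(z)=\la b(z),b(w)\ra_\HK$ is a multiplier of norm at most $t=\|b(w)\|_\HK$ via the positivity of $(1-t^{-2}\psi_w(z)\overline{\psi_w(z')})\,k(z,z')$, and then summing the Neumann series $\sum_n\psi_w^{\,n}$ in $\MH$---is essentially the standard argument one finds in those references. One small remark: the case $w=w_0$, where $t=0$ and the division by $t$ is not legitimate, should be disposed of separately at the outset; but there $k_{w_0}\equiv 1$ and the estimate $\|k_{w_0}\|_{\MH}=1\le 2=2\|k_{w_0}\|_\HH^2$ is immediate.
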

For spaces of analytic functions this was proved in \cite{GrRiSu}. For the general version see \cite[Proposition 4.4]{Serra} or \cite[Lemma~7.2]{AlHaMcRiFreeOuter}.
\begin{corollary}If $\HH$ is a complete Pick space, then $\rm{Mult}(\HH)$ is dense in $\HH$. In particular, $f\in \HH$ is cyclic if and only if $1\in [f]$.\end{corollary}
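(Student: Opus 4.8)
The plan is to obtain both assertions directly from Lemma~\ref{kernelMult}. First I would reduce to the case where the reproducing kernel $k$ of $\HH$ is \emph{itself} a normalized complete Pick kernel: by definition of a complete Pick space there is a Hilbert space norm on $\HH$, equivalent to the original one, for which this holds, and passing to an equivalent norm changes neither the set $\MH$, nor the family of closed subspaces, nor the invariant subspace $[f]$ generated by a given $f$. So I may assume from the outset that $k_w(z) = (1-\la b(z),b(w)\ra_\HK)^{-1}$ with $b(w_0)=0$ for some $w_0\in X$. I would then recall the standard reproducing kernel fact that the linear span of $\{k_w : w\in X\}$ is dense in $\HH$: if $f\in\HH$ is orthogonal to every $k_w$, then $f(w)=\la f,k_w\ra=0$ for all $w\in X$, so $f\equiv 0$.

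Next I would invoke Lemma~\ref{kernelMult}, which gives $k_w\in\MH$ for every $w\in X$. Since $\MH$ is a linear subspace of functions on $X$ contained in $\HH$ (indeed $\mathbf 1\in\HH$, as recorded below, and any $\varphi\in\MH$ equals $\varphi\cdot\mathbf 1\in\HH$), every finite linear combination $\sum_i c_i k_{w_i}$ again lies in $\MH$. Hence $\MH$ contains a dense subset of $\HH$, so $\MH$ is dense in $\HH$, which is the first assertion.

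For the ``in particular'' statement I would first record that $\mathbf 1\in\HH\cap\MH$: the constant function $1$ satisfies $M_{\mathbf 1}=\mathrm{id}_\HH$, so $\mathbf 1\in\MH$, and $b(w_0)=0$ forces $k_{w_0}\equiv 1$, so $\mathbf 1=k_{w_0}\in\HH$. Now suppose $\mathbf 1\in[f]$. Since $[f]$ is multiplier invariant, $\varphi=\varphi\cdot\mathbf 1\in[f]$ for every $\varphi\in\MH$, whence $\MH\subseteq[f]$; taking closures and using the density just proved yields $\HH=\clos\MH\subseteq[f]\subseteq\HH$, i.e. $[f]=\HH$ and $f$ is cyclic. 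The converse is immediate, since $\mathbf 1\in\HH=[f]$ whenever $f$ is cyclic.

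There is no genuine obstacle here: the whole substance sits in Lemma~\ref{kernelMult}, and the remaining argument is bookkeeping. The only points needing a moment's care are the reduction to an honestly normalized complete Pick kernel, so that Lemma~\ref{kernelMult} applies verbatim, and the remark that $\mathbf 1=k_{w_0}$ lies in $\HH$ (and in $\MH$), without which the equivalence ``$f$ cyclic $\iff 1\in[f]$'' would not even be well posed.
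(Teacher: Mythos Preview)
Your argument is correct and is precisely the intended one: the paper states the corollary immediately after Lemma~\ref{kernelMult} without proof, and the implicit reasoning is exactly what you spell out---the kernel functions $k_w$ lie in $\MH$ by the lemma and span a dense subspace of $\HH$, whence $\MH$ is dense, and the cyclicity characterization then follows since $\mathbf 1=k_{w_0}\in\HH$.
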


We shall need another useful estimate in spaces with  a normalized complete Pick kernel. For spaces of analytic functions this was also proved in \cite{GrRiSu}, while the general version given here follows from Corollary 3.3 in \cite{AHMRFactor}.

\begin{lemma} \label{pointwiseestimate} If $k$ is a normalized complete Pick kernel, $f\in \HH$, and $w\in X$, then $$|f(w)|^2\le 2\text{\rm Re} \langle f, k_w f\rangle_\HH -\|f\|_{\HH}^2.$$
\end{lemma}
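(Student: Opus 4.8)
The plan is to prove the equivalent inequality
\[
\|f\|_\HH^2+|f(w)|^2\le 2\operatorname{Re}\langle f,k_wf\rangle_\HH
\]
by testing it on the dense subspace spanned by the reproducing kernels and reducing to the positive semidefiniteness of a finite matrix built from the complete Pick structure. By Lemma~\ref{kernelMult} we have $k_w\in\Mult(\HH)$, so $g\mapsto 2\operatorname{Re}\langle g,k_wg\rangle_\HH-\|g\|_\HH^2-|g(w)|^2$ is a continuous functional on $\HH$; hence it is enough to prove the inequality for $f=\sum_{i=1}^n c_ik_{z_i}$ with $z_1,\dots,z_n\in X$. Using the identity $M_{k_w}^*k_z=\overline{k_w(z)}\,k_z$ together with the reproducing property, one computes
\[
2\operatorname{Re}\langle f,k_wf\rangle_\HH-\|f\|_\HH^2-|f(w)|^2=\sum_{i,j=1}^n c_i\overline{c_j}\,A_{ij},\qquad A_{ij}=k(z_j,z_i)\bigl(k_w(z_j)+\overline{k_w(z_i)}-1\bigr)-\overline{k_w(z_i)}\,k_w(z_j),
\]
so the claim reduces to showing that the Hermitian matrix $(A_{ij})$ is positive semidefinite.

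For this I would substitute the normalized complete Pick representation $k_w(z)=\bigl(1-\langle b(z),b(w)\rangle_\HK\bigr)^{-1}$. Writing $a_i=\langle b(z_i),b(w)\rangle$ and $G_{ij}=\langle b(z_i),b(z_j)\rangle$, clearing denominators (the single-variable cancellation $\tfrac{1}{1-a_j}+\tfrac{1}{1-\overline{a_i}}-1=\tfrac{1-a_j\overline{a_i}}{(1-a_j)(1-\overline{a_i})}$ does the work) exhibits $A$ as a Hadamard (Schur) product of three matrices,
\[
A_{ij}=\bigl(\overline{k_w(z_i)}\,k_w(z_j)\bigr)\cdot\bigl(G_{ji}-a_j\overline{a_i}\bigr)\cdot k(z_j,z_i).
\]
The first factor is rank-one positive semidefinite; the third is the Gram matrix of $k_{z_1},\dots,k_{z_n}$ and hence positive semidefinite. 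For the middle factor, let $P$ be the orthogonal projection of $\HK$ onto $\C\,b(w)$ and set $C=(I-P)+\sqrt{1-\|b(w)\|^2}\,P$, which makes sense because $\|b(w)\|^2=1-1/k_w(w)<1$; a short computation then gives $G_{ji}-a_j\overline{a_i}=\langle Cb(z_j),Cb(z_i)\rangle$, so this factor is the Gram matrix of $Cb(z_1),\dots,Cb(z_n)$, again positive semidefinite. The Schur product theorem yields $(A_{ij})\ge 0$, and the continuity remark completes the proof.

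The step requiring the most attention is the positivity of the middle factor $\bigl(G_{ji}-a_j\overline{a_i}\bigr)_{ij}$: it is the only one of the three that is not purely formal, and it is precisely the point where the finiteness of $k_w(w)$ — equivalently $\|b(w)\|<1$ — enters, through the contraction $C$. The remaining ingredients, namely the reduction to kernel functions, the algebraic identity for $A_{ij}$, and the Schur product theorem, are routine.
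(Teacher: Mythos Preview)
Your argument is correct. Note, however, that the paper does not supply its own proof of this lemma: it merely cites external references (\cite{GrRiSu} for spaces of analytic functions, and \cite[Corollary~3.3]{AHMRFactor} for the general case). So there is no in-paper proof to compare against.

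Your direct approach---testing on finite linear combinations of kernel functions, expressing the resulting Hermitian form as a Schur product of three positive semidefinite matrices, and invoking the Schur product theorem---is a clean, self-contained argument that uses only the explicit form $k_w(z)=(1-\langle b(z),b(w)\rangle)^{-1}$ of the normalized complete Pick kernel. The algebraic identity for $A_{ij}$ checks out, and the factorization of the middle matrix $(G_{ji}-a_j\overline{a_i})_{ij}$ as the Gram matrix of the vectors $Cb(z_j)$ is nicely done; this is indeed the substantive step, and your observation that $\|b(w)\|^2=1-1/k_w(w)<1$ is exactly what is needed for $C$ to be well defined. One very minor quibble: the map $g\mapsto 2\operatorname{Re}\langle g,k_wg\rangle-\|g\|^2-|g(w)|^2$ is a continuous \emph{quadratic form}, not a ``continuous functional''; the density argument is of course unaffected.
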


Finally we recall a special case of \cite[Theorem 1.1 (i)]{AHMRFactor}.
 \begin{theorem} \label{SarasonAnalog} Let $\HH$ be a complete Pick space with $k_{w_0}=1$. For  $f : X\to \C,$  the following are equivalent:
  \begin{enumerate}
  \item $f\in \HH$  and $ \|f\|\le 1 $;
\item there are multipliers $\varphi, \psi\in \text{\rm Mult}(\HH)$ such that \begin{enumerate}
\item $f= \frac{\varphi}{1-\psi}$
\item $\psi(w_0)=0$, and
\item $\|\psi h\|^2+\|\varphi h\|^2 \le \|h\|^2 $ for every $h\in \HH$.
\end{enumerate}
\end{enumerate}
\end{theorem}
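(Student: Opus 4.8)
Since we have arranged $k_{w_0}=\mathbf 1$, the statement is a reformulation of Theorem~1.1(i) of \cite{AHMRFactor}, and my plan is to deduce it from there; for orientation let me describe the two implications. Throughout write $\mathbf 1$ for the constant function $1=k_{w_0}$, so $\|\mathbf 1\|_\HH=1$ and $\langle h,\mathbf 1\rangle_\HH=h(w_0)$ for $h\in\HH$. Note also that every $\varphi\in\Mult(\HH)$ lies in $\HH$, since $\varphi=\varphi\mathbf 1$, so in particular all the products of multipliers appearing below are genuine elements of $\HH$.

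For the implication (ii)$\Rightarrow$(i) I would give a direct argument. Condition (c) says exactly that the column multiplier $h\mapsto(\psi h,\varphi h)$ from $\HH$ into $\HH\oplus\HH$ is a contraction, i.e. $M_\psi^*M_\psi+M_\varphi^*M_\varphi\le I$; in particular $\psi$ is a contractive multiplier, and a two-point Pick argument (using $\psi(w_0)=0$, $k_{w_0}=\mathbf 1$, and $k_z(z)=\|k_z\|^2\ge 1$) gives $|\psi(z)|^2\le 1-1/k_z(z)<1$ for every $z\in X$. Hence $1-\psi$ is zero-free and $f=\sum_{k\ge 0}\psi^k\varphi$ converges pointwise. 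From $M_\varphi^*M_\varphi\le I-M_\psi^*M_\psi$ one gets the telescoping estimate $\|\psi^k\varphi\|^2=\langle M_\varphi^*M_\varphi\psi^k,\psi^k\rangle\le\|\psi^k\|^2-\|\psi^{k+1}\|^2$, so $\sum_{k\ge 0}\|\psi^k\varphi\|^2\le\|\mathbf 1\|^2=1$; combined with $\psi(w_0)=0$ (which forces $\langle\psi u,\mathbf 1\rangle=(\psi u)(w_0)=0$ for all $u\in\HH$, killing cross terms evaluated at $w_0$) this bounds the partial sums $f_N=\sum_{k=0}^N\psi^k\varphi$ uniformly. Since $f_N\to f$ pointwise, weak-$*$ compactness of the closed unit ball of the reproducing kernel Hilbert space $\HH$ then gives $f\in\HH$ with $\|f\|\le 1$.

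The substantive direction is (i)$\Rightarrow$(ii): from $f\in\HH$ with $\|f\|\le 1$ one must manufacture the multipliers $\varphi,\psi$, and it is here that the complete Pick hypothesis is essential. I would use the complete Pick property in the form $1-1/k_w(z)=\langle b(z),b(w)\rangle_\HK$ with $b(w_0)=0$, translate the positivity encoded by $\|f\|\le 1$ (namely positive semidefiniteness of the kernel $k_w(z)-f(z)\overline{f(w)}$) into a Leech-type factorization problem over $\Mult(\HH)$, and invoke the realization (commutant lifting) theorem available for complete Pick kernels to produce a contractive colligation whose transfer function supplies $\varphi$ and $\psi$; the normalization $b(w_0)=0$ together with $k_{w_0}=\mathbf 1$ forces $\psi(w_0)=0$, and condition (c) is exactly contractivity of the colligation. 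This is carried out in \cite{AHMRFactor} (cf. \cite[Corollary~3.3]{AHMRFactor}, already used for Lemma~\ref{pointwiseestimate}), and I would simply cite it rather than reprove it.

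The main obstacle is precisely this last construction: passing from the scalar inequality $\|f\|\le 1$ to an honest contractive pair $(\psi,\varphi)$ of multipliers fails for general reproducing kernel Hilbert spaces and genuinely needs the complete Pick structure, via the realization/Leech theorem. Everything else---the Schwarz--Pick bound for $\psi$, the Neumann series, the telescoping estimate, and the weak-$*$ compactness step---is routine.
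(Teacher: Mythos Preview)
The paper does not give a proof of this theorem at all: it is stated as a recalled result, introduced by ``Finally we recall a special case of \cite[Theorem 1.1 (i)]{AHMRFactor}.'' So your plan to deduce the statement by citing \cite{AHMRFactor} is exactly what the paper does, and for the substantive direction (i)$\Rightarrow$(ii) your proposal matches the paper's approach.

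Your additional direct argument for (ii)$\Rightarrow$(i), however, has a gap. The telescoping estimate correctly gives
\[
\sum_{k\ge 0}\|\psi^k\varphi\|^2 \le \|\mathbf 1\|^2 = 1,
\]
but this is a bound on the $\ell^2$-sum of the norms, not on $\|f_N\|^2=\bigl\|\sum_{k=0}^N \psi^k\varphi\bigr\|^2$. The summands $\psi^k\varphi$ are not mutually orthogonal in a general complete Pick space, so the cross terms $\langle \psi^j\varphi,\psi^k\varphi\rangle$ for $j\ne k$ must be controlled. Your parenthetical about $\psi(w_0)=0$ only yields $\langle \psi u,\mathbf 1\rangle=0$, i.e.\ each $\psi^k\varphi$ (for $k\ge 1$) is orthogonal to the constant $\mathbf 1$; it says nothing about orthogonality of $\psi^j\varphi$ to $\psi^k\varphi$. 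Consequently the claimed uniform bound on $\|f_N\|$ is not established, and even if you obtained some uniform bound $C$, weak lower semicontinuity would only yield $\|f\|\le C$, not $\|f\|\le 1$. Since the paper simply cites \cite{AHMRFactor} for both directions, the cleanest fix is to do the same and drop the direct argument.
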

A direct application we will use frequently is that each complete Pick space $\HH$ is contained in the corresponding Pick-Smirnov class
 $$N^+(\HH)=\{\varphi/\psi : \varphi, \psi \in \mathrm{Mult}(\HH), \psi \text{ cyclic}\},$$ see Lemma~\ref{cyclicSubspace} below.

\subsection{Cyclic subspaces} The aim of this subsection is to show that for radial Besov spaces which are also  complete Pick spaces there is variant of Theorem \ref{AbsoluteValue} which refers to functions in $\HH$ rather than multipliers.
We begin by listing three observations regarding subspaces of the form $[f]$.

  \begin{lemma} \label{cyclicSubspace} Let $\HH$ be a separable Hilbert function space on $X$. If $f= \varphi/(1-\psi)\in \HH$, where  $\varphi, \psi \in \rm{Mult}(\HH)$ and $ \psi\ne 1$, $\|\psi\|_{\rm{Mult}(\HH)}\le 1$, then $1-\psi$ is cyclic in $\HH$ and $[f]=[\varphi]$.
   \end{lemma}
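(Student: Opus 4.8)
\textbf{Proof plan for Lemma \ref{cyclicSubspace}.}

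The plan is to prove the two assertions in sequence: first that $1-\psi$ is cyclic, and then, using this, that $[f]=[\varphi]$. For the cyclicity of $1-\psi$, I would exploit the hypothesis $\|\psi\|_{\mathrm{Mult}(\HH)}\le 1$, which allows a Neumann-series argument to show that $1\in[1-\psi]$, hence that $1-\psi$ is cyclic. More precisely, consider the partial sums $s_n=\sum_{j=0}^{n}\psi^j$; these are multipliers, and $(1-\psi)s_n = 1-\psi^{n+1}$ lies in $[1-\psi]$. The point is then that $\psi^{n+1}\to 0$ in an appropriate sense: since $\|\psi\|_{\mathrm{Mult}(\HH)}\le 1$ and $\psi\ne 1$, the contraction $M_\psi$ has no eigenvalue $1$ on the constant function, and in fact $|\psi(z)|<1$ for all $z\in X$ except possibly on a negligible set, so that $\psi^{n+1}\to 0$ pointwise while remaining bounded by $1$ in multiplier (hence $\HH$-) norm. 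A weak-limit argument — $\psi^{n+1}$ is bounded in $\HH$, so has a weakly convergent subnet, and the weak limit must be the pointwise limit $0$ — then gives $1-\psi^{n+1}\to 1$ weakly in $\HH$, so $1\in[1-\psi]$ and $1-\psi$ is cyclic.

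For the second assertion, $[f]=[\varphi]$, one inclusion is easy: since $f=\varphi/(1-\psi)$ and, by the first part, there are multipliers $s_n$ with $(1-\psi)s_n\to 1$ appropriately, one gets $\varphi = (1-\psi)f$, so $\varphi\in[f]$ (as $1-\psi\in\mathrm{Mult}(\HH)$ and $f\in[f]$), whence $[\varphi]\subseteq[f]$. Conversely, $f = \varphi\cdot\frac{1}{1-\psi}$ formally, and the rigorous version is that $s_n\varphi = \varphi\sum_{j=0}^n\psi^j$ is a multiplier times $\varphi$, hence lies in $[\varphi]$, and $s_n\varphi = f(1-\psi^{n+1})\to f$ weakly in $\HH$ by the same pointwise-plus-bounded argument as above (the sequence $s_n\varphi$ is bounded in $\HH$ because $\|s_n\varphi\|_\HH = \|f(1-\psi^{n+1})\|_\HH \le \|f\|_\HH(1+\|\psi\|_{\mathrm{Mult}(\HH)}^{n+1})\le 2\|f\|_\HH$, using $\|\psi\|_{\mathrm{Mult}(\HH)}\le 1$). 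Therefore $f\in[\varphi]$, giving $[f]\subseteq[\varphi]$, and equality follows.

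The main obstacle I anticipate is justifying the weak convergence $\psi^{n+1}\to 0$, i.e., making precise that $\psi^{n+1}\to 0$ pointwise on $X$. This requires knowing that $|\psi(z)|<1$ for every $z\in X$ (or at least off a set that is invisible to $\HH$): if $\|\psi\|_{\mathrm{Mult}(\HH)}\le 1$ then $\|\psi\|_\infty\le 1$, so $|\psi(z)|\le 1$ everywhere, but one must rule out $|\psi(z_0)|=1$ at some point $z_0$. Since $f=\varphi/(1-\psi)\in\HH$ is a genuine function on $X$, the denominator $1-\psi$ cannot vanish on $X$, so $\psi(z)\ne 1$ for all $z$; combined with $|\psi(z)|\le 1$ this forces $|\psi(z)|<1$ unless $\psi(z)$ is a unimodular constant $\ne 1$, which is again excluded because then $1-\psi$ would be a nonzero constant there but the maximum-principle-type rigidity of multiplier contractions (or simply: a contractive multiplier attaining modulus $1$ at a point of a connected analytic variety is a unimodular constant, hence $=1$ on $\HH$, contradicting $\psi\ne 1$) pins it down. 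Once $|\psi(z)|<1$ pointwise is in hand, dominated convergence against any fixed $k_w$ gives $\langle\psi^{n+1},k_w\rangle = \psi(w)^{n+1}\to 0$, and a bounded sequence converging weakly on the dense set $\{k_w\}$ converges weakly, completing the argument.
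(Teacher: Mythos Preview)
The paper does not give a self-contained proof here; it simply cites Lemma~2.3 of \cite{AHMRsccps} and Lemma~3.6(a) of \cite{AHMRwp}. Your Neumann-series/weak-limit approach is along the right lines, and the second half ($[f]=[\varphi]$) is fine once the first half is in place. But there is a genuine slip in the first half.

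Your argument hinges on $|\psi(z)|<1$ for every $z\in X$, so that $\psi^{n+1}\to 0$ pointwise. The justification you give is faulty: you argue that if $|\psi(z_0)|=1$ then $\psi$ is a unimodular constant, ``hence $=1$ on $\HH$, contradicting $\psi\ne 1$''. But a unimodular constant need not be $1$; $\psi\equiv -1$ satisfies all the hypotheses, and then $\psi^{n+1}=(-1)^{n+1}$ does not tend to $0$, so your weak-limit argument breaks down even though the lemma is (trivially) true in that case. More seriously, the lemma is stated for an arbitrary separable Hilbert function space on a set $X$, where no maximum principle is available, so $|\psi(z_0)|=1$ need not force $\psi$ to be constant at all.

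Both issues are easy to fix. In the analytic setting you can just handle the case $\psi\equiv c$, $|c|=1$, $c\ne 1$, separately ($1-\psi$ is then a nonzero scalar and everything is trivial). A repair that works in the general setting, using only that $\psi(z)\ne 1$ for all $z$ (which is implicit in $f=\varphi/(1-\psi)$ being a well-defined element of $\HH$), is to pass to Ces\`aro means: by the mean ergodic theorem for the contraction $M_\psi$,
\[
\frac{1}{N}\sum_{n=0}^{N-1}(1-\psi^n)\;=\;1-\frac{1}{N}\sum_{n=0}^{N-1}\psi^n\ \longrightarrow\ 1-P1
\]
strongly in $\HH$, where $P$ is the projection onto $\ker(I-M_\psi)=\{h:(1-\psi)h=0\}=\{0\}$. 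Since each $1-\psi^n\in[1-\psi]$, this yields $1\in[1-\psi]$. The same device applied to $f$ in place of $1$ gives $f\in[\varphi]$ without any hypothesis on $|\psi|$.
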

\begin{proof}
This is a  is a straightforward combination of Lemma 2.3 of \cite{AHMRsccps} and Lemma 3.6 (a) of \cite{AHMRwp}.
\end{proof}

\begin{lemma} \label{multIndep} Let $\HH$ be a separable Hilbert function space on $X$. If  $f=\frac{u}{v}= \frac{u_1}{v_1}\in N^+(\HH)$, where $u, v, u_1, v_1\in \rm{Mult}(\HH)$, $v, v_1$ cyclic, then  $[u^n]=[u_1^n]$, for all $n\in  \mathbb{N}$.
\end{lemma}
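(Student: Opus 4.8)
The plan is to prove $u^n\in[u_1^n]$ for every $n\in\N$; since the hypotheses are symmetric in $(u,v)$ and $(u_1,v_1)$, the same argument gives $u_1^n\in[u^n]$, and hence $[u^n]=[u_1^n]$. The starting point is the cross-multiplication identity $uv_1=u_1v$, which holds as an identity of functions on $X$: it certainly holds wherever $v$ and $v_1$ are nonzero, and $v,v_1$ have no zeros on $X$ because they are cyclic (a zero of $v$ would be a common zero of every element of $[v]=\HH$, which is impossible since $1\in\HH$). Raising this to the $n$-th power gives
$$u^n v_1^n=u_1^n v^n.$$

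Next I would record that powers of a cyclic multiplier are again cyclic. If $w\in\Mult(\HH)$ is cyclic and $[w^k]=\HH$, then, using $1\in\HH$, there are $\psi_j\in\Mult(\HH)$ with $\psi_j w^k\to 1$ in $\HH$; since $w\in\Mult(\HH)$, multiplying by $w$ yields $\psi_j w^{k+1}\to w$, so $w\in[w^{k+1}]$ and therefore $[w^{k+1}]\supseteq[w]=\HH$. By induction $[v_1^n]=\HH$ for all $n$; in particular there are multipliers $\varphi_j$ with $\varphi_j v_1^n\to 1$ in $\HH$.

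For the main step, observe that $u^n\in\Mult(\HH)$, so multiplication by $u^n$ is a bounded operator on $\HH$ and hence
$$u^n\varphi_j v_1^n\longrightarrow u^n\qquad\text{in }\HH.$$
On the other hand, using the identity $u^n v_1^n=u_1^n v^n$ and the fact that $\varphi_j v^n$ is a product of multipliers,
$$u^n\varphi_j v_1^n=\varphi_j\,(u^n v_1^n)=\varphi_j\,(u_1^n v^n)=(\varphi_j v^n)\,u_1^n\in[u_1^n].$$
Since $[u_1^n]$ is closed, $u^n\in[u_1^n]$, which finishes the direction and, by symmetry, the lemma.

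I do not expect a genuine obstacle; the argument is short. The one point that needs care is the bookkeeping in the last display — keeping $u_1^n$ as the distinguished generator while collapsing $\varphi_j v^n$ into a single new multiplier — together with the fact that the approximation $\varphi_j v_1^n\to 1$ must be in the $\HH$-norm so that boundedness of $M_{u^n}$ transfers it to $u^n\varphi_j v_1^n\to u^n$. The only structural input beyond "$u,v,u_1,v_1$ are multipliers and $v,v_1$ are cyclic" is that $1\in\HH$ (so that cyclicity passes to powers and so that cyclic multipliers are zero-free), which is available in every setting in which $N^+(\HH)$ is used here, namely normalized complete Pick spaces.
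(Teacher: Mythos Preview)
Your argument is correct and is precisely the approach the paper takes: cross-multiply to get $u^n v_1^n = u_1^n v^n \in [u_1^n]$, then use that $v_1^n$ is cyclic and $u^n$ is a multiplier to conclude $u^n\in[u_1^n]$, and finish by symmetry. You have simply written out in full the step the paper compresses into the phrase ``since $u^n$ is a multiplier, it easily follows that $u^n\in[u_1^n]$,'' and you have also correctly flagged the implicit use of $1\in\HH$ in showing that powers of cyclic multipliers remain cyclic.
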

\begin{proof}  We have $u^nv_1^n=v^nu^n_1$, hence $u^nv^n_1 \in [v^nu^n_1]\subseteq [u^n_1]$. Since $v_1$ is cyclic, so is $v_1^n$, and  since $u^n$ is a multiplier,  it easily follows that $u^n\in[u^n_1]$. By symmetry $u^n_1\in [u^n]$, hence $[u^n]=[u^n_1]$.
\end{proof}

  \begin{lemma} \label{multIndep1}  Let $\HH$ be a complete Pick space.
   If $f=\frac{u}{v}\in \HH$, where $u, v\in \rm{Mult}(\HH)$ and $v$ is cyclic, then $[u]=[f]$.
  \end{lemma}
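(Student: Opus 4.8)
The plan is to reduce the statement to two ingredients that are already available: the Sarason-type factorization of Theorem~\ref{SarasonAnalog} together with Lemma~\ref{cyclicSubspace} (which handles representations of the special form $\varphi/(1-\psi)$), and Lemma~\ref{multIndep} (which says that two $N^+(\HH)$-representations of the same function with cyclic denominators generate the same subspaces from their numerators).

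First I would make a few harmless normalizations. If $f=0$, then $u=fv=0$ (here $v\neq 0$ since $v$ is cyclic), so $[u]=[f]=(0)$ and there is nothing to prove; thus assume $f\neq 0$. Replacing the norm of $\HH$ by an equivalent Hilbert norm changes neither $\mathrm{Mult}(\HH)$ as an algebra of functions, nor which closed subspaces are $\mathrm{Mult}(\HH)$-invariant, nor which functions are cyclic; so we may assume that the reproducing kernel of $\HH$ is a normalized complete Pick kernel, and in particular that $k_{w_0}=1$ for some $w_0\in X$. Finally, rescaling $f$ by a nonzero constant rescales $u$, leaves $v$ unchanged, and changes neither $[f]$ nor $[u]$, so we may also assume $\|f\|_\HH\le 1$.

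Next I would invoke Theorem~\ref{SarasonAnalog} to write $f=\varphi/(1-\psi)$ with $\varphi,\psi\in\mathrm{Mult}(\HH)$, $\psi(w_0)=0$, and $\|\psi h\|^2+\|\varphi h\|^2\le\|h\|^2$ for all $h\in\HH$; in particular $\|\psi\|_{\mathrm{Mult}(\HH)}\le 1$ and $\psi\neq 1$. Lemma~\ref{cyclicSubspace} then gives that $1-\psi$ is cyclic and that $[f]=[\varphi]$. At this point the identity $u/v=f=\varphi/(1-\psi)$ exhibits $f$ as an element of $N^+(\HH)$ with the two cyclic denominators $v$ and $1-\psi$, so Lemma~\ref{multIndep} (applied with $n=1$) yields $[u]=[\varphi]$. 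Chaining the two equalities gives $[u]=[\varphi]=[f]$, which is the claim.

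I do not expect a serious obstacle: all the analytic substance has been front-loaded into Theorem~\ref{SarasonAnalog} and Lemmas~\ref{cyclicSubspace} and \ref{multIndep}. The only point needing a little care is the passage to a genuinely normalized complete Pick kernel, i.e.\ checking that renorming to an equivalent Hilbert norm is invisible to the objects appearing in the statement ($\mathrm{Mult}(\HH)$, the lattice of invariant subspaces, cyclicity), and — correspondingly — that the consequences of condition (c) of Theorem~\ref{SarasonAnalog} are being read off in the same renormed norm in which Lemma~\ref{cyclicSubspace} is then applied.
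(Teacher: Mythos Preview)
Your proposal is correct and follows essentially the same route as the paper: apply Theorem~\ref{SarasonAnalog} to write $f=\varphi/(1-\psi)$, use Lemma~\ref{cyclicSubspace} to get $[f]=[\varphi]$, and then Lemma~\ref{multIndep} with $n=1$ to get $[u]=[\varphi]$. Your added remarks about the harmless normalizations (renorming to a normalized complete Pick kernel, rescaling so that $\|f\|\le 1$, and disposing of $f=0$) are exactly the tacit preliminaries the paper's proof leaves implicit.
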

  \begin{proof} Let $f=\frac{u}{v}$. By Theorem \ref{SarasonAnalog} there are $\varphi, \psi\in \text{\rm Mult}(\HH)$ such that $\|\psi\|_{\rm{Mult}(\HH)}\le 1$, $\psi \ne 1$, and
  $f= \frac{ \varphi}{1-\psi}$. By Lemma \ref{cyclicSubspace} we have $[f]=[\varphi]$, hence we have to show $[u]=[\varphi]$ and that follows from Lemma \ref{multIndep}.
  \end{proof}

With these lemmas in hand we can turn to the main result of this subsection, which contains Theorem~\ref{AbsoluteValuePickIntro} as a special case.

\begin{theorem} \label{AbsoluteValuePick} Let $N\in\N$, and let $B^N_\om$ be a radially weighted Besov space that is also a complete Pick space. Let $f,g\in B^N_\om$ be such that $f/g\in H^\infty$.

If  $f=\frac{u}{v}$, where $u, v\in \mathrm{Mult}(B^N_\om)$, $v$ is cyclic, then $u^{N-1}f\in [g]$.	If $u\in \HC_n(B^N_\om)$ for some $1\le n\le N$, then $u^{n-1}f\in [g]$.
\end{theorem}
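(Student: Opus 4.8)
The plan is to reduce everything to Theorem~\ref{AbsoluteValue} by exploiting the complete Pick structure, and then to tidy up with the invariant-subspace identities of Lemmas~\ref{cyclicSubspace}--\ref{multIndep1}. Since $B^N_\om$ is a complete Pick space it is contained in its Pick-Smirnov class, so I would first write $g = u_1/v_1$ with $u_1, v_1 \in \mathrm{Mult}(B^N_\om)$ and $v_1$ cyclic; by Lemma~\ref{multIndep1} this yields $[g] = [u_1]$. (If $f \equiv 0$ the conclusion is trivial, so assume $f \ne 0$.) Put $\varphi = u v_1$ and $\psi = u_1 v$, which are multipliers of $B^N_\om$ with
$$\frac{\varphi}{\psi} = \frac{u v_1}{u_1 v} = \frac{u/v}{u_1/v_1} = \frac{f}{g} \in H^\infty(\Bd).$$
Applying Theorem~\ref{AbsoluteValue} with $k=1$ gives $\varphi^N \in [\psi] = [u_1 v] \subseteq [u_1] = [g]$, that is, $u^N v_1^N \in [g]$.

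Next I would strip off the cyclic factor: since $v_1$, hence $v_1^N$, is cyclic, there are multipliers $p_j$ with $p_j v_1^N \to 1$ in $B^N_\om$, so that $p_j(u^N v_1^N) \to u^N$ while each $p_j(u^N v_1^N)$ lies in the invariant subspace $[g]$; therefore $u^N \in [g]$. Now $u^{N-1} f = u^N/v$ lies in $B^N_\om$ (it is $u^{N-1}$ applied to $f$), so Lemma~\ref{multIndep1} gives $[u^{N-1} f] = [u^N/v] = [u^N] \subseteq [g]$, which is the first assertion. For the second, assume $u \in \HC_n(B^N_\om)$ with $1 \le n \le N$. The identity $[u^n] = [u^{n+1}]$ propagates upward by the elementary argument behind \eqref{CnInclusions} (from $p_j u^{n+1} \to u^n$ one gets $p_j u^{n+2} \to u^{n+1}$, so $[u^{n+1}] = [u^{n+2}]$, and inductively $[u^n] = [u^m]$ for all $m \ge n$); in particular $[u^n] = [u^N] \subseteq [g]$, so $u^n \in [g]$, and then $[u^{n-1} f] = [u^n/v] = [u^n] \subseteq [g]$ exactly as before, giving $u^{n-1} f \in [g]$.

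All of the genuine content is imported: Theorem~\ref{AbsoluteValue}, and behind it the multiplier-ratio estimate of Lemma~\ref{LemRatioEstimate}, together with the fact that a complete Pick space lies in its Pick-Smirnov class (Theorem~\ref{SarasonAnalog}, via Lemma~\ref{cyclicSubspace}). The remaining work is only bookkeeping of invariant subspaces, and the two points needing attention are: checking that $u^{N-1} f$ (resp.\ $u^{n-1} f$) really belongs to $B^N_\om$ before invoking Lemma~\ref{multIndep1}, and using at each cancellation step that an element of $[g]$ times the ``reciprocal of a cyclic multiplier'' stays in $[g]$ -- which is precisely what Lemmas~\ref{multIndep}--\ref{multIndep1} provide. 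I do not expect any essential difficulty beyond this.
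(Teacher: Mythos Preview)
Your proof is correct and follows essentially the same route as the paper's: you write $g=u_1/v_1$ (the paper writes $g=a/b$), apply Theorem~\ref{AbsoluteValue} to the multipliers $\varphi=uv_1$, $\psi=u_1v$ with $\varphi/\psi=f/g\in H^\infty$, strip off the cyclic factors using Lemma~\ref{multIndep1}, and then propagate $[u^n]=[u^{n+1}]$ upward for the $\HC_n$ case. The paper's argument is a bit terser but identical in content.
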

 \begin{proof} Write $f=\frac{u}{v},~g= \frac{a}{b}$ with $a,b,u,v\in \text{\rm{Mult}}(B_\om^N)$, $v$ and $b$ cyclic.
 	By Lemma \ref{multIndep1} we have that $[f] = [u]$ and $[g]=[a]$.  	
 	Theorem \ref{AbsoluteValue} with $k=1$ gives us that $b^N u^N \in [va] = [a]$, and thus that $u^N \in [a] = [g]$. Now Lemma \ref{multIndep1} implies $u^{N-1}f=\frac{u^N}{v}\in [u^N]\subseteq [g]$.

 If $u \in \HC_n(B^N_\om)$, then $u^{n}\in [u^{n+k}]$ for all nonnegative integers $k$, and hence  $u^n\in [u^N]\subseteq [g]$. As above this and Lemma~\ref{multIndep1} implies that $u^{n-1}f \in [g]$.
 \end{proof}

\subsection{Inner factors of slices}\label{Sec:PickSlices} As usual, for a function $u:\mathbb{B}_d\to\mathbb{C}$ and $z\in \partial\mathbb{B}_d$ the corresponding slice function $u_z:\mathbb{D}\to\mathbb{C}$ is given by $$u_z(\lambda)=u(\lambda z),\quad \lambda\in \mathbb{D}.$$
The following simple observation follows directly from Lemma \ref{pointwiseestimate}.
\begin{lemma} \label{sliceest} Let $N\in\N$, and let $B^N_\om$ be a radially weighted Besov space that is also a complete Pick space. Then there is $c>0$ such that whenever  $f\in B^N_\om$, then every  slice $f_z \in H^2(\D)$ with $$\|f_z\|_{H^2(\D)}\le c\|f\|_{B^N_\om}, \quad z\in \partial\mathbb{B}_d.$$	
\end{lemma}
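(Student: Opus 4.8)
The plan is to obtain the slice bound by averaging the pointwise inequality of Lemma~\ref{pointwiseestimate} over circles. Since $B^N_\om$ is a complete Pick space, and since these spaces are rotation invariant with radial reproducing kernels, I would first equip $B^N_\om$ with an equivalent Hilbert norm $\|\cdot\|_0$ whose reproducing kernel is a \emph{radial} normalized complete Pick kernel, say $k_w(z)=\sum_{n\ge0}a_n\langle z,w\rangle^n$ with $a_0=1$ and $a_n>0$; this is the form in which the complete Pick property of these spaces is established in \cite{AHMR_RadiallyWeightedBesov}. Fix $c>0$ with $\|h\|_0\le c\|h\|_{B^N_\om}$ for all $h\in B^N_\om$.

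Now fix $z\in\partial\mathbb{B}_d$, $f\in B^N_\om$, and $0<\rho<1$. Applying Lemma~\ref{pointwiseestimate} in $(B^N_\om,\|\cdot\|_0)$ at the point $w=\rho e^{it}z$ yields
\begin{equation*}
|f(\rho e^{it}z)|^2\le 2\operatorname{Re}\langle f,k_{\rho e^{it}z}f\rangle_0-\|f\|_0^2,\qquad t\in[0,2\pi],
\end{equation*}
and I would integrate this over $t$ against $\tfrac{dt}{2\pi}$. The point is that the kernels average to the constant function: because $k_{\rho e^{it}z}(x)=\sum_n a_n\rho^n e^{-int}\langle x,z\rangle^n$ and $|z|=1$, evaluating at any $x\in\mathbb{B}_d$ gives $\int_0^{2\pi}k_{\rho e^{it}z}(x)\,\tfrac{dt}{2\pi}=a_0=1$, so, as a $B^N_\om$-valued integral, $\int_0^{2\pi}k_{\rho e^{it}z}f\,\tfrac{dt}{2\pi}=f$. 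Moving the integral inside the inner product therefore gives
\begin{equation*}
\int_0^{2\pi}|f(\rho e^{it}z)|^2\,\frac{dt}{2\pi}\le 2\operatorname{Re}\langle f,f\rangle_0-\|f\|_0^2=\|f\|_0^2\le c^2\|f\|_{B^N_\om}^2 .
\end{equation*}
Letting $\rho\to1$, the left-hand side increases to $\|f_z\|_{H^2(\mathbb{D})}^2$; hence $f_z\in H^2(\mathbb{D})$ with $\|f_z\|_{H^2(\mathbb{D})}\le c\|f\|_{B^N_\om}$, and $c$ does not depend on $z$.

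The step requiring the most care is the interchange of integration and inner product, i.e.\ $\int_0^{2\pi}\langle f,k_{\rho e^{it}z}f\rangle_0\,\tfrac{dt}{2\pi}=\big\langle f,\int_0^{2\pi}k_{\rho e^{it}z}f\,\tfrac{dt}{2\pi}\big\rangle_0$. For fixed $\rho<1$ the integrand $t\mapsto k_{\rho e^{it}z}f$ is bounded in $B^N_\om$ — by Lemma~\ref{kernelMult}, $\|k_w\|_{\Mult(B^N_\om)}$ is bounded for $w$ in the compact set $\rho\,\overline{\mathbb{B}_d}$ — and weakly continuous, hence Bochner integrable, and the value of the Bochner integral is identified by evaluating at points of $\mathbb{B}_d$; this makes the interchange routine. (Alternatively one can avoid integration altogether: a short induction on the recursion $a_n=\sum_{k=1}^n c_ka_{n-k}$, where $1-1/\sum_k a_kt^k=\sum_{k\ge1}c_kt^k$ has nonnegative coefficients summing to at most $1$, gives $a_n\le a_0=1$, and then $\|f_z\|_{H^2(\mathbb{D})}^2=\sum_n|f_n(z)|^2\le\sum_n\|f_n\|_{H^2_d}^2\le\|f\|_0^2$, using that $|f_n(z)|\le\|f_n\|_{H^2_d}$ for $z\in\partial\mathbb{B}_d$ and that the degree-$n$ part of $\|\cdot\|_0^2$ equals $a_n^{-1}\|\cdot\|_{H^2_d}^2$.)
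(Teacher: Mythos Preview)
Your proof is correct and follows essentially the same approach as the paper. Both apply Lemma~\ref{pointwiseestimate} at $w=\lambda z$ and then exploit the resulting bound; the paper simply observes that the right-hand side $2\operatorname{Re}\langle f,k_{\lambda z}f\rangle_0-\|f\|_0^2$ is harmonic in $\lambda$ with value $\|f\|_0^2$ at $0$, so $|f_z|^2$ has a harmonic majorant and $\|f_z\|_{H^2}^2\le\|f\|_0^2$. Your circular averaging and the computation $\int_0^{2\pi}k_{\rho e^{it}z}\,\tfrac{dt}{2\pi}=1$ are exactly this mean value property written out by hand, so the Bochner-integral justification can be replaced by the one-line harmonicity observation.
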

\begin{proof} Let $f\in B^N_\om,~z\in \partial\mathbb{B}_d$. By Lemma \ref{pointwiseestimate} we have for all $\lambda\in \mathbb{D}$
	$$|f_z(\lambda)|^2\le  2\text{\rm Re} \langle f, k_{\lambda w} f\rangle -\|f\|^2,$$
	for a suitable equivalent norm on $B^N_\om$ and the induced scalar product. The right-hand side is harmonic in $\lambda$ and its value at 0 equals   $\|f\|^2$.
 Thus $|f_z|^2$ has a harmonic majorant in the unit disc, that is, $f_z\in H^2(\D)$ with $\|f_z\|^2_{H^2(\D)}\le \|f\|^2\le c\|f\|^2_{B^N_\om}$, where the constant $c$ appears, because of the equivalence of norms.
\end{proof}
An application of the lemma yields the following result.
\begin{prop} \label{slice-outer} Let $N\in\N$, and let $B^N_\om$ be a radially weighted Besov space that is also a complete Pick space. If $\varphi\in \HC_n(B_\om^N)$ for some $n\ge 0$, then every slice  $\varphi_z,~z\in \partial\mathbb{B}_d$, is an outer function in $H^\infty$.	
\end{prop}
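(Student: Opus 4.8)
The plan is to push the problem down to the unit disc through the slice map and then invoke Beurling's theorem. Fix $\varphi\in\HC_n(B^N_\om)$ and $z\in\partial\mathbb{B}_d$. Two preliminary observations: since $\varphi\in\Mult(B^N_\om)\subseteq H^\infty(\mathbb{B}_d)$, the slice $\Phi:=\varphi_z$ lies in $H^\infty(\mathbb{D})$; and by \eqref{CnInclusions} we have $\HC_n(B^N_\om)\subseteq\HC_\infty(B^N_\om)$, so Lemma~\ref{easyMultFree}(a) yields $\varphi(0)\ne0$, whence $\Phi(0)\ne0$ and $\Phi\not\equiv0$. Write its inner-outer factorization as $\Phi=I\Psi$, with $I$ inner and $\Psi$ outer (with the convention $\Psi^0=1$).

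The main step is to show that $\Phi^n$ belongs to $[\Phi^{n+1}]_{H^2(\mathbb{D})}$, the shift-invariant subspace of $H^2(\mathbb{D})$ generated by $\Phi^{n+1}$. From $[\varphi^n]=[\varphi^{n+1}]$ one gets multipliers $g_j$ with $g_j\varphi^{n+1}\to\varphi^n$ in $B^N_\om$; when $n=0$ this reads $g_j\varphi\to1$, which is available because the multipliers are dense in the complete Pick space $B^N_\om$, so $[\varphi]=[1]=B^N_\om$. Applying the slice map $f\mapsto f_z$, which is bounded from $B^N_\om$ into $H^2(\mathbb{D})$ by Lemma~\ref{sliceest} and satisfies $(fg)_z=f_zg_z$ for pointwise products, gives $(g_j)_z\,\Phi^{n+1}\to\Phi^n$ in $H^2(\mathbb{D})$. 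Now each $(g_j)_z$ lies in $H^\infty(\mathbb{D})$, and for every $h\in H^2(\mathbb{D})$ and $\psi\in H^\infty(\mathbb{D})$ one has $\psi h\in[h]_{H^2(\mathbb{D})}$: the dilates $\psi_r$ are uniform limits of polynomials on $\overline{\mathbb{D}}$, so $\psi_rh\in[h]_{H^2(\mathbb{D})}$, and $\psi_rh\to\psi h$ in $H^2(\mathbb{D})$ by dominated convergence. Passing to the limit, $\Phi^n\in[\Phi^{n+1}]_{H^2(\mathbb{D})}$.

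Finally, since $\Phi^{n+1}=I^{n+1}\Psi^{n+1}$ with $I^{n+1}$ inner and $\Psi^{n+1}$ outer, Beurling's theorem gives $[\Phi^{n+1}]_{H^2(\mathbb{D})}=I^{n+1}H^2(\mathbb{D})$. Then $\Phi^n=I^n\Psi^n\in I^{n+1}H^2(\mathbb{D})$ forces $\Psi^n/I\in H^2(\mathbb{D})\subseteq N^+$, and comparing inner-outer factorizations in the identity $\Psi^n=I\cdot(\Psi^n/I)$ shows that $I$ divides the (unimodular constant) inner factor of the outer function $\Psi^n$; hence $I$ is a unimodular constant, and $\varphi_z=\Phi$ is outer.

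I do not expect a deep obstacle here; the only point requiring care is the transition from multiplier-level convergence in $B^N_\om$ to membership in the cyclic subspace $[\Phi^{n+1}]_{H^2(\mathbb{D})}$. One must verify that multiplication by an $H^\infty(\mathbb{D})$ function keeps one inside that subspace, and observe that no uniform bound on $\|(g_j)_z\|_{H^\infty(\mathbb{D})}$ is needed, since the $H^2(\mathbb{D})$-convergence already does the job. The bookkeeping for the edge case $n=0$ (where $\Phi^0=1$ and one needs $1\in[\Phi]_{H^2(\mathbb{D})}$) should also be stated explicitly.
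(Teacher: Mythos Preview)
Your proof is correct and follows essentially the same approach as the paper: both push the relation $[\varphi^n]=[\varphi^{n+1}]$ down to $H^2(\mathbb{D})$ via the slice estimate (Lemma~\ref{sliceest}) and then conclude by the inner--outer factorization. The paper simply compresses your Beurling computation into the one-line remark that $\varphi_z\in\HC_n(H^2(\mathbb{D}))$ forces $\varphi_z$ to be outer, a fact it recorded earlier in Section~\ref{sec:prelim}.
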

\begin{proof} By assumption we have that there is a sequence $(u_k)$ in $\text{\rm Mult}(B_\om^N)$ such that $(u_k\varphi^{n+1})$ converges to $\varphi^n$ in $B^N_\om$. Then Lemma  \ref{pointwiseestimate}
	implies that for every $z\in \partial\mathbb{B}_d$, $(u_k)_z\varphi^{n+1}_z\to \varphi_z^n$ in $H^2(\D)$. Since  $(u_k)_z\in H^\infty$, this   shows that $\varphi_z\in C_n(H^2(\D))$, i.e. it is  outer.	
\end{proof}
It is natural to ask whether this result continues to hold under the weaker assumption that the multiplier $\varphi$ belongs to $\HC_\infty(B_\om^N)$? Of course the answer is affirmative in the one variable case. However, it turns out that this is no longer the case when $d>1$. Our counterexample is based on the following result about the structure of the Drury-Arveson space $H_d^2$. It is a precise version of the decomposition used in the remarks after Theorem 4.3. of \cite{GrRiSu}. For $n\ge 0$ let $\HK_n$ be the space of analytic functions on the unit disc with reproducing kernel $k_\lambda(z)=(1-\overline{\lambda}z)^{-n-1}$.
\begin{lemma}   Let   $d\in \N,~d>1$. For  $\alpha\in \mathbb{N}_0^{d-1}$, $z=(z_1,\ldots,z_{d-1})\in \mathbb{C}^{d-1}$, let $e_\alpha(z_1,\ldots, z_{d-1})=\sqrt{\frac{\alpha!}{|\alpha|!}} z^\alpha$. Then the map from $\HK_{|\alpha|}$ to $H_d^2$,
	$$f\to g,\quad g(z)=e_\alpha(z_1,\ldots,z_{d-1})f(z_d),$$
	is an isometry, and
		$$H_d^2=\bigoplus_{\alpha\in \mathbb{N}_0^{d-1}}e_\alpha\HK_{|\alpha|}.$$
\end{lemma}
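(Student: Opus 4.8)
The plan is to reduce the statement to two elementary facts: the monomials form an orthogonal basis of each space involved, and the resulting factorial weights match up after the rescaling by $e_\alpha$. First I would record the two weight sequences. From the power-series description of $\|\cdot\|_{H^2_d}$ the monomials are orthogonal in $H^2_d$ with $\|z^\beta\|^2_{H^2_d}=\beta!/|\beta|!$; writing a multi-index as $\beta=(\alpha,k)\in\N_0^{d-1}\times\N_0$ and $z=(z',z_d)$ with $z'=(z_1,\dots,z_{d-1})$, this reads $\|(z')^{\alpha}z_d^{\,k}\|^2_{H^2_d}=\alpha!\,k!/(|\alpha|+k)!$. On the disc side, expanding the kernel of $\HK_n$ as $(1-\bl z)^{-n-1}=\sum_{k\ge0}\binom{n+k}{k}(\bl z)^k$ shows that $1,z,z^2,\dots$ is an orthogonal basis of $\HK_n$ with $\|z^k\|^2_{\HK_n}=\binom{n+k}{k}^{-1}=k!\,n!/(n+k)!$.

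Next I would verify the isometry claim directly on these basis vectors. Fix $\alpha\in\N_0^{d-1}$, write $c_\alpha$ for the normalizing constant in $e_\alpha$, so $e_\alpha(z')=c_\alpha (z')^\alpha$, and let $V_\alpha\colon\HK_{|\alpha|}\to\Hol(\Bd)$ be $V_\alpha f(z)=e_\alpha(z')f(z_d)$. Then $V_\alpha$ sends the orthogonal basis $\{z^k\}_{k\ge0}$ of $\HK_{|\alpha|}$ to the functions $e_\alpha(z')z_d^{\,k}$, which are scalar multiples of distinct monomials of $H^2_d$ and hence pairwise orthogonal there, and the weights above give $\|V_\alpha z^k\|^2_{H^2_d}=|c_\alpha|^2\,\alpha!\,k!/(|\alpha|+k)!$. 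This equals $\|z^k\|^2_{\HK_{|\alpha|}}=|\alpha|!\,k!/(|\alpha|+k)!$ exactly when $|c_\alpha|^2=|\alpha|!/\alpha!$, which is the normalization in the statement. Thus $V_\alpha$ carries an orthogonal basis to an orthogonal system of the same norms, so it extends to an isometric embedding of $\HK_{|\alpha|}$ onto the closed subspace $e_\alpha\HK_{|\alpha|}\subseteq H^2_d$ which is the closed linear span of $\{(z')^{\alpha}z_d^{\,k}:k\ge0\}$.

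Then I would assemble the decomposition. For $\alpha\ne\alpha'$ the subspaces $e_\alpha\HK_{|\alpha|}$ and $e_{\alpha'}\HK_{|\alpha'|}$ are closed linear spans of disjoint sets of monomials of $H^2_d$, hence mutually orthogonal. Their closed linear sum therefore contains every monomial $z^\beta=(z')^{\alpha}z_d^{\,k}$, so it contains all polynomials and is dense; being closed, it equals $H^2_d$. Hence $H^2_d=\bigoplus_{\alpha\in\N_0^{d-1}}e_\alpha\HK_{|\alpha|}$ as an orthogonal direct sum, which is the assertion.

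Finally I would note the equivalent reproducing-kernel argument, which is the same computation packaged more cleanly: splitting $\la z,w\ra=\la z',w'\ra+z_d\overline{w_d}$ and expanding $\frac{1}{1-\la z,w\ra}=\sum_{k\ge0}(z_d\overline{w_d})^k(1-\la z',w'\ra)^{-k-1}$ geometrically, then expanding $(1-\la z',w'\ra)^{-k-1}$ by the binomial series and $\la z',w'\ra^{\,j}$ by the multinomial theorem and collecting terms, yields $\frac{1}{1-\la z,w\ra}=\sum_{\alpha\in\N_0^{d-1}}e_\alpha(z')\,\overline{e_\alpha(w')}\,(1-z_d\overline{w_d})^{-|\alpha|-1}$; since the summands are precisely the reproducing kernels of the mutually orthogonal subspaces $e_\alpha\HK_{|\alpha|}$ of $H^2_d$, the standard correspondence between a sum of positive kernels and the direct sum of the associated reproducing kernel Hilbert spaces gives the decomposition. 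I do not anticipate a genuine obstacle — the content is bookkeeping with factorials and binomial coefficients — but the two points to handle with care are tracking the normalizing constant $c_\alpha$ so that $V_\alpha$ is truly norm-preserving (not merely bounded below), and, in the kernel version, checking that the summand spaces really sit isometrically inside $H^2_d$ and are pairwise orthogonal, so that the sum-of-kernels identity produces an \emph{internal} orthogonal decomposition rather than merely an abstract one.
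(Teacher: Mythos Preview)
Your approach is exactly the paper's: verify $\|e_\alpha z_d^{\,k}\|_{H^2_d}=\|z^k\|_{\HK_{|\alpha|}}$ on monomials, note the subspaces are spanned by disjoint monomials hence orthogonal, and observe their union contains all monomials. One small slip: your computation correctly forces $|c_\alpha|^2=|\alpha|!/\alpha!$, but the statement as printed has $e_\alpha=\sqrt{\alpha!/|\alpha|!}\,z^\alpha$, the reciprocal --- this is evidently a typo in the lemma (the intended normalization makes $e_\alpha$ a unit vector in $H^2_d$), and you should flag it rather than assert it matches.
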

\begin{proof} A direct computation reveals that for all $k\in \mathbb{N}$, we have  $$\|w^k\|_{\HK_{|\alpha|}}=\|e_\alpha(z_1,\ldots,z_{d-1})z_d^k\|_{H_d^2}$$
	and since normalized monomials form an orthonormal  basis in $\HK_{|\alpha|}$, the first assertion follows. It is also clear that the subspaces $e_\alpha\HK_{|\alpha|}\subseteq H_d^2$ are pairwise orthogonal and since their sum contains all monomials it must equal the whole space $H_d^2$.	
\end{proof}
It is important to note that $\HK_0=H^2$, while for $|\alpha|>0$ $\HK_{|\alpha|}$ is a weighted Bergman space with $\HK_1=L_a^2$, the unweighted Bergman space on the unit disc. In particular, if $u\in H^\infty$ then the function $$v(z_1,\ldots,z_d)=u(z_d),\quad (z_1,\ldots,z_d)\in \mathbb{B}_d,$$
is a multiplier of $H_d^2$ with $$\|v\|_{\text{\rm
		Mult}(H_d^2)}=\|u\|_\infty.$$
	Finally, for the result below we shall use the well known fact that there exist singular inner functions $\theta\in H^\infty$ such that $\theta$ is cyclic in each of the spaces $\HK_{|\alpha|}$, $|\alpha|>0$ (see \cite{HedKorZhu}).
	\begin{prop}\label{SliceFunctionExample} Let $\theta\in H^\infty$ be singular inner  such that $\theta$ is cyclic in each of the spaces $\HK_{|\alpha|}$ for $|\alpha|>0$, and for $d>1$ set
	 $$\varphi(z_1,\ldots,z_d)=\theta(z_d),\quad (z_1,\ldots,z_d)\in \mathbb{B}_d,$$	
Then \begin{equation}\label{slice-c-infty} [\varphi^n]=\varphi^ne_{(0,\ldots,0)}\HK_0\oplus(H_d^2\ominus e_{(0,\ldots,0)}\HK_0).\end{equation}
In particular, $\varphi\in \HC_\infty(H_d^2)$ but for $z=(0,\ldots,0,1)\in \partial\mathbb{B}_d$ we have that $\varphi_z(\lambda)=\theta(\lambda)$ is an inner function.		
		\end{prop}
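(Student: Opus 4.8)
The plan is to pin down $[\varphi^n]$ exactly, using the orthogonal decomposition $H_d^2=\bigoplus_{\alpha\in\mathbb{N}_0^{d-1}}e_\alpha\HK_{|\alpha|}$ from the lemma above, and then read off both conclusions. The first observation is that under this decomposition $M_\varphi$ acts as $M_\theta$ on each summand $e_\alpha\HK_{|\alpha|}$, since $\varphi(z)=\theta(z_d)$ sends $e_\alpha(z_1,\dots,z_{d-1})f(z_d)$ to $e_\alpha(z_1,\dots,z_{d-1})\theta(z_d)f(z_d)$. Moreover the orthogonal projection $P_0$ onto $e_0\HK_0$ is just restriction to the slice $z_1=\cdots=z_{d-1}=0$, i.e. $(P_0g)(z_d)=g(0,\dots,0,z_d)$; here $e_0\HK_0=\HK_0=H^2$, realized as functions of $z_d$, and $\varphi^ne_0\HK_0=\theta^nH^2$. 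So the right-hand side of \eqref{slice-c-infty} is exactly $\{g\in H_d^2:P_0g\in\theta^nH^2\}$.

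First I would prove the inclusion $\subseteq$ in \eqref{slice-c-infty}. A generator of $[\varphi^n]$ is $\psi\varphi^n$ with $\psi\in\Mult(H_d^2)\subseteq H^\infty(\mathbb{B}_d)$; restricting to the slice gives $(P_0(\psi\varphi^n))(z_d)=\psi(0,\dots,0,z_d)\,\theta(z_d)^n$, and since $\lambda\mapsto\psi(0,\dots,0,\lambda)$ lies in $H^\infty(\mathbb{D})$, this belongs to $\theta^nH^\infty\subseteq\theta^nH^2=\varphi^ne_0\HK_0$. Hence every generator, and therefore all of $[\varphi^n]$ — because $\{g:P_0g\in\theta^nH^2\}$ is closed, $P_0$ being continuous and $\theta^nH^2$ closed ($\theta$ being inner) — lies in $\varphi^ne_0\HK_0\oplus(H_d^2\ominus e_0\HK_0)$.

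For the reverse inclusion I would argue summand by summand. The part $\varphi^ne_0\HK_0=\theta^nH^2$ is in $[\varphi^n]$ because $h(z_d)\varphi^n\in[\varphi^n]$ for every $h\in H^\infty(\mathbb{D})$ (as $h(z_d)\in\Mult(H_d^2)$, by the remark preceding the proposition) and $\theta^nH^\infty$ is dense in $\theta^nH^2$. For fixed $\alpha$ with $|\alpha|>0$, the polynomial $e_\alpha$ and the function $h(z_d)$ are both multipliers of $H_d^2$, so $e_\alpha(z_1,\dots,z_{d-1})h(z_d)\varphi^n\in[\varphi^n]$ for all $h\in H^\infty(\mathbb{D})$; since $f\mapsto e_\alpha f$ is isometric from $\HK_{|\alpha|}$ into $H_d^2$, and $\HK_{|\alpha|}$ is a weighted Bergman space with $\Mult(\HK_{|\alpha|})=H^\infty$ in which $\theta^n$ is cyclic, the closed span of $\{e_\alpha h\theta^n:h\in H^\infty\}$ is all of $e_\alpha\HK_{|\alpha|}$. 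Taking the closed span over $|\alpha|>0$ gives $H_d^2\ominus e_0\HK_0\subseteq[\varphi^n]$, completing \eqref{slice-c-infty}.

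With \eqref{slice-c-infty} established the remaining assertions are immediate: $\bigcap_{n\ge1}[\varphi^n]\supseteq H_d^2\ominus e_0\HK_0\ne(0)$ (it contains $z_1$, as $d>1$), so $\varphi\in\HC_\infty(H_d^2)$; also $[\varphi^n]\ne[\varphi^{n+1}]$ since their $e_0\HK_0$-components $\theta^nH^2\supsetneq\theta^{n+1}H^2$ differ, so $\varphi\notin\HC_n(H_d^2)$ for any $n$; while for $z=(0,\dots,0,1)\in\partial\mathbb{B}_d$ we get $\varphi_z(\lambda)=\varphi(0,\dots,0,\lambda)=\theta(\lambda)$, a non-constant inner function, in particular not outer — so Proposition~\ref{slice-outer} genuinely fails with $\HC_\infty$ in place of $\HC_n$. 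I expect the main point to be the clean identification of $P_0$ with slice-restriction together with the use of $\psi|_{\text{slice}}\in H^\infty(\mathbb{D})$ in the $\subseteq$-direction; the one caveat beyond the literal hypotheses is that, in the $\supseteq$-direction, one needs $\theta^n$ (not merely $\theta$) to be cyclic in each $\HK_{|\alpha|}$, $|\alpha|>0$ — which the standard (Roberts-type) constructions provide, the relevant condition on the associated singular measure being stable under multiplication by positive constants.
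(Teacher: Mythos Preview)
Your proof is correct and follows the same route as the paper: use the orthogonal decomposition $H_d^2=\bigoplus_\alpha e_\alpha\HK_{|\alpha|}$, observe that $M_\varphi$ acts as $M_\theta$ on each summand, and identify $[\varphi^n]$ summand-by-summand. Your write-up is in fact considerably more explicit than the paper's, which simply asserts that $\varphi^n e_\alpha\HK_{|\alpha|}$ is dense in $e_\alpha\HK_{|\alpha|}$ for $|\alpha|>0$ and declares the rest ``self-explanatory.''

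One remark on your closing caveat: you do not need to appeal to Roberts-type constructions to pass from cyclicity of $\theta$ to cyclicity of $\theta^n$ in $\HK_{|\alpha|}$. Since $\Mult(\HK_{|\alpha|})=H^\infty$ for $|\alpha|>0$, if $h_j\theta\to 1$ in $\HK_{|\alpha|}$ then $h_j\theta^n=(h_j\theta)\theta^{n-1}\to\theta^{n-1}$, so $\theta^{n-1}\in[\theta^n]$ and by induction $[\theta^n]=[\theta]=\HK_{|\alpha|}$. This elementary step is exactly what the paper is using implicitly, so the hypothesis as stated suffices.
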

\begin{proof}
The statement is self-explanatory, since $\varphi^ne_\alpha\HK_{|\alpha|}$  is contained and dense in $e_\alpha\HK_{|\alpha|}$ when $|\alpha|>0$, which immediately leads to \eqref{slice-c-infty}. Clearly,
$$\bigcap_{n\ge 1}[\varphi^n] =H_d^2\ominus e_{(0,\ldots,0)}\HK_0,$$
i.e. $\varphi\in \HC_\infty(H_d^2)$.
\end{proof}


\section{Further questions} \label{sec:questions}

We start with the obvious question that we have left open.

\begin{ques} If $d\in \N$, then what is the smallest $n$ such that $\C_{\stable}[z]\subseteq \HC_n(H^2_d)$?\end{ques}

We think of functions in the classes $\HC_n(\HH)$ as $H^2_d$-analogues of functions without inner factors. With this in mind we formulate a weakened form of the Brown-Shields conjecture for $H^2_d$, see \cite{BrSh1984}.
\begin{ques} Given $d\in \N$,  is there is $N\in \N$ such that whenever $f\in \HC_n(H^2_d)$ for some $n$, then  $f\in \HC_N(H^2_d)$?\end{ques}
There are natural related questions.
\begin{ques} If $f\in \Mult(H^2_d)$ such that every slice $f_z$ is outer, then is $f\in \HC_N(H^2_d)$ for some $N$?\end{ques}
We mentioned in the introduction that the analogous questions for the Dirichlet space $D$ have a positive answer. We finish by providing some further evidence that this might extend to $H^2_d$. Let $A^\infty(\D)=\{f\in C^\infty(\overline{\D}): f|\D \text{ analytic}\}$. Then $A^\infty(\D)\subseteq \Mult(D_{\alpha})$ for all $\alpha\in \R$.

\begin{prop}If $f\in A^\infty(\D)$ is outer and if $1\le k \le d$, then $T_{k,d}f\in \HC_{n}(H^2_d)$ for every $n\ge \frac{k-1}{4}$.
\end{prop}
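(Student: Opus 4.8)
The plan is to reduce to one complex variable and then combine Theorem~\ref{f&f_r} with one-variable structure theory. First I would invoke Lemma~\ref{Lem:Tkd}(b): since $T_{k,d}g\in\HC_n(H^2_d)$ exactly when $g\in\HC_n(D_{\frac{k-1}{2}}(\D))$, it suffices to show that an outer $f\in A^\infty(\D)$ lies in $\HC_n(D_{\frac{k-1}{2}}(\D))$ for every $n\ge\frac{k-1}{4}$, and by the inclusions \eqref{CnInclusions} it is enough to treat the smallest such integer, $n_0=\lceil\frac{k-1}{4}\rceil$. Writing $\alpha=\frac{k-1}{2}$, the inequality $n_0\ge\alpha/2$ lets one realize $D_\alpha(\D)=B^{n_0}_\om$ (with equivalent norms) for a suitable admissible radial measure $\om$ on $\overline{\D}$, so the goal becomes $f^{n_0}\in[f^{n_0+1}]$ in $B^{n_0}_\om$.

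Next I would set up the natural approximants. Since $f$ is outer it has no zeros in $\D$, so $f_r(z)=f(rz)$ has no zeros on $\overline{\D}$ and hence $1/f_r\in A^\infty(\D)\subseteq\Mult(B^{n_0}_\om)$ for each $0<r<1$; thus $f^{n_0+1}/f_r\in[f^{n_0+1}]$, and $f^{n_0+1}/f_r\to f^{n_0}$ pointwise as $r\to1$. If $\sup_{0<r<1}\|f/f_r\|_\infty<\infty$ — which holds when $f$ has no zeros on $\overline{\D}$, and more generally, via a one-variable {\L}ojasiewicz estimate as in Lemma~\ref{LojaCoro}, when every boundary zero of $f$ has finite order — then Lemma~\ref{LemRatioEstimate}, applied on slices (harmless since $d=1$), bounds $\{f^{n_0+1}/f_r\}$ in $B^{n_0}_\om$, and the weak-compactness argument from the proof of Theorem~\ref{f&f_r} yields $f^{n_0}\in[f^{n_0+1}]$.

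The decisive case is when $f$ has flat boundary zeros: there $\|f/f_r\|_\infty$ may blow up and the approximants above need not remain bounded in $B^{n_0}_\om$, so one must fall back on one-variable theory. For $\alpha=\frac{k-1}{2}>1$ (that is, $k\ge4$), $D_\alpha(\D)$ embeds continuously into the disc algebra — indeed into an analytic Lipschitz class — and only finitely many boundary point-derivative functionals $f\mapsto f^{(j)}(\zeta)$ are bounded on it, namely those with $2j+1<\alpha$, at most $n_0$ of them; consequently the closed invariant subspace generated by an outer function in $A^\infty(\D)$ is governed by a finite family of boundary vanishing conditions that no longer changes once $n\ge n_0$, so $[f^{n_0}]=[f^{n_0+1}]$. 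For $\alpha\le1$ ($k\le3$), capacity effects must be used instead: when $k=1$, $D_0(\D)=H^2$ and $f$ is cyclic; when $k=3$, $D_1(\D)=D$ and $\HC_1(D)$ is precisely the set of outer multipliers of $D$ by \cite[Theorem~4.3]{RiSuJOT}, so $f\in\HC_1(D)\subseteq\HC_{n_0}(D)$; and the intermediate case $k=2$ follows from the analogous statement for $D_{1/2}(\D)$.

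The hard part is exactly this last case: proving, in one variable, that $[f^n]=[f^{n+1}]$ for $n\ge n_0$ when $f$ is an outer $A^\infty$ function whose boundary zeros may be flat. The role of the reduction in the first paragraph is to make the one-variable inputs (\cite{RiSuJOT} for the Dirichlet space, and the ideal structure of the analytic Lipschitz/Besov algebras $D_\alpha(\D)$, $\alpha>1$) usable in the Drury-Arveson setting; everything else is the routine weak-limit mechanism already employed for Theorems~\ref{PolyCn} and \ref{f&f_r}.
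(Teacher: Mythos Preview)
Your reduction via Lemma~\ref{Lem:Tkd}(b) to the one-variable problem $f\in\HC_n(D_{(k-1)/2}(\D))$ is correct and matches the paper. After that, however, there is a genuine gap. The approximants $f^{n_0+1}/f_r$ are, as you yourself note, a dead end once $f$ has flat boundary zeros, so everything rests on your case analysis. There the $k=2$ case is a bare assertion with no argument or reference, and for $k\ge4$ your claim that the invariant subspace generated by an outer $A^\infty$ function ``is governed by a finite family of boundary vanishing conditions'' is not justified: knowing which point-derivative functionals $g\mapsto g^{(j)}(\zeta)$ are bounded on $D_\alpha(\D)$ does not by itself imply that they determine the closed multiplier-invariant subspaces containing outer functions. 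That is a Korenblum-type theorem, and you have neither proved nor cited it for the half-integer indices $\alpha=(k-1)/2$ that actually arise.

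The paper avoids all of this with one extra move you are missing. Since $2n\ge\frac{k-1}{2}$, the embedding $D_{2n}(\D)\hookrightarrow D_{(k-1)/2}(\D)$ is continuous, so it suffices to show $f\in\HC_n(D_{2n}(\D))$; convergence $p_jf^{n+1}\to f^n$ in the smaller space transfers automatically to the larger one. For the spaces $D_{2n}(\D)=\{g:g^{(n)}\in H^2\}$ Korenblum~\cite{KorenblumH2n} classified all invariant subspaces containing an outer function as sets $I(E_0,\dots,E_{n-1})=\{g:g^{(j)}|_{E_j}=0,\ j=0,\dots,n-1\}$, and with $E=Z(f)\cap\partial\D$ one gets $[f^n]=[f^{n+1}]=I(E,\dots,E)$. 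This single step handles every $k$ uniformly, with no separate treatment of $D_{1/2}$, the Dirichlet space, or half-integer indices.
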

\begin{proof}
Let $1\le k \le d$, $n\ge \frac{k-1}{4}$, and let $f\in A^\infty(\D)$ be outer. By Lemma \ref{Lem:Tkd} it suffices to show that $f\in \HC_n(D_{(k-1)/2}(\D))$. The choice of $n$ implies that  $D_{2n}(\D)\subseteq D_{(k-1)/2}$ with $\|g\|_{D_{(k-1)/2}}\le \|g\|_{D_{2n}(\D)}$ for all $g\in D_{2n}(\D)$. Hence it will be enough to show that $f\in \HC_n(D_{2n}(\D))$. For the spaces $D_{2n}(\D)$ Korenblum determined the invariant subspaces, \cite{KorenblumH2n}. Indeed, each  invariant subspace that contains an outer function is of the form
$$I(E_0,E_1, \dots, E_{n-1})=\{f\in D_{2n}(\D): f^{(j)}(z)=0 \text{ on }E_j \text{ for }j=0,\dots n-1\},$$
where    $\partial \D \supseteq E_0\supseteq E_1\supseteq \dots \supseteq E_{n-1}$ are compact sets such that $E_0$ is a Carleson set and  $E_0\setminus E_{n-1}$ is discrete. If $E=\{z\in \partial \D: f(z)=0\}$, then it follows that $[f^n]=[f^{n+1}]=I(E,\dots,E)$. Hence $f\in \HC_n(D_{2n}(\D))$.\end{proof}

\bibliography{CyclicBib}

\end{document}